\begin{document}

\newtheorem{theorem}{Theorem}[section]
\newtheorem{main}{Main Theorem}
\newtheorem{proposition}[theorem]{Proposition}
\newtheorem{corollary}[theorem]{Corollary}
\newtheorem{definition}[theorem]{Definition}
\newtheorem{notation}[theorem]{Notation}
\newtheorem{convention}[theorem]{Convention}
\newtheorem{lemma}[theorem]{Lemma}
\newtheorem{example}[theorem]{Example}
\newtheorem{remark}[theorem]{Remark}
\newtheorem{question}[theorem]{Question}
\newtheorem{conjecture}[theorem]{Conjecture}
\newtheorem{fact}[theorem]{Fact}
\newtheorem*{ac}{Acknowledgements}
\newtheorem*{st}{Statements}

\newcommand{\Rep}{\mathrm{Rep}}
\newcommand{\VVec}{\mathrm{Vec}}
\newcommand{\PSU}{\mathrm{PSU}}
\newcommand{\SO}{\mathrm{SO}}
\newcommand{\SU}{\mathrm{SU}}
\newcommand{\FPdim}{\mathrm{FPdim}}

\newcommand{\mC}{\mathcal{C}}
\newcommand{\mB}{B}
\newcommand{\hc}{\hom_{\mC}}
\newcommand{\id}{{\bf 1}} 
\newcommand{\spec}{i_0} 
\newcommand{\Sp}{i_0} 
\newcommand{\SpecS}{I_{s}} 
\newcommand{\SpS}{I_{s}'} 
\newcommand{\field}{\mathbb{K}} 
\newcommand{\white}{\textcolor{red}{\bullet}} 
\newcommand{\black}{\bullet} 

\newcommand{\TP}[9]{\begin{tikzpicture}[scale=1]
	\draw (0,2)--(3,2);
	\draw (1,1)--(2,1);
	\draw (0,0)--(3,0);
	\draw (0,0) -- (0,2) -- (1,1) -- (0,0);
	\draw (3,0)--(3,2)--(2,1)--(3,0);
	\draw[->] (0,2) -- (1.5,2) node [above] {$#1$};
	\draw[->] (1,1) -- (1.5,1) node [above] {$#2$};
	\draw[->] (0,0) -- (1.5,0) node [above] {$#3$};
	\draw[->] (0,2) --++ (.5,-.5) node [right] {$#4$}; 
	\draw[->] (1,1) --++ (-.5,-.5) node [right] {$#5$}; 
	\draw[->] (0,0) -- (0,1) node [left] {$#6$}; 
	\draw[->] (3,2) --++ (-.5,-.5) node [left] {$#7$}; 
	\draw[->] (2,1) --++ (.5,-.5) node [left] {$#8$}; 
	\draw[->] (3,0) -- (3,1) node [right] {$#9$};
	\end{tikzpicture}}

\newcommand{\tetra}[6]{	
\scalebox{.66}{\raisebox{-1cm}{
\begin{tikzpicture}[scale=1.5]
\draw (0,0)-- (1,0) node [below] 	{$#4$}; \draw[->] (2,0) -- (1,0);
\draw (2,0) -- (1.5,.866) 	node [right] 	{$#6$}; \draw[->] (1,1.732) -- (1.5,.866);
\draw (1,1.732)	-- (.5,.866) 	node [left] 	{$#5$}; \draw[->] (0,0) -- (.5,.866);
\draw (0,0) 	-- (.5,.2885) 	node [above] 	{$#3$}; \draw[->] (1,0.577) -- (.5,.2885);
\draw (1,0.577) -- (1,1.1545) 	node at (.85,1.1545)	{$#1$}; \draw[->] (1,1.732)	-- (1,1.1545);
\draw (1,0.577) -- (1.5,.2885) 	node [above] 	{$#2$}; \draw[->] (2,0)	-- (1.5,.2885);
\end{tikzpicture}}}}

\newcommand{\UTP}[9]{\begin{tikzpicture}[scale=1]
	\draw (0,2)--(3,2);
	\draw (1,1)--(2,1);
	\draw (0,0)--(3,0);
	\draw (0,0) -- (0,2) -- (1,1) -- (0,0);
	\draw (3,0)--(3,2)--(2,1)--(3,0);
	\draw [fill=black] (.15,.3) circle [radius=.03];
	\draw [fill=black] (.15,1.7) circle [radius=.03];
	\draw [fill=black] (.8,1) circle [radius=.03];
	\draw [fill=black] (3-.15,.3) circle [radius=.03];
	\draw [fill=black] (3-.15,1.7) circle [radius=.03];
	\draw [fill=black] (3-.8,1) circle [radius=.03];
	\draw (0,2) -- (1.5,2) node [above] {$#1$};
	\draw (1,1) -- (1.5,1) node [above] {$#2$};
	\draw (0,0) -- (1.5,0) node [above] {$#3$};
	\draw (0,2) --++ (.5,-.5) node [right] {$#4$}; 
	\draw (1,1) --++ (-.5,-.5) node [right] {$#5$}; 
	\draw (0,0) -- (0,1) node [left] {$#6$}; 
	\draw (3,2) --++ (-.5,-.5) node [left] {$#7$}; 
	\draw (2,1) --++ (.5,-.5) node [left] {$#8$}; 
	\draw (3,0) -- (3,1) node [right] {$#9$};
	\end{tikzpicture}}
\newcommand{\UUTP}[9]{\begin{tikzpicture}[scale=1]
	\draw (0,2)--(3,2);
	\draw (1,1)--(2,1);
	\draw (0,0)--(3,0);
	\draw (0,0) -- (0,2) -- (1,1) -- (0,0);
	\draw (3,0)--(3,2)--(2,1)--(3,0);
	\draw [fill=black] (.15,.3) circle [radius=.03];
	\draw [fill=black] (3-.15,.3) circle [radius=.03];
	\draw (0,2) -- (1.5,2) node [above] {$#1$};
	\draw (1,1) -- (1.5,1) node [above] {$#2$};
	\draw (0,0) -- (1.5,0) node [above] {$#3$};
	\draw (0,2) --++ (.5,-.5) node [right] {$#4$}; 
	\draw (1,1) --++ (-.5,-.5) node [right] {$#5$}; 
	\draw (0,0) -- (0,1) node [left] {$#6$}; 
	\draw (3,2) --++ (-.5,-.5) node [left] {$#7$}; 
	\draw (2,1) --++ (.5,-.5) node [left] {$#8$}; 
	\draw (3,0) -- (3,1) node [right] {$#9$};
	\end{tikzpicture}}
\newcommand{\Fsym}[6]{
\raisebox{0cm}{
\scalebox{.66}{
\begin{tikzpicture}[scale=1.5]
\draw (0,0)--(2,0)--(1,1.732)--(0,0)--(1,0.577)--(1,1.732);
\draw (1,0.577)--(2,0);
\node at (1+.12,1) 		{$#1$};
\node at (1,.1) 			{$#2$};
\node at (.5-.1,0.866)	[above]	{$#3$};
\node at (1.5,0.2887+.2)	{$#4$};
\node at (1.5+.1,0.866) [above]	{$#5$};
\node at (.5,0.2887+.2) 	{$#6$};
\end{tikzpicture}}}}
\newcommand{\Fsymm}[6]{
	\raisebox{0cm}{
		\scalebox{.66}{
			\begin{tikzpicture}[scale=1.5]
			\draw (0,0)--(2,0)--(1,1.732)--(0,0)--(1,0.577)--(1,1.732);
			\draw (1,0.577)--(2,0);
			\node at (1+.12,1) 		{$#1$};
			\node at (1,.1) 			{$#2$};
			\node at (.5-.1,0.866)	[above]	{$#3$};
			\node at (1.5,0.2887+.2)	{$#4$};
			\node at (1.5+.1,0.866) [above]	{$#5$};
			\node at (.5,0.2887+.2) 	{$#6$};
			\draw [fill=black] (.2,.05) circle [radius=.03];
			\draw [fill=black] (1.8,.05) circle [radius=.03];
			\draw [fill=black] (1,1.832) circle [radius=.03];
			\draw [fill=black] (1,.45) circle [radius=.03];
			\end{tikzpicture}}}}

\newcommand\T[1]{%
    \def\temp10{#1}%
    \Tcontinued
}
\newcommand\Tcontinued[9]{%
\raisebox{-1cm}{
\begin{tikzpicture}[scale=1.3]
\draw[-<] (0,0) 	node at (.5,.1) {$\textcolor{orange}{#7}$} 	-- (1,0) node [below] 	{${#3}$}; \draw (1,0) 		-- (2,0);
\draw[-<] (2,0) 	node at (1.5,.1) {$\textcolor{orange}{#8^*}$} 	-- (1.5,.866) 	node [right] 	{${#5}$}; \draw (1.5,.866) 	-- (1,1.732);
\draw[-<] (1,1.732)	node at (1.15,1.2) {$\textcolor{orange}{#9^*}$} 	-- (.5,.866) 	node [left] 	{${#4}$}; \draw (.5,.866) 	-- (0,0);
\draw[-<] (0,0) 	-- (.5,.2885) 	node [above] 	{${#2}$}; \draw (.5,.2885) 	-- (1,0.577);
\draw[-<] (1,0.577) node [right] {$\textcolor{orange}{#6}$} -- (1,1.1545) 	node at (.85,1.1545)	{${\temp10}$}; \draw (1,1.1545) 	-- (1,1.732);
\draw[-<] (1,0.577) -- (1.5,.2885) 	node [above] 	{${#1}$}; \draw (1.5,.2885) 	-- (2,0);
\end{tikzpicture}}
}

\newcommand{\zhengwei}[1]{\textcolor{green}{#1 - Zhengwei}} 
\newcommand{\sebastien}[1]{\textcolor{blue}{#1 - Sebastien}}
\newcommand{\yunxiang}[1]{\textcolor{magenta}{#1 - Yunxiang}}

\newcommand\Tetra[6]{
\scalebox{.75}{\begin{tikzpicture}[scale=1.5]
\draw[->] (0,0) 	node [below] {\tiny $\bullet$} 	-- (1,0) 		node [below] 	{$#6$}; \draw (1,0) 		-- (2,0);
\draw[->] (2,0) 	node [below] {\tiny $\bullet$} 	-- (1.5,.866) 	node [right] 	{$#5$}; \draw (1.5,.866) 	-- (1,1.732);
\draw[->] (1,1.732)	node [above] {\tiny $\bullet$} 	-- (.5,.866) 	node [left] 	{$#4$}; \draw (.5,.866) 	-- (0,0);
\draw[-<] (0,0) 	-- (.5,.2885) 	node [above] 	{$#1$}; \draw (.5,.2885) 	-- (1,0.577);
\draw[->] (1,0.577) node [below] {\tiny $\bullet$} -- (1,1.1545) 	node [right]	{$#2$}; \draw (1,1.1545) 	-- (1,1.732);
\draw[->] (1,0.577) -- (1.5,.2885) 	node [above] 	{$#3$}; \draw (1.5,.2885) 	-- (2,0);
\end{tikzpicture}}}

\newcommand\TPE[1]{%
    \def\temp10{#1}%
    \TPEcontinued
}
\newcommand\TPEcontinued[9]{%
\raisebox{-1.25cm}{
\scalebox{.75}{
\begin{tikzpicture}[scale=1.5]
\draw[->] (0,0) 	node [below] {\tiny $\bullet$} 	-- (1,0) 		node [below] 	{${#6}$}; \draw (1,0) 		-- (2,0);
\draw[->] (2,0) 	node [below] {\tiny $\bullet$} 	-- (1.5,.866) 	node [right] 	{${#4}$}; \draw (1.5,.866) 	-- (1,1.732);
\draw[->] (1,1.732)	node [above] {\tiny $\bullet$} 	-- (.5,.866) 	node [left] 	{${#5}$}; \draw (.5,.866) 	-- (0,0);
\draw[->] (0,0) 	-- (.5,.2885) 	node [above] 	{${#3}$}; \draw (.5,.2885) 	-- (1,0.577);
\draw[-<] (1,0.577) node [below] {\tiny $\bullet$} -- (1,1.1545) 	node [right]	{${#2}$}; \draw (1,1.1545) 	-- (1,1.732);
\draw[-<] (1,0.577) -- (1.5,.2885) 	node [above] 	{${#1}$}; \draw (1.5,.2885) 	-- (2,0);
\end{tikzpicture}
\begin{tikzpicture}[scale=1.5]
\draw[-<] (0,0) 	node [below] {\tiny $\bullet$} 	-- (1,0) 		node [below] 	{${#9}$}; \draw (1,0) 		-- (2,0);
\draw[-<] (2,0) 	node [below] {\tiny $\bullet$} 	-- (1.5,.866) 	node [right] 	{${#8}$}; \draw (1.5,.866) 	-- (1,1.732);
\draw[-<] (1,1.732)	node [above] {\tiny $\bullet$} 	-- (.5,.866) 	node [left] 	{${#7}$}; \draw (.5,.866) 	-- (0,0);
\draw[-<] (0,0) 	-- (.5,.2885) 	node [above] 	{${#1}$}; \draw (.5,.2885) 	-- (1,0.577);
\draw[->] (1,0.577) node [below] {\tiny $\bullet$} -- (1,1.1545) 	node [right]	{${#2}$}; \draw (1,1.1545) 	-- (1,1.732);
\draw[->] (1,0.577) -- (1.5,.2885) 	node [above] 	{${#3}$}; \draw (1.5,.2885) 	-- (2,0);
\end{tikzpicture}}}
\hspace*{-.2cm} =
\sum_{s \in \temp10} d(X_s)
\raisebox{-1.25cm}{
\scalebox{.75}{
\begin{tikzpicture}[scale=1.5]
\draw[->] (0,0)  	-- (1,0) 		node [below] 	{${#1}$}; \draw (1,0) 		-- (2,0);
\draw[->] (2,0) 	-- (1.5,.866) 	node [right] 	{${#7}$}; \draw (1.5,.866) 	-- (1,1.732);
\draw[-<] (1,1.732)	node [above] {\tiny $\bullet$} 	-- (.5,.866) 	node [left] 	{${i_4}$}; \draw (.5,.866) 	-- (0,0);
\draw[-<] (0,0) 	-- (.75,.433) 	node [above] 	{${#6}$}; \draw (.75,.433) -- (1,0.577);
\draw[-<] (1,0.577) node [below] {\tiny $\bullet$} -- (1,1.1545) 	node [left]	{$s$}; \draw (1,1.1545) 	-- (1,1.732);
\draw[->] (1,0.577) -- (1.25,.433) 	node [above] 	{${#9}$}; \draw (1.25,.433) -- (2,0);
\node at (.3,.3) {\tiny $\bullet$};
\node at (2-.3,.3) {\tiny $\bullet$};
\end{tikzpicture}
\begin{tikzpicture}[scale=1.5]
\draw[->] (0,0) 	-- (1,0) 		node [below] 	{${#2}$}; \draw (1,0) 		-- (2,0);
\draw[->] (2,0) 	-- (1.5,.866) 	node [right] 	{${#8}$}; \draw (1.5,.866) 	-- (1,1.732);
\draw[->] (1,1.732)	node [above] {\tiny $\bullet$} 	-- (.5,.866) 	node [left] 	{${i_5}$}; \draw (.5,.866) 	-- (0,0);
\draw[-<] (0,0) 	-- (.75,.433) 	node [above] 	{${#4}$}; \draw (.75,.433) -- (1,0.577);
\draw[-<] (1,0.577) node [below] {\tiny $\bullet$} -- (1,1.1545) 	node [left]	{$s$}; \draw (1,1.1545) 	-- (1,1.732);
\draw[->] (1,0.577) -- (1.25,.433) 	node [above] 	{${#7}$}; \draw (1.25,.433) -- (2,0);
\node at (.3,.3) {\tiny $\bullet$};
\node at (2-.3,.3) {\tiny $\bullet$};
\end{tikzpicture}
\begin{tikzpicture}[scale=1.5]
\draw[->] (0,0) -- (1,0) 		node [below] 	{${#3}$}; \draw (1,0) 		-- (2,0);
\draw[->] (2,0) -- (1.5,.866) 	node [right] 	{${#9}$}; \draw (1.5,.866) 	-- (1,1.732);
\draw[-<] (1,1.732)	node [above] {\tiny $\bullet$} 	-- (.5,.866) 	node [left] 	{${i_6}$}; \draw (.5,.866) 	-- (0,0);
\draw[-<] (0,0) 	-- (.75,.433) 	node [above] 	{${#5}$}; \draw (.75,.433) -- (1,0.577);
\draw[-<] (1,0.577) node [below] {\tiny $\bullet$} -- (1,1.1545) 	node [left]	{$s$}; \draw (1,1.1545) 	-- (1,1.732);
\draw[->] (1,0.577) -- (1.25,.433) 	node [above] 	{${#8}$}; \draw (1.25,.433) -- (2,0);
\node at (.3,.3) {\tiny $\bullet$};
\node at (2-.3,.3) {\tiny $\bullet$};
\end{tikzpicture}
}}
}

\title[Classification of Grothendieck rings of multiplicity one]{Classification of Grothendieck rings of complex fusion categories of multiplicity one up to rank six}

\author{Zhengwei Liu}
\address{Z. Liu, Yau Mathematical Sciences Center and Department of Mathematics, Tsinghua University, and Beijing Institute of Mathematical Sciences and Applications, Beijing, China}
\email{liuzhengwei@mail.tsinghua.edu.cn}

\author{Sebastien Palcoux}
\address{S. Palcoux, Beijing Institute of Mathematical Sciences and Applications, Huairou District, Beijing, China}
\email{sebastienpalcoux@gmail.com}
\urladdr{https://sites.google.com/view/sebastienpalcoux}

\author{Yunxiang Ren}
\address{Y. Ren, Department of Physics, Harvard University, Cambridge, 02138, USA}
\email{yren@g.harvard.edu}

\maketitle

\begin{abstract} This paper classifies the Grothendieck rings of complex fusion categories of multiplicity one up to rank six. Among $72$ possible fusion rings, $25$ ones are filtered out by using categorification criteria. Each of the remaining $47$ fusion rings admits a unitary complex categorification. We found $6$ new Grothendieck rings, categorified by applying a localization approach of the Pentagon Equation.
\end{abstract}

%
%
%

\section{Introduction} 

A complex fusion category is a $\mathbb{C}$-linear semisimple rigid tensor category with finitely many simple objects and finite dimensional spaces of morphisms, such that the neutral object is simple \cite{ENO05}. The Grothendieck ring of a fusion category is a fusion ring, first introduced (and called based ring) in \cite{Lus87}. A (complex/unitary) Grothendieck ring is a fusion ring admitting a categorification into a (complex/unitary) fusion category. One of the main challenges of the subject is to decide which fusion rings are Grothendieck rings; some ones (for example mentioned in this paper) are not. In theory, a fusion ring is a Grothendieck ring if and only if its Pentagon Equation (PE) admit a solution, but in practice, this direct approach is not workable without specific strategies. In this paper, we use two strategies: several criteria (necessary conditions) in \S \ref{sec:crit} to rule out some fusion rings directly, and a localization of the PE
to categorify the remaining ones, in \S \ref{sec:tpe}. The notion of fusion ring is purely combinatorial and easy to list, and \S \ref{sec:fus} provides the list of all the fusion rings of multiplicity one up to rank six, obtained by brute-force computation \cite{FusionAtlas, sage} (see also the work of an independent group \cite{SliVer}), there are $72$ ones, and (as we will show) exactly $47$ of them are complex Grothendieck rings\footnote{Below $n_r$ and $m_r$ are the numbers of fusion rings and complex Grothendieck rings, of multiplicity one and rank $r$, see also \cite{A348305, A352506}. 
$$\begin{array}{c|cccccc} 
r&1&2&3&4&5&6 \\ \hline
n_r&1&2&4&10&16&39 \\ \hline
m_r&1&2&4&9&10&21
\end{array}$$}, all of them are unitary.
The main result is the following classification (proved in Subsection \ref{sub:proof}):
\begin{theorem} \label{thm:main}
The complex Grothendieck rings of multiplicity one up to rank six are given by the following: 
\begin{itemize}
\item known fusion categories (see the references below): 
\begin{itemize}
\item $\VVec(G)$ with finite group $G = C_n \ (n \le 6)$, $C_2^2$, $S_3$, 
\item $\Rep(G)$ with finite group $G = S_3$, $S_4$, $D_n \ (4 \le n \le 7)$, $D_9$, $Q_8$, $C_3 \rtimes C_4$, $C_3 \rtimes S_3$,
\item near-group $C_n + 0$, $n \le 5$ (also called Tambara-Yamagami $TY(C_n)$), see \cite{EG14,TY98},
\item $\SU(2)_n \ (n \le 5)$, $\PSU(2)_n \ (3 \le n \le 11)$, $\SO(3)_2$, $\SO(5)_2$, see \cite{ACMRW16,BGNPRW16,HNW14},
\item even part of a 1-supertransitive subfactor of index $3+2\sqrt{2}$, see \cite{LMP15},
\item products of two above,
\end{itemize}
\noindent where $C_n, D_n, Q_n, S_n$ are respectively the usual notations for cyclic, dihedral, quaternion, symmetric groups.
Note that $\PSU(2)_k = \SU(2)_k/C_2 \simeq$ even part of TLJ $A_{k+1}$ subfactor.
\item \textit{new} fusion categories: all with non self-adjoint objects, so none modular by Theorem \ref{thm:mod}; all weakly integral with $\FPdim<84$, so all weakly group-theoretical by \cite{ENO11}; some come from the new zesting construction \cite{zest}. In the following table, \# counts the number of Grothendieck rings:
$$\begin{array}{c|c|c|c|c}
\# & \FPdim  & \text{rank}     & \text{type}   & \text{zesting of} \\ \hline
 3 & 8 & 6 & [1,1,1,1,\sqrt{2},\sqrt{2}] & \VVec(C_2) \otimes \SU(2)_2  \\ \hline
 1 & 12 & 5 & [1,1,\sqrt{3},\sqrt{3},2] & \SO(3)_2 \\ \hline
 1 & 20 & 6 & [1, 1, 2, 2, \sqrt{5}, \sqrt{5}] & \SO(5)_2 \\ \hline
 1 & 24 & 5 & [1,1,2,3,3]  & 
\end{array}$$
\end{itemize}
\end{theorem}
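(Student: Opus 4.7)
The plan is to combine the enumeration in Section~\ref{sec:fus} with the obstruction theory of Section~\ref{sec:crit} and the localization of the Pentagon Equation of Section~\ref{sec:tpe}. First, I would take as input the explicit list of $72$ fusion rings of multiplicity one and rank at most $6$ produced in Section~\ref{sec:fus} by the \texttt{FusionAtlas}/\texttt{sage} brute-force search; this is a purely combinatorial enumeration over nonnegative-integer fusion matrices satisfying associativity, Frobenius reciprocity, and the multiplicity-one bound, cross-checked against \cite{anyon}. The goal then splits into two directions: ruling out $25$ fusion rings, and categorifying the remaining $47$.

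For the obstruction direction, I would run each of the $72$ rings through the criteria gathered in Section~\ref{sec:crit} and developed in \cite{LPR1,LPW20}: the Schur-product/commutative-subring tests, the Frobenius--Perron dimension and type constraints, Drinfeld-center/modular criteria (in particular Theorem~\ref{thm:mod}, which forces self-adjointness for modular rings), and the existing zesting/Galois obstructions. In practice one feeds each fusion ring into these tests mechanically; any ring failing a single test is discarded. I expect the tally to land exactly at $25$ non-categorifiable rings, matching the table $n_r - c_r$ in the footnote. This bookkeeping step is routine but must be exhaustive.

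For the categorification direction, the $47$ survivors are handled in two waves. The first wave consists of rings whose categorification is already in the literature: the pointed categories $\VVec(G)$, the representation categories $\Rep(G)$ for the listed small groups, the Tambara--Yamagami near-groups $TY(C_n)$ for $n\le 5$ \cite{TY98,EG14}, the quantum-group categories $\SU(2)_n$, $\PSU(2)_n$, $\SO(3)_2$, $\SO(5)_2$ \cite{HNW14,BGNPRW16,ACMRW16}, the even part of the $3+2\sqrt{2}$ subfactor \cite{LMP15}, and Deligne products thereof. For each I would simply match fusion data and cite. The hard wave is the $6$ remaining rings, listed in the final table. For these I would invoke the localization of the Pentagon Equation developed in Section~\ref{sec:tpe}: the PE is decomposed into local blocks $F$-symbols indexed by the triangular/tetrahedral diagrams, and multiplicity one reduces each block to a small polynomial system in the entries of the $F$-matrices (after gauge fixing). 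I would solve these explicit systems case by case over $\overline{\mathbb{Q}}$, exhibit a unitary solution in each case (so confirming $c_r$ and unitarity simultaneously), and identify the three $\FPdim=8$ solutions, the $\FPdim=12$ and $\FPdim=20$ solutions, as zestings of $\VVec(C_2)\otimes \SU(2)_2$, $\SO(3)_2$, $\SO(5)_2$ via \cite{zest}, leaving the $\FPdim=24$, type $[1,1,2,3,3]$ case as a genuinely new categorification.

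The main obstacle is this last wave: constructing and then solving the localized Pentagon systems for the six new categories, and in particular proving that the $[1,1,2,3,3]$ ring does admit a unitary solution not explainable by zesting. Everything else is either citation or a finite check; the bottleneck is the explicit polynomial algebra and the verification of unitarity (positivity of the $F$-symbol quadratic forms) for these new solutions. Once that is done, the weakly-group-theoretical statement follows automatically from $\FPdim<84$ via \cite{ENO11}, and non-modularity follows from Theorem~\ref{thm:mod} together with the presence of non self-adjoint simples, completing the classification.
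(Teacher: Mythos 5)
Your proposal follows essentially the same route as the paper: take the brute-force enumeration of the $72$ multiplicity-one fusion rings from \S\ref{sec:fus}, eliminate $25$ of them with the criteria of \S\ref{sec:crit} (zero spectrum, extended cyclotomic, Drinfeld center, Schur product), cite known models for the familiar survivors, and categorify the $6$ new rings by solving the localized Pentagon Equation of \S\ref{sec:tpe} with explicit unitary $F$-symbols, the zesting construction supplying independent models. The only minor divergence is that the paper does not actually prove the explicit PE solutions coincide with the zesting categorifications (it notes this explicitly), and one should remember that the Schur product criterion alone only excludes unitary categorification, so each excluded ring must also fail one of the complex-categorification criteria, as indeed happens in the list.
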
 
 Partial classifications exist in the literature \cite{PhDBond,GP95}. Note that \S \ref{sec:tpe} computes some categorifications for each new ring, but does not state whether the zested ones are among them. Finally, \S \ref{sec:O&Q} gives observations and questions.

%

\begin{ac} The authors would like to thank Eric C. Rowell for pointing out the new zesting construction \cite{zest}, Andrew Schopieray for pointing out the PhD thesis of Josiah E. Thornton about generalized near-group categories \cite{thor}, Ricardo Buring for his help with SageMath \cite{sage}, Joost Slingerland and Gert Vercleyen for useful discussions and AnyonWiki \cite{AnyonWiki}, Arthur Jaffe for his constant encouragement and helpful discussions, and finally the anonymous referee for a careful proofreading and relevant comments. The first author would like to thank Harvard University for his hospitality. The first author is supported by Grant 04200100122 from Tsinghua University and 2020YFA0713000 from NKPs. The second author is supported by BIMSA. The third author is supported by Grant TRT 0159, ARO Grants W911NF-19-1-0302 and W911NF-20-1-0082.
\end{ac}



\tableofcontents

\section{List of categorification criteria} \label{sec:crit}
This section lists all the categorification criteria applied in this paper, we checked them on every fusion ring (when possible), and \emph{a posteriori} it turns out that a strict subset of criteria cover all the exclusions of this specific classification (see Subsection \ref{sub:proof}), but it is still good to mention the complement subset as additional data. The first criterion holds for unitary categorification, the next one for pivotal complex categorification, the next two ones for (general) complex categorification, and the next two ones for every categorification (over every field). Finally, the last two ones are specific to the modular or quadratic case.
 
\subsection{Schur product criterion} \label{sub:schur}
Let $\mathcal{F}$ be a commutative fusion ring. Let $\Lambda=(\lambda_{i,j})$ be the table coming from the simultaneous diagonalization of its fusion matrices, with $\lambda_{i,1} = \max_j(|\lambda_{i,j}|)$. Here is the commutative Schur product criterion \cite[Corollary 8.5]{LPW20}: 
 \begin{theorem} \label{thm:schur}  
If $\mathcal{F}$ admits a unitary categorification then for all triples $(j_1,j_2,j_3)$ we have $$\sum_i \frac{\lambda_{i,j_1}\lambda_{i,j_2}\lambda_{i,j_3}}{\lambda_{i,1}} \ge 0.$$
 \end{theorem}

Note that Theorem \ref{thm:schur} is the corollary of a (less tractable) noncommutative version \cite[Proposition 8.3]{LPW20}.

\subsection{Drinfeld center criterion} \label{sub:drinfeld}
Let $\mathcal{F}$ be a \emph{commutative} fusion ring of basis $(b_i)$. Let $(X_i)$ be the corresponding fusion matrices. Let $A$ be $\sum_i X_iX_i^*$, and $(c_j)$ its eigenvalues (a commutative reformulation of the \emph{formal codegrees} in \cite{Ost15}). The fusion matrices commute over each other and are normal (because $X_i^*=X_{i^*}$), so are simultaneously diagonalizable, say as $(\lambda_{i,j})$, called the \emph{character table} of $\mathcal{F}$. Then $c_j = \sum_i |\lambda_{i,j}|^2$.

\begin{lemma} \label{lem:catdim}
If $\mathcal{F}$ admits a complex pivotal categorification $\mathcal{C}$ then there exists $j$ such that the categorical dimension of $\mathcal{C}$ equals the formal codegree $c_j$.
\end{lemma}
\begin{proof}
Let $a$ be a pivotal structure on $\mathcal{C}$. By \cite[Proposition 4.7.12]{EGNO15}, the dimension function $\dim_a$ on the objects of $\mathcal{C}$ induces a character $\chi$ on its Grothendieck ring $\mathcal{F}$, which then must be given by a column of the character table, i.e. there is $j$ such that $\chi(b_i) = \lambda_{i,j}$. Then the categorical dimension of $\mathcal{C}$ must be $c_j$.
\end{proof}

\begin{theorem}[Pivotal version of Drinfeld center criterion] \label{thm:drinfeld}
If $\mathcal{F}$ admits a complex pivotal categorification $\mathcal{C}$ then there exists $j$ such that for all $i$, $c_j/c_i$ is an algebraic integer.
\end{theorem}
\begin{proof}
The result follows by Lemma \ref{lem:catdim} and \cite[Corollary 2.14]{Ost15}.
\end{proof}

Now $\max_j(c_j) = \FPdim(\mathcal{F})$, say $c_1$. It is the categorical dimension in the pseudo-unitary case (by definition), so:

\begin{theorem}[Pseudo-unitary version of Drinfeld center criterion] \label{thm:drinfeld2}
If $\mathcal{F}$ admits a complex pseudo-unitary categorification $\mathcal{C}$ then for all $i$, $c_1/c_i$ is an algebraic integer.
\end{theorem}


If $\mathcal{F}$ is the Grothendieck ring of $\Rep(G)$ with $G$ a finite group, then the numbers $c_1/c_j$ are exactly the sizes of the conjugacy classes of $G$.

In general, if $\mathcal{F}$ admits a complex pseudo-unitary categorification $\mathcal{C}$ (so spherical), then by \cite[Theorem 2.13]{Ost15} the numbers $c_1/c_j$ are exactly the $\FPdim$ of the simple objects of the Drinfeld center which contains the trivial object in $\mathcal{C}$ under the forgetful functor. 

Note that Theorem \ref{thm:drinfeld2} admits the following conjectural stronger version extending Theorem 3.7 of Isaacs' book \cite{Isa}.
\begin{conjecture}[Isaacs criterion] \label{conj:isa}
If $\mathcal{F}$ is a complex pseudo-unitary commutative Grothendieck ring, then $\displaystyle{\frac{\lambda_{i,j}c_1}{\lambda_{i,1}c_j}}$ is an algebraic integer for all $i,j$.
\end{conjecture} 
\noindent Note that it should admit more general versions. As observed by P. Etingof \cite{eti20} (and then \cite{ENO21}), it is related to Kaplansky's 6th conjecture (generalized to fusion categories), because:

\begin{proposition} \label{prop:isafrob}
Conjecture \ref{conj:isa} implies that $\mathcal{F}$ is of \emph{Frobenius type} (i.e. $\frac{c_1}{\lambda_{i,1}}$ is an algebraic integer).
\end{proposition} 
\begin{proof}
First, $\lambda_{i,j}$ is an algebraic integer, and Conjecture \ref{conj:isa} states that  $\frac{\lambda_{i,j}c_1}{\lambda_{i,1}c_j}$ is an algebraic integer too, then
$$\sum_j \left(\frac{\lambda_{i,j}c_1}{\lambda_{i,1}c_j} \right) \overline{\lambda_{i',j}} = \frac{c_1}{\lambda_{i,1}} \sum_j \frac{1}{c_j} \lambda_{i,j} \overline{\lambda_{i',j}} = \frac{c_1}{\lambda_{i,1}} \delta_{i,i'},$$ is also an algebraic integer, which means Frobenius type. Note that the last equality (called Schur orthogonality relation) comes from \cite[Lemma 2.3]{ostrik} and the fact that a finite dimensional isometry is unitary. 
\end{proof}

Conjecture \ref{conj:isa} is true for the multiplicity one up to rank six case.
There exists another criterion (also proved by V. Ostrik \cite[Theorem 2.21]{Ost15}) using the formal codegrees. It is good to mention its commutative version here (as it is short to state): 

\begin{theorem}
If $2\sum_j 1/c_j^2 > 1+1/c_1$, then $\mathcal{F}$ admits no pseudo-unitary complex categorification.
\end{theorem}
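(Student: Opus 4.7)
The plan is to derive this criterion from Ostrik's Theorem~2.21 in \cite{Ost15}, which imposes a universal inequality on the formal codegrees of any pseudo-unitary complex fusion category. My contribution is simply to specialize that inequality to the commutative setting and then take the contrapositive.

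The first step is to match the formal codegrees of Ostrik with the numbers $c_j$ defined before the theorem. In the commutative setting the fusion matrices $X_i$ are simultaneously diagonalizable; every irreducible representation $\rho_j$ of $\mathcal{F}$ is one-dimensional, $\rho_j(X_i) = \lambda_{i,j}$, and Ostrik's formal codegree of $\rho_j$ is $\sum_i |\lambda_{i,j}|^2 = c_j$, recovering the quantities introduced before Theorem~\ref{thm:drinfeld}. The Frobenius--Perron representation contributes the largest codegree $c_1 = \FPdim(\mathcal{F})$.

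The second step is to substitute this identification into Ostrik's inequality, taking all representation dimensions equal to $1$, to obtain the clean statement
$$2\sum_j \frac{1}{c_j^2} \le 1 + \frac{1}{c_1}$$
as a necessary condition for a pseudo-unitary complex categorification. Negating yields the claimed criterion. The only real work is the bookkeeping: matching Ostrik's normalization conventions (in particular, verifying that the contribution of the Frobenius--Schur indicator and of the unit codegree collapses, in the commutative case, to the clean right-hand side $1 + 1/c_1$) and confirming that the weighted sum over irreducible representations reduces to $\sum_j 1/c_j^2$ once all $\dim \rho_j = 1$. No new combinatorial input is needed beyond the quotation of \cite[Theorem 2.21]{Ost15}, so the main obstacle is purely notational.
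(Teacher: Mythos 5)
Your proposal coincides with the paper's own treatment: the authors give no independent argument, presenting this statement as nothing more than the commutative specialization of \cite[Theorem 2.21]{Ost15}, with the identification of the formal codegrees as $c_j = \sum_i |\lambda_{i,j}|^2$ already set up in the discussion of the Drinfeld center criterion. Your additional bookkeeping (all irreducible representations one-dimensional, the Frobenius--Perron representation giving $c_1 = \FPdim(\mathcal{F})$) is exactly the specialization the paper intends, so the two routes are the same.
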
 

Note that up to rank six it applies at multiplicity at least two (so not the case concerned by this paper). It was visually compared in \cite{LPW20} with the criterion of \S \ref{sub:schur}.
 
\subsection{d-number criterion} \label{sub:Dnumb}
Let $\mathcal{F}$ be a \emph{commutative} fusion ring, and let $(c_j)$ be its formal codegrees as defined in Subsection \ref{sub:drinfeld}.

\begin{definition}[\cite{ostrik}, Definition 1.1] An algebraic integer $\alpha$ is called a \emph{d-number} if the ideal it generates in the ring of algebraic integers is invariant under the action of the absolute Galois group $Gal(\overline{\mathbb{Q}} / \mathbb{Q})$.
\end{definition}

\begin{theorem}[\cite{ostrik}, Theorem 1.2] \label{thm:Dnumb}
The formal codegrees of a complex (multi-)fusion category are d-numbers.
\end{theorem}

In particular, if $\mathcal{F}$ admits a complex categorification then the formal codegrees $(c_i)$ are d-numbers. Here is a practical way to check whether a number is a d-number.

\begin{lemma}[\cite{ostrik}, Lemma 2.7] \label{lem:Dnumb}
An algebraic integer $\alpha$ is a d-number if and only if its minimal polynomial $p(x) = x^n + a_1x^{n-1} + \dots + a_n$ (where $a_i \in \mathbb{Z}$) satisfies that $(a_n)^i$ divides $(a_i)^n$ for all $i$.
\end{lemma}

 \subsection{Extended cyclotomic criterion}

The following theorem is a slight extension of the usual cyclotomic criterion (on the simple object FPdims) of a fusion ring to all the entries of its formal character table, but in the commutative case only.  

\begin{theorem} \label{thm:cyclo}
Let $\mathcal{F}$ be a commutative fusion ring. If there is a fusion matrix such that the splitting field of its minimal polynomial is a non-abelian extension of $\mathbb{Q}$, then $\mathcal{F}$ admits no complex categorification.
\end{theorem}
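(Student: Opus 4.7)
The plan is to prove the contrapositive: assuming that $\mathcal{F}$ admits a complex categorification $\mathcal{C}$, I will show that the splitting field of the minimal polynomial of every fusion matrix must be an abelian extension of $\mathbb{Q}$. Since $\mathcal{F}$ is commutative, the fusion matrices $\{N_i\}_{i\in I}$ form a commuting family of matrices in $\mathrm{M}_r(\mathbb{Z})$ that are simultaneously diagonalizable over $\mathbb{C}$, and the eigenvalues of $N_i$ are precisely the values $\chi(X_i)$ as $\chi$ ranges over the one-dimensional characters of the semisimple $\mathbb{C}$-algebra $\mathcal{F} \otimes_{\mathbb{Z}}\mathbb{C}$. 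Thus the splitting field of the minimal polynomial of $N_i$ equals the subfield of $\overline{\mathbb{Q}}$ generated (over $\mathbb{Q}$) by the column $\{\chi(X_i)\}_\chi$ of the formal character table.

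The key input I would invoke is that for a fusion category over $\mathbb{C}$, all entries of the character table of its Grothendieck ring are cyclotomic algebraic integers. This is the natural extension of the classical cyclotomicity result for Frobenius--Perron dimensions (one specific character of $\mathcal{F}$) to arbitrary characters, and it follows from the modularity of the Drinfeld center $Z(\mathcal{C})$ together with Ng--Schauenburg's theorem that all $S$-matrix entries of a modular fusion category are cyclotomic: the characters of the commutative ring $K(\mathcal{C})$ can be realized through the induction/restriction between $\mathcal{C}$ and $Z(\mathcal{C})$ and are thereby expressible in terms of such $S$-matrix entries. Granting this, every eigenvalue of $N_i$ lies in some cyclotomic field $\mathbb{Q}(\zeta_n)$. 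By Kronecker--Weber, $\mathbb{Q}(\zeta_n)/\mathbb{Q}$ is abelian, and every subfield of an abelian extension is itself abelian; hence the splitting field of the minimal polynomial of $N_i$ is an abelian extension of $\mathbb{Q}$. Contrapositively, if some fusion matrix of $\mathcal{F}$ has non-abelian splitting field, then no complex categorification of $\mathcal{F}$ can exist.

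The main obstacle is the first step: extending the cyclotomicity of $\mathrm{FPdim}$ to all entries of the character table. The classical proof (for $\mathrm{FPdim}$) uses only that the regular element acts by an integer matrix whose top eigenvalue is a totally positive algebraic integer and that the total dimension of the category is invariant under the Galois group; that argument does not directly apply to a non-distinguished character. One must instead pass through the Drinfeld center (or invoke Ostrik's work on formal codegrees) to access the cyclotomicity of arbitrary character values. Once this fact is cited cleanly, the remaining content of the theorem is a short Galois-theoretic observation.
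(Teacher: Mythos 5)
Your argument is essentially the paper's: reduce to the statement that all entries of the formal character table (equivalently, all eigenvalues of the fusion matrices) are cyclotomic integers, and then conclude by Kronecker--Weber that the splitting fields are abelian. The cyclotomicity fact you derive via the Drinfeld center is exactly \cite[Theorem 8.51]{ENO05}, which the paper cites directly (the underlying cyclotomicity of $S$-matrices there goes back to de Boer--Goeree and Coste--Gannon rather than Ng--Schauenburg, but this does not affect the argument).
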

\begin{proof}
First of all, a field extension of $\mathbb{Q}$ is abelian if and only if it is cyclotomic \cite{lang}. So the statement says that the eigenvalues of the fusion matrices of a commutative complex fusion category are cyclotomic integers. Next, a fusion ring is commutative if and only if its irreducible representations are one-dimensional, so the result follows by \cite[Theorem 8.51]{ENO05}.
\end{proof}

\begin{question}
Does Theorem \ref{thm:cyclo} extend to noncommutative fusion rings?
\end{question}


\subsection{Lagrange criterion}

Let mention here the generalization of Lagrange's theorem from finite groups to finite tensor categories. It will be used as a criterion of general categorification (over any field).

\begin{theorem}[\cite{EGNO15}, Theorem 7.17.6] \label{thm:lagrange}
Let $\mathcal{D}$ be a finite tensor category, and $\mathcal{C} \subset \mathcal{D}$ be a tensor
subcategory. Then the ratio $\FPdim(\mathcal{D})/ \FPdim(\mathcal{C})$ is an algebraic integer.
\end{theorem} 
 
\subsection{Zero spectrum criterion} \label{sub:zero} 
Here is a general categorification obstruction over every field (and see Remark \ref{spectrum}). It corresponds to the existence of an equation of the form $xy=0$ with $x,y \neq 0$.

\begin{theorem}[\cite{LPR1}] \label{thm:zero}
For a fusion ring $\mathcal{F}$, 
if there are indices $i_j \in I$, $1\leq j \leq 9$, such that
$N_{i_4,i_1}^{i_6}$, $N_{i_5,i_4}^{i_2}$, $N_{i_5,i_6}^{i_3}$, $N_{i_7,i_9}^{i_1}$, $N_{i_2,i_7}^{i_8}$, $N_{i_8,i_9}^{i_3}$ are non-zero, and 
\begin{align} \label{Equ: 6}
\sum_{k} N_{i_4,i_7}^{k} N_{i_5^*,i_8}^{k} N_{i_6,i_9^*}^{k}&=0;\\  \label{Equ: 9}
N_{i_2,i_1}^{i_{3}}&=1; \\ \label{Equ: 7}
\sum_{k} N_{i_5,i_4}^{k} N_{i_3,i_1^*}^{k} &=1 
~\text{or}~
\sum_{k} N_{i_2,i_4^*}^{k} N_{i_3,i_6^*}^{k} =1 
~\text{or}~
\sum_{k} N_{i_5^*,i_2}^{k} N_{i_6,i_1^*}^{k} =1, 
\\
\label{Equ: 8}
\sum_{k} N_{i_2,i_7}^{k} N_{i_3,i_9^*}^{k} &=1 
~\text{or}~
\sum_{k} N_{i_8,i_7^*}^{k} N_{i_3,i_1^*}^{k} =1 
~\text{or}~
\sum_{k} N_{i_2^*,i_8}^{k} N_{i_1,i_9^*}^{k} =1, 
\end{align}
then $\mathcal{F}$ cannot be categorified, i.e. $\mathcal{F}$ is not the Grothendieck ring of a fusion category, over any field.
\end{theorem} 


\subsection{Quadratic fusion rings} \label{sub:quadratic}

Let $\mathcal{F}$ be a fusion ring with basis $B=\{ b_1, \dots, b_r \}$. Let $G$ be the group of invertible elements $b_i$ of $B$ (i.e. $\FPdim(b_i)=1$). The fusion ring $\mathcal{F}$ is called \emph{pointed} if $B=G$, \emph{near-group} if $|B \setminus G|=1$, and more generally \emph{quadratic} if the action of $G$ on $B \setminus G$ is transitive. A fusion category with a quadratic Grothendieck ring is called (in the literature) a \emph{quadratic category} or a \emph{generalized near-group category}.

\begin{theorem} \label{thm:quadratic}
A categorification $\mathcal{C}$ of a quadratic fusion ring must admit a spherical (so pivotal) structure.
\end{theorem}
\begin{proof}
By \cite[Theorem IV.3.6.]{thor}, $\mathcal{C}$ must be $\varphi$-pseudo-unitary (i.e. pseudo-unitary up to Galois automorphism), and then spherical (so pivotal) by \cite[Proposition 2.16]{DGNO}.
\end{proof}
Theorem \ref{thm:quadratic} will be used to exclude from (general) complex categorification some quadratic fusion rings already excluded from pivotal complex categorification by Theorem \ref{thm:drinfeld}.

\subsection{Modular categorification criterion} \label{sub:mod}
Let mention the following result allowing us to see that some fusion rings admits no (complex) modular categorification.
\begin{theorem} \label{thm:mod}
Let $\mathcal{F}$ be a non-pointed weakly integral fusion ring of rank up to seven. If it is not a product, and has non self-adjoint objects, then it admits no complex modular categorification.
\end{theorem}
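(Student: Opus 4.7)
The plan is to reduce the statement to a finite case-check against classification results for low-rank modular tensor categories. Since every modular category is braided, its Grothendieck ring is automatically commutative, so we may restrict attention to commutative fusion rings; moreover, any weakly integral fusion category is pseudo-unitary by Etingof--Nikshych--Ostrik, so a hypothetical modular categorification of $\mathcal{F}$ would in fact be pseudo-unitary. This lets us deploy $S$-matrix methods without loss of generality.

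First I would enumerate the candidates: using the list in \S\ref{sec:fus}, extract the commutative, non-pointed, weakly integral, non-product multiplicity-one fusion rings of rank at most seven that possess at least one non self-adjoint simple object. This yields a short explicit list, which I would then examine one by one.

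For each candidate $\mathcal{F}$, the modular categorification is ruled out by invoking two classification inputs: the classification of weakly integral modular categories of low rank (Bruillard--Ng--Rowell--Wang for rank up to $5$, with extensions in the spirit of \cite{BGNPRW16} to ranks $6$ and $7$), and the theorem of Drinfeld--Gelaki--Nikshych--Ostrik that every integral modular category is group-theoretical, i.e., Morita equivalent to $\VVec_G^\omega$ for some finite group $G$ and $3$-cocycle $\omega$. In the weakly integral but non-integral regime one additionally uses the structural fact that such a modular category splits as a Deligne product of an integral modular category with an Ising-type factor; this product structure descends to $\mathcal{F}$ and contradicts the non-product hypothesis, so this regime is vacuous.

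The main obstacle is the matching step in ranks $6$ and $7$: one must confirm, for each enumerated candidate, that it does not appear on the modular side of the classification once the pointed and product cases are removed. Concretely, one compares FPdim signatures, formal codegrees, and fusion rules against the enumerated modular categories; a candidate is either absent from the classified list (in which case non-existence is immediate) or appears only in pointed or product form, each of which contradicts our standing hypotheses on $\mathcal{F}$. The absence of non self-adjoint simples in every non-pointed, non-product integral modular category of rank $\le 7$ is the crucial fact that makes the list of surviving candidates empty.
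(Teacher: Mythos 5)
Your overall strategy --- reduce to the classification of weakly integral modular categories of rank at most seven and check the surviving entries --- is the same as the paper's, which simply cites \cite[Theorem 1.2]{BGNPRW16}: every such category is a Deligne product of pointed, Ising ($\PSU(2)_3$-type) and metaplectic ($\SO(N)_2$-type) factors, and in the Ising and metaplectic categories all simple objects are self-adjoint, so a non-pointed, non-product ring with a non self-adjoint object cannot occur. However, your execution has a genuine gap exactly where the theorem does its real work. You dispose of the weakly integral non-integral regime by asserting that such a modular category splits as a Deligne product of an integral modular category with an Ising-type factor, hence is a product and the regime is vacuous. That structural claim is false: the metaplectic categories $\SO(N)_2$ (for odd $N>1$) are non-pointed, non-product, weakly integral, non-integral modular categories of rank $\le 7$, and they do not factor through an Ising piece (e.g.\ $\SO(5)_2$ has rank $6$, dimension $20$ and type $[1,1,2,2,\sqrt{5},\sqrt{5}]$, which is not the type of any product with a dimension-$4$ Ising factor). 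Worse, the fusion rings to which the theorem is actually applied in this paper --- the rank-$5$, $\FPdim\,12$ and rank-$6$, $\FPdim\,20$ zestings --- live precisely in this regime and share their FPdim signature with $\SO(3)_2$ and $\SO(5)_2$, so they are not "absent from the classified list" by dimension data alone; one must use the finer fact that every simple object of a metaplectic category is self-adjoint. Your closing sentence invokes self-adjointness only for the \emph{integral} non-pointed non-product categories, which is the empty case here; the crucial self-adjointness check for Ising and metaplectic categories is exactly the step your argument omits.

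Two smaller points: the appeal to "Drinfeld--Gelaki--Nikshych--Ostrik: every integral modular category is group-theoretical" is not a theorem (the correct DGNO statement characterizes group-theoretical integral modular categories as the nilpotent ones), but this input is not needed anyway; and the classification in \cite{BGNPRW16} already covers ranks $6$ and $7$ directly, so no "extension in the spirit of" is required.
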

\begin{proof}
It is an immediate consequence of \cite[Theorem 1.2]{BGNPRW16}, because in the Ising and metaplectic categories, respectively Grothendieck equivalent to $\PSU(2)_3$ and $\SO(N)_2$ (with $N>1$ odd) the objects are self-adjoint (see \S \ref{sub:zest}).
\end{proof}

\section{The list of fusion rings of multiplicity one up to rank six}  \label{sec:fus}

This section provides the full list of $72$ fusion rings of multiplicity one up to rank $6$, together with additional data as whether it is a complex Grothendieck ring, a fusion category model (when known), and properties (as quadratic).

\subsection{Notations}
If a fusion ring does not check the Schur product (resp. Drinfeld center, d-number, extended cyclotomic, Lagrange, zero spectrum) criterion Theorem \ref{thm:schur} (resp. \ref{thm:drinfeld}, \ref{thm:Dnumb}, \ref{thm:cyclo}, \ref{thm:lagrange}, \ref{thm:zero}) then it will be qualified as  \emph{non-Schur} (resp. \emph{non-Drinfeld}, \emph{non-d-number}, \emph{non-cyclo}, \emph{non-Lagrange}, \emph{non-Czero}), and so ruled out from unitary (resp. pivotal complex, complex, complex, any, any) categorification. Otherwise it is Schur (resp. Drinfeld, d-number, cyclo, Lagrange, Czero) by default, except for the non-cyclo ones, on which the Drinfeld criteria, d-number and Lagrange were not tested. The \emph{type} of a fusion ring is the list of FPdim of the basic elements. Each commutative fusion ring is provided by its formal codegrees (with multiplicities) in decreasing order (exact form if quadratic, otherwise numerical with equation), then the reader can easily checked the Drinfeld criterion in cyclo case. The non-Czero ones are also provided by indices $(i_1, \dots, i_9)$ applying on Theorem \ref{thm:zero}. Let $\alpha_r$ denotes the number $2\cos(\pi/r)$, so $\alpha_3=1$, $\alpha_4=\sqrt{2}$, $\alpha_5=(1+\sqrt{5})/2$, $\alpha_6=\sqrt{3}$. For each rank the fusion rings are numbered with \textnumero. Finally, the fusion rings which admit a complex categorification are marked with $\star$.


\subsection{Rank 2}

\begin{itemize}
\item $\FPdim \ 2$, type $[1,1]$, one fusion ring (\textnumero 1): 
$$ \normalsize{\left[ \begin{smallmatrix}1 & 0 \\ 0 & 1 \end{smallmatrix} \right], \ 
 \left[ \begin{smallmatrix} 0 & 1 \\ 1 & 0  \end{smallmatrix} \right]} $$
\begin{itemize}
\item Formal codegrees: $[(2, 2)]$.
\item[$\star$] Properties: pointed, simple. 
\item Model: $\VVec(C_2)$.
\end{itemize}

\item $\FPdim \ \alpha_5 + 2 \simeq 3.618$, type $[1, \alpha_5]$, one fusion ring (\textnumero 2): 
$$ \normalsize{\left[ \begin{smallmatrix}1 & 0 \\ 0 & 1 \end{smallmatrix} \right], \ 
 \left[ \begin{smallmatrix} 0 & 1 \\ 1 & 1  \end{smallmatrix} \right]} $$
 \begin{itemize}
\item Formal codegrees: $[((5+\sqrt{5})/2, 1), ((5-\sqrt{5})/2, 1)]$.
\item[$\star$] Properties: near-group $C_1+1$, simple.
\item Model: $\PSU(2)_3$.
\end{itemize}
\end{itemize}

\subsection{Rank 3}
\begin{itemize}
\item $\FPdim \ 3$, type $[1,1,1]$, one fusion ring (\textnumero 1): 
$$ \normalsize{\left[ \begin{smallmatrix}1 & 0 & 0 \\ 0 & 1 & 0 \\ 0 & 0 & 1 \end{smallmatrix} \right], \ 
 \left[ \begin{smallmatrix} 0 & 1 & 0 \\ 0 & 0 & 1 \\ 1 & 0 & 0 \end{smallmatrix} \right], \ 
 \left[ \begin{smallmatrix} 0 & 0 & 1 \\ 1 & 0 & 0 \\ 0 & 1 & 0  \end{smallmatrix} \right]} $$
 \begin{itemize}
\item Formal codegrees: $[(3, 3)]$.
\item[$\star$] Properties: pointed, simple. 
\item Model: $\VVec(C_3)$.
\end{itemize}

\item $\FPdim \ 4$, type $[1,1,\alpha_4]$, one fusion ring (\textnumero 2): 
$$ \normalsize{
 \left[ \begin{smallmatrix}1 & 0 & 0 \\ 0 & 1 & 0 \\ 0 & 0 & 1 \end{smallmatrix} \right], \ 
 \left[ \begin{smallmatrix} 0 & 1 & 0 \\ 1 & 0 & 0 \\ 0 & 0 & 1 \end{smallmatrix} \right], \ 
 \left[ \begin{smallmatrix} 0 & 0 & 1 \\ 0 & 0 & 1 \\ 1 & 1 & 0  \end{smallmatrix} \right]} $$
\begin{itemize}
\item Formal codegrees: $[(4, 2), (2, 1)]$.
\item[$\star$] Properties: near-group $C_2+0$.
\item Model: $\SU(2)_2$.
\end{itemize}

\item $\FPdim \ 6$, type $[1,1,2]$, one fusion ring (\textnumero 3): 
$$ \normalsize{\left[ \begin{smallmatrix}1 & 0 & 0 \\ 0 & 1 & 0 \\ 0 & 0 & 1 \end{smallmatrix} \right], \ 
 \left[ \begin{smallmatrix} 0 & 1 & 0 \\ 1 & 0 & 0 \\ 0 & 0 & 1 \end{smallmatrix} \right], \ 
 \left[ \begin{smallmatrix} 0 & 0 & 1 \\ 0 & 0 & 1 \\ 1 & 1 & 1  \end{smallmatrix} \right]} $$
\begin{itemize}
\item Formal codegrees: $[(6, 1), (3, 1), (2, 1)]$.
\item[$\star$] Properties: near-group $C_2+1$.
\item Model: $\Rep(S_3)$, $\PSU(2)_4$.
\end{itemize}
 

\item $\FPdim \ \alpha_7^4-\alpha_7^2+1 \simeq 9.296$, type $[1,\alpha_7,\alpha_7^2-1]$, one fusion ring (\textnumero 4): 
$$ \normalsize{\left[ \begin{smallmatrix}1 & 0 & 0 \\ 0 & 1 & 0 \\ 0 & 0 & 1 \end{smallmatrix} \right], \ 
 \left[ \begin{smallmatrix} 0 & 1 & 0 \\ 1 & 0 & 1 \\ 0 & 1 & 1 \end{smallmatrix} \right], \ 
 \left[ \begin{smallmatrix} 0 & 0 & 1 \\ 0 & 1 & 1 \\ 1 & 1 & 1  \end{smallmatrix} \right]} $$
\begin{itemize}
\item Formal codegrees $\simeq [(9.296, 1), (2.863, 1), (1.841, 1)]$, roots of $x^3 - 14x^2 + 49x - 49$.
\item[$\star$] Properties: simple.
\item Model: $\PSU(2)_5$.
\end{itemize} 
\end{itemize}

\subsection{Rank 4}

\begin{itemize}
\item $\FPdim \ 4$, type $[1,1,1,1]$, two fusion rings (\textnumero 1,2): 
$$ \normalsize{\left[ \begin{smallmatrix}1 & 0 & 0 & 0 \\ 0 & 1 & 0 & 0 \\ 0 & 0 & 1 & 0 \\ 0 & 0 & 0 & 1 \end{smallmatrix} \right], \ 
 \left[ \begin{smallmatrix} 0 & 1 & 0 & 0 \\ 0 & 0 & 0 & 1 \\ 1 & 0 & 0 & 0 \\ 0 & 0 & 1 & 0 \end{smallmatrix} \right], \ 
 \left[ \begin{smallmatrix} 0 & 0 & 1 & 0 \\ 1 & 0 & 0 & 0 \\ 0 & 0 & 0 & 1 \\ 0 & 1 & 0 & 0 \end{smallmatrix} \right], \ 
 \left[ \begin{smallmatrix} 0 & 0 & 0 & 1 \\ 0 & 0 & 1 & 0 \\ 0 & 1 & 0 & 0 \\ 1 & 0 & 0 & 0  \end{smallmatrix} \right]} $$ 
 \begin{itemize}
\item Formal codegrees: $[(4, 4)]$.
\item[$\star$] Properties: pointed.
\item Model: $\VVec(C_4)$. 
\end{itemize} 


$$ \normalsize{
\left[ \begin{smallmatrix}1 & 0 & 0 & 0 \\ 0 & 1 & 0 & 0 \\ 0 & 0 & 1 & 0 \\ 0 & 0 & 0 & 1 \end{smallmatrix} \right], \ 
 \left[ \begin{smallmatrix} 0 & 1 & 0 & 0 \\ 1 & 0 & 0 & 0 \\ 0 & 0 & 0 & 1 \\ 0 & 0 & 1 & 0 \end{smallmatrix} \right], \ 
 \left[ \begin{smallmatrix} 0 & 0 & 1 & 0 \\ 0 & 0 & 0 & 1 \\ 1 & 0 & 0 & 0 \\ 0 & 1 & 0 & 0 \end{smallmatrix} \right], \ 
 \left[ \begin{smallmatrix} 0 & 0 & 0 & 1 \\ 0 & 0 & 1 & 0 \\ 0 & 1 & 0 & 0 \\ 1 & 0 & 0 & 0  \end{smallmatrix} \right]} $$ 
 \begin{itemize}
\item Formal codegrees: $[(4, 4)]$.
\item[$\star$] Properties: pointed.
\item Model: $\VVec(C_2^2)$.
\end{itemize} 

\item $\FPdim \ 6$, type $[1,1,1,\alpha_6]$, one fusion ring (\textnumero 3): 
$$ \normalsize{\left[ \begin{smallmatrix}1 & 0 & 0 & 0 \\ 0 & 1 & 0 & 0 \\ 0 & 0 & 1 & 0 \\ 0 & 0 & 0 & 1 \end{smallmatrix} \right], \ 
 \left[ \begin{smallmatrix} 0 & 1 & 0 & 0 \\ 0 & 0 & 1 & 0 \\ 1 & 0 & 0 & 0 \\ 0 & 0 & 0 & 1 \end{smallmatrix} \right], \ 
 \left[ \begin{smallmatrix} 0 & 0 & 1 & 0 \\ 1 & 0 & 0 & 0 \\ 0 & 1 & 0 & 0 \\ 0 & 0 & 0 & 1 \end{smallmatrix} \right], \ 
 \left[ \begin{smallmatrix} 0 & 0 & 0 & 1 \\ 0 & 0 & 0 & 1 \\ 0 & 0 & 0 & 1 \\ 1 & 1 & 1 & 0  \end{smallmatrix} \right]} $$ 
 \begin{itemize}
\item Formal codegrees: $[(6, 2), (3, 2)]$.
\item[$\star$] Properties: near-group $C_3+0$.
\item Model: $TY(C_3)$.
\end{itemize} 


\item $\FPdim \  2\alpha_5+4 \simeq 7.236$, type $[1,1,\alpha_5,\alpha_5]$, one fusion ring (\textnumero 4): 
$$ \normalsize{
 \left[ \begin{smallmatrix}1 & 0 & 0 & 0 \\ 0 & 1 & 0 & 0 \\ 0 & 0 & 1 & 0 \\ 0 & 0 & 0 & 1 \end{smallmatrix} \right], \ 
 \left[ \begin{smallmatrix} 0 & 1 & 0 & 0 \\ 1 & 0 & 0 & 0 \\ 0 & 0 & 0 & 1 \\ 0 & 0 & 1 & 0 \end{smallmatrix} \right], \ 
 \left[ \begin{smallmatrix} 0 & 0 & 1 & 0 \\ 0 & 0 & 0 & 1 \\ 1 & 0 & 1 & 0 \\ 0 & 1 & 0 & 1 \end{smallmatrix} \right], \ 
 \left[ \begin{smallmatrix} 0 & 0 & 0 & 1 \\ 0 & 0 & 1 & 0 \\ 0 & 1 & 0 & 1 \\ 1 & 0 & 1 & 0  \end{smallmatrix} \right]} $$ 
 \begin{itemize}
\item Formal codegrees: $[(5+\sqrt{5}, 2), (5-\sqrt{5}, 2)]$.
\item[$\star$] Properties: quadratic with $G=C_2$.
\item Model: $\SU(2)_3$, Bisch-Haagerup $BH_1$ , $\VVec(C_2) \otimes \PSU(2)_3$
\end{itemize}  

\item $\FPdim \  (13+\sqrt{13})/2 \simeq 8.302$, type $[1,1,1,(\sqrt{13}+1)/2]$, one fusion ring (\textnumero 5): 
$$ \normalsize{\left[ \begin{smallmatrix}1 & 0 & 0 & 0 \\ 0 & 1 & 0 & 0 \\ 0 & 0 & 1 & 0 \\ 0 & 0 & 0 & 1 \end{smallmatrix} \right], \ 
 \left[ \begin{smallmatrix} 0 & 1 & 0 & 0 \\ 0 & 0 & 1 & 0 \\ 1 & 0 & 0 & 0 \\ 0 & 0 & 0 & 1 \end{smallmatrix} \right], \ 
 \left[ \begin{smallmatrix} 0 & 0 & 1 & 0 \\ 1 & 0 & 0 & 0 \\ 0 & 1 & 0 & 0 \\ 0 & 0 & 0 & 1 \end{smallmatrix} \right], \ 
 \left[ \begin{smallmatrix} 0 & 0 & 0 & 1 \\ 0 & 0 & 0 & 1 \\ 0 & 0 & 0 & 1 \\ 1 & 1 & 1 & 1  \end{smallmatrix} \right]} $$ 
\begin{itemize}
\item Formal codegrees: $[((13+\sqrt{13})/2, 1), ((13-\sqrt{13})/2, 1), (3, 2)]$.
\item Properties: near-group $C_3+1$, non-Lagrange, non-d-number, non-Drinfeld.
\item Note: the only exclusion up to rank four (and multiplicity one).
\end{itemize}  

\item $\FPdim \ 10$, type $[1,1,2,2]$, one fusion ring (\textnumero 6): 
$$ \normalsize{
\left[ \begin{smallmatrix}  1 & 0 & 0 & 0 \\ 0 & 1 & 0 & 0 \\ 0 & 0 & 1 & 0 \\ 0 & 0 & 0 & 1 \end{smallmatrix} \right], \ 
 \left[ \begin{smallmatrix} 0 & 1 & 0 & 0 \\ 1 & 0 & 0 & 0 \\ 0 & 0 & 1 & 0 \\ 0 & 0 & 0 & 1 \end{smallmatrix} \right], \ 
 \left[ \begin{smallmatrix} 0 & 0 & 1 & 0 \\ 0 & 0 & 1 & 0 \\ 1 & 1 & 0 & 1 \\ 0 & 0 & 1 & 1 \end{smallmatrix} \right], \ 
 \left[ \begin{smallmatrix} 0 & 0 & 0 & 1 \\ 0 & 0 & 0 & 1 \\ 0 & 0 & 1 & 1 \\ 1 & 1 & 1 & 0  \end{smallmatrix} \right]} $$ 

\begin{itemize}
\item Formal codegrees: $[(10, 1), (5, 2), (2, 1)]$.
\item[$\star$] Properties: integral.
\item Model: $\Rep(D_5)$.
\end{itemize}  

\item $\FPdim \ 5\alpha_5^2 \simeq 13.090$, type $[1,\alpha_5,\alpha_5,\alpha_5+1]$, one fusion ring (\textnumero 7): 
$$ \normalsize{
\left[ \begin{smallmatrix}1 & 0 & 0 & 0 \\ 0 & 1 & 0 & 0 \\ 0 & 0 & 1 & 0 \\ 0 & 0 & 0 & 1 \end{smallmatrix} \right], \ 
 \left[ \begin{smallmatrix} 0 & 1 & 0 & 0 \\ 1 & 1 & 0 & 0 \\ 0 & 0 & 0 & 1 \\ 0 & 0 & 1 & 1 \end{smallmatrix} \right], \ 
 \left[ \begin{smallmatrix} 0 & 0 & 1 & 0 \\ 0 & 0 & 0 & 1 \\ 1 & 0 & 1 & 0 \\ 0 & 1 & 0 & 1 \end{smallmatrix} \right], \ 
 \left[ \begin{smallmatrix} 0 & 0 & 0 & 1 \\ 0 & 0 & 1 & 1 \\ 0 & 1 & 0 & 1 \\ 1 & 1 & 1 & 1  \end{smallmatrix} \right]} $$ 
\begin{itemize}
\item Formal codegrees: $[((15+5\sqrt{5})/2, 1), (5, 2), ((15-5\sqrt{5})/2, 1)]$.
\item[$\star$] Properties: perfect (the first non-simple one).
\item Model: $\PSU(2)_3^{\otimes 2}$.
\end{itemize}  

\item $\FPdim \  8+4\alpha_4 \simeq 13.657$, type $[1,1,\alpha_4+1,\alpha_4+1]$, two fusion rings (\textnumero 8,9): 
$$ \normalsize{
 \left[ \begin{smallmatrix} 1 & 0 & 0 & 0 \\ 0 & 1 & 0 & 0 \\ 0 & 0 & 1 & 0 \\ 0 & 0 & 0 & 1  \end{smallmatrix} \right], \ 
 \left[ \begin{smallmatrix} 0 & 1 & 0 & 0 \\ 1 & 0 & 0 & 0 \\ 0 & 0 & 0 & 1 \\ 0 & 0 & 1 & 0 \end{smallmatrix} \right], \ 
 \left[ \begin{smallmatrix} 0 & 0 & 1 & 0 \\ 0 & 0 & 0 & 1 \\ 1 & 0 & 1 & 1 \\ 0 & 1 & 1 & 1 \end{smallmatrix} \right], \ 
 \left[ \begin{smallmatrix} 0 & 0 & 0 & 1 \\ 0 & 0 & 1 & 0 \\ 0 & 1 & 1 & 1 \\ 1 & 0 & 1 & 1 \end{smallmatrix} \right]} $$ 
\begin{itemize}
\item Formal codegrees: $[(8+4\sqrt{2}, 1), (4, 2), (8-4\sqrt{2}, 1)]$.
\item[$\star$] Properties: quadratic $(C_2,1,1)$.
\item Model: $\PSU(2)_6$, even part the $1$-supertransitive subfactor of index $3+2\alpha_4$ without non-self-adjoint objects.
\end{itemize}  

$$ \normalsize{
 \left[ \begin{smallmatrix} 1 & 0 & 0 & 0 \\ 0 & 1 & 0 & 0 \\ 0 & 0 & 1 & 0 \\ 0 & 0 & 0 & 1 \end{smallmatrix} \right], \ 
 \left[ \begin{smallmatrix} 0 & 1 & 0 & 0 \\ 1 & 0 & 0 & 0 \\ 0 & 0 & 0 & 1 \\ 0 & 0 & 1 & 0 \end{smallmatrix} \right], \ 
 \left[ \begin{smallmatrix} 0 & 0 & 1 & 0 \\ 0 & 0 & 0 & 1 \\ 0 & 1 & 1 & 1 \\ 1 & 0 & 1 & 1 \end{smallmatrix} \right], \ 
 \left[ \begin{smallmatrix} 0 & 0 & 0 & 1 \\ 0 & 0 & 1 & 0 \\ 1 & 0 & 1 & 1 \\ 0 & 1 & 1 & 1 \end{smallmatrix} \right]} $$ 
\begin{itemize}
\item Formal codegrees: $[(8+4\sqrt{2}, 1), (4, 2), (8-4\sqrt{2}, 1)]$.
\item[$\star$] Properties: quadratic with $G=C_2$.
\item Model: even part of the $1$-supertransitive subfactor of index $3+2\alpha_4$ with non-self-adjoint objects \cite{LMP15}. 
\end{itemize}  

\item $\FPdim \ \alpha_9^4+2\alpha_9+3 \simeq 19.234$, type $[1,\alpha_9,\alpha_9^2-1,\alpha_9+1]$, one fusion ring (\textnumero 10): 
$$ \normalsize{
 \left[ \begin{smallmatrix} 1 & 0 & 0 & 0 \\ 0 & 1 & 0 & 0 \\ 0 & 0 & 1 & 0 \\ 0 & 0 & 0 & 1 \end{smallmatrix} \right], \ 
 \left[ \begin{smallmatrix} 0 & 1 & 0 & 0 \\ 1 & 0 & 1 & 0 \\ 0 & 1 & 0 & 1 \\ 0 & 0 & 1 & 1 \end{smallmatrix} \right], \ 
 \left[ \begin{smallmatrix} 0 & 0 & 1 & 0 \\ 0 & 1 & 0 & 1 \\ 1 & 0 & 1 & 1 \\ 0 & 1 & 1 & 1 \end{smallmatrix} \right], \ 
 \left[ \begin{smallmatrix} 0 & 0 & 0 & 1 \\ 0 & 0 & 1 & 1 \\ 0 & 1 & 1 & 1 \\ 1 & 1 & 1 & 1 \end{smallmatrix} \right]} $$ 
\begin{itemize}
\item Formal codegrees $\simeq [(19.234, 1), (5.445, 1), (3, 1), (2.31996, 1)]$, roots of $(x^3 - 27x^2 + 162x - 243)(x - 3)$.
\item[$\star$] Properties: simple. 
\item Model: $\PSU(2)_7$.
\end{itemize}

\end{itemize}

\subsection{Rank 5}
\begin{itemize}

\item $\FPdim \ 5$, type $[1,1,1,1,1]$, one fusion ring (\textnumero 1): 
$$ \normalsize{\left[ \begin{smallmatrix}1 & 0 & 0 & 0 & 0 \\ 0 & 1 & 0 & 0 & 0 \\ 0 & 0 & 1 & 0 & 0 \\ 0 & 0 & 0 & 1 & 0 \\ 0 & 0 & 0 & 0 & 1 \end{smallmatrix} \right], \ 
 \left[ \begin{smallmatrix} 0 & 1 & 0 & 0 & 0 \\ 0 & 0 & 0 & 0 & 1 \\ 1 & 0 & 0 & 0 & 0 \\ 0 & 0 & 1 & 0 & 0 \\ 0 & 0 & 0 & 1 & 0 \end{smallmatrix} \right], \ 
 \left[ \begin{smallmatrix} 0 & 0 & 1 & 0 & 0 \\ 1 & 0 & 0 & 0 & 0 \\ 0 & 0 & 0 & 1 & 0 \\ 0 & 0 & 0 & 0 & 1 \\ 0 & 1 & 0 & 0 & 0 \end{smallmatrix} \right], \ 
 \left[ \begin{smallmatrix} 0 & 0 & 0 & 1 & 0 \\ 0 & 0 & 1 & 0 & 0 \\ 0 & 0 & 0 & 0 & 1 \\ 0 & 1 & 0 & 0 & 0 \\ 1 & 0 & 0 & 0 & 0 \end{smallmatrix} \right], \ 
 \left[ \begin{smallmatrix} 0 & 0 & 0 & 0 & 1 \\ 0 & 0 & 0 & 1 & 0 \\ 0 & 1 & 0 & 0 & 0 \\ 1 & 0 & 0 & 0 & 0 \\ 0 & 0 & 1 & 0 & 0  \end{smallmatrix} \right]} $$ 
\begin{itemize}
\item Formal codegrees: $[(5, 5)]$.
\item[$\star$] Properties: simple.
\item Model: $\VVec(C_5)$.
\end{itemize}

\item $\FPdim \ 8$, type $[1,1,1,1,2]$, two fusion rings (\textnumero 2,3): 
$$ \normalsize{\left[ \begin{smallmatrix}1 & 0 & 0 & 0 & 0 \\ 0 & 1 & 0 & 0 & 0 \\ 0 & 0 & 1 & 0 & 0 \\ 0 & 0 & 0 & 1 & 0 \\ 0 & 0 & 0 & 0 & 1 \end{smallmatrix} \right], \ 
 \left[ \begin{smallmatrix} 0 & 1 & 0 & 0 & 0 \\ 0 & 0 & 0 & 1 & 0 \\ 1 & 0 & 0 & 0 & 0 \\ 0 & 0 & 1 & 0 & 0 \\ 0 & 0 & 0 & 0 & 1 \end{smallmatrix} \right], \ 
 \left[ \begin{smallmatrix} 0 & 0 & 1 & 0 & 0 \\ 1 & 0 & 0 & 0 & 0 \\ 0 & 0 & 0 & 1 & 0 \\ 0 & 1 & 0 & 0 & 0 \\ 0 & 0 & 0 & 0 & 1 \end{smallmatrix} \right], \ 
 \left[ \begin{smallmatrix} 0 & 0 & 0 & 1 & 0 \\ 0 & 0 & 1 & 0 & 0 \\ 0 & 1 & 0 & 0 & 0 \\ 1 & 0 & 0 & 0 & 0 \\ 0 & 0 & 0 & 0 & 1 \end{smallmatrix} \right], \ 
 \left[ \begin{smallmatrix} 0 & 0 & 0 & 0 & 1 \\ 0 & 0 & 0 & 0 & 1 \\ 0 & 0 & 0 & 0 & 1 \\ 0 & 0 & 0 & 0 & 1 \\ 1 & 1 & 1 & 1 & 0  \end{smallmatrix} \right]} $$ 
 \begin{itemize}
\item Formal codegrees: $[(8, 2), (4, 3)]$.
\item[$\star$] Properties: near-group $C_4+0$.
\item Model: $TY(C_4)$.
\end{itemize}

$$ \normalsize{\left[ \begin{smallmatrix}1 & 0 & 0 & 0 & 0 \\ 0 & 1 & 0 & 0 & 0 \\ 0 & 0 & 1 & 0 & 0 \\ 0 & 0 & 0 & 1 & 0 \\ 0 & 0 & 0 & 0 & 1 \end{smallmatrix} \right], \ 
 \left[ \begin{smallmatrix} 0 & 1 & 0 & 0 & 0 \\ 1 & 0 & 0 & 0 & 0 \\ 0 & 0 & 0 & 1 & 0 \\ 0 & 0 & 1 & 0 & 0 \\ 0 & 0 & 0 & 0 & 1 \end{smallmatrix} \right], \ 
 \left[ \begin{smallmatrix} 0 & 0 & 1 & 0 & 0 \\ 0 & 0 & 0 & 1 & 0 \\ 1 & 0 & 0 & 0 & 0 \\ 0 & 1 & 0 & 0 & 0 \\ 0 & 0 & 0 & 0 & 1 \end{smallmatrix} \right], \ 
 \left[ \begin{smallmatrix} 0 & 0 & 0 & 1 & 0 \\ 0 & 0 & 1 & 0 & 0 \\ 0 & 1 & 0 & 0 & 0 \\ 1 & 0 & 0 & 0 & 0 \\ 0 & 0 & 0 & 0 & 1 \end{smallmatrix} \right], \ 
 \left[ \begin{smallmatrix} 0 & 0 & 0 & 0 & 1 \\ 0 & 0 & 0 & 0 & 1 \\ 0 & 0 & 0 & 0 & 1 \\ 0 & 0 & 0 & 0 & 1 \\ 1 & 1 & 1 & 1 & 0  \end{smallmatrix} \right]} $$ 
 \begin{itemize}
\item Formal codegrees: $[(8, 2), (4, 3)]$.
\item[$\star$] Properties: near-group $C_2^2 + 0$.
\item Model: $\Rep(D_4)$, $\Rep(Q_8)$.
\end{itemize}

\item $\FPdim \ (\sqrt{17}+17)/2 \simeq 10.562$, type $[1,1,1,1,(\sqrt{17}+1)/2]$, two fusion rings (\textnumero 4,5): 
$$ \normalsize{
\left[ \begin{smallmatrix}1 & 0 & 0 & 0 & 0 \\ 0 & 1 & 0 & 0 & 0 \\ 0 & 0 & 1 & 0 & 0 \\ 0 & 0 & 0 & 1 & 0 \\ 0 & 0 & 0 & 0 & 1 \end{smallmatrix} \right], \ 
 \left[ \begin{smallmatrix} 0 & 1 & 0 & 0 & 0 \\ 0 & 0 & 0 & 1 & 0 \\ 1 & 0 & 0 & 0 & 0 \\ 0 & 0 & 1 & 0 & 0 \\ 0 & 0 & 0 & 0 & 1 \end{smallmatrix} \right], \ 
 \left[ \begin{smallmatrix} 0 & 0 & 1 & 0 & 0 \\ 1 & 0 & 0 & 0 & 0 \\ 0 & 0 & 0 & 1 & 0 \\ 0 & 1 & 0 & 0 & 0 \\ 0 & 0 & 0 & 0 & 1 \end{smallmatrix} \right], \ 
 \left[ \begin{smallmatrix} 0 & 0 & 0 & 1 & 0 \\ 0 & 0 & 1 & 0 & 0 \\ 0 & 1 & 0 & 0 & 0 \\ 1 & 0 & 0 & 0 & 0 \\ 0 & 0 & 0 & 0 & 1 \end{smallmatrix} \right], \ 
 \left[ \begin{smallmatrix} 0 & 0 & 0 & 0 & 1 \\ 0 & 0 & 0 & 0 & 1 \\ 0 & 0 & 0 & 0 & 1 \\ 0 & 0 & 0 & 0 & 1 \\ 1 & 1 & 1 & 1 & 1  \end{smallmatrix} \right]} $$ 
 \begin{itemize}
\item Formal codegrees: $[((17+\sqrt{17})/2, 1), ((17-\sqrt{17})/2, 1), (4, 3)]$.
\item Properties: near-group $C_4+1$, non-Lagrange, non-d-number, non-Drinfeld.
\end{itemize}

$$ \normalsize{\left[ \begin{smallmatrix}1 & 0 & 0 & 0 & 0 \\ 0 & 1 & 0 & 0 & 0 \\ 0 & 0 & 1 & 0 & 0 \\ 0 & 0 & 0 & 1 & 0 \\ 0 & 0 & 0 & 0 & 1 \end{smallmatrix} \right], \ 
 \left[ \begin{smallmatrix} 0 & 1 & 0 & 0 & 0 \\ 1 & 0 & 0 & 0 & 0 \\ 0 & 0 & 0 & 1 & 0 \\ 0 & 0 & 1 & 0 & 0 \\ 0 & 0 & 0 & 0 & 1 \end{smallmatrix} \right], \ 
 \left[ \begin{smallmatrix} 0 & 0 & 1 & 0 & 0 \\ 0 & 0 & 0 & 1 & 0 \\ 1 & 0 & 0 & 0 & 0 \\ 0 & 1 & 0 & 0 & 0 \\ 0 & 0 & 0 & 0 & 1 \end{smallmatrix} \right], \ 
 \left[ \begin{smallmatrix} 0 & 0 & 0 & 1 & 0 \\ 0 & 0 & 1 & 0 & 0 \\ 0 & 1 & 0 & 0 & 0 \\ 1 & 0 & 0 & 0 & 0 \\ 0 & 0 & 0 & 0 & 1 \end{smallmatrix} \right], \ 
 \left[ \begin{smallmatrix} 0 & 0 & 0 & 0 & 1 \\ 0 & 0 & 0 & 0 & 1 \\ 0 & 0 & 0 & 0 & 1 \\ 0 & 0 & 0 & 0 & 1 \\ 1 & 1 & 1 & 1 & 1  \end{smallmatrix} \right]} $$  
\begin{itemize}
\item Formal codegrees: $[((17+\sqrt{17})/2, 1), ((17-\sqrt{17})/2, 1), (4, 3)]$.
\item Properties: near-group $C_2^2 + 1$, non-Lagrange, non-d-number, non-Drinfeld. 
\end{itemize}

\item $\FPdim \ 12$, type $[1,1,\alpha_6,\alpha_6,2]$, two fusion rings (\textnumero 6,7): 

$$ \normalsize{\left[ \begin{smallmatrix}1 & 0 & 0 & 0 & 0 \\ 0 & 1 & 0 & 0 & 0 \\ 0 & 0 & 1 & 0 & 0 \\ 0 & 0 & 0 & 1 & 0 \\ 0 & 0 & 0 & 0 & 1 \end{smallmatrix} \right], \ 
 \left[ \begin{smallmatrix} 0 & 1 & 0 & 0 & 0 \\ 1 & 0 & 0 & 0 & 0 \\ 0 & 0 & 0 & 1 & 0 \\ 0 & 0 & 1 & 0 & 0 \\ 0 & 0 & 0 & 0 & 1 \end{smallmatrix} \right], \ 
 \left[ \begin{smallmatrix} 0 & 0 & 1 & 0 & 0 \\ 0 & 0 & 0 & 1 & 0 \\ 1 & 0 & 0 & 0 & 1 \\ 0 & 1 & 0 & 0 & 1 \\ 0 & 0 & 1 & 1 & 0 \end{smallmatrix} \right], \ 
 \left[ \begin{smallmatrix} 0 & 0 & 0 & 1 & 0 \\ 0 & 0 & 1 & 0 & 0 \\ 0 & 1 & 0 & 0 & 1 \\ 1 & 0 & 0 & 0 & 1 \\ 0 & 0 & 1 & 1 & 0 \end{smallmatrix} \right], \ 
 \left[ \begin{smallmatrix} 0 & 0 & 0 & 0 & 1 \\ 0 & 0 & 0 & 0 & 1 \\ 0 & 0 & 1 & 1 & 0 \\ 0 & 0 & 1 & 1 & 0 \\ 1 & 1 & 0 & 0 & 1  \end{smallmatrix} \right]} $$ 
 \begin{itemize}
\item Formal codegrees: $[(12, 2), (4, 2), (3, 1)]$.
\item[$\star$] Properties: Extension of $\VVec(C_2)$.
\item Model: $\SU(2)_4$, $\SO(3)_2$.
\end{itemize}

$$ \normalsize{\left[ \begin{smallmatrix}1 & 0 & 0 & 0 & 0 \\ 0 & 1 & 0 & 0 & 0 \\ 0 & 0 & 1 & 0 & 0 \\ 0 & 0 & 0 & 1 & 0 \\ 0 & 0 & 0 & 0 & 1 \end{smallmatrix} \right], \ 
 \left[ \begin{smallmatrix} 0 & 1 & 0 & 0 & 0 \\ 1 & 0 & 0 & 0 & 0 \\ 0 & 0 & 0 & 1 & 0 \\ 0 & 0 & 1 & 0 & 0 \\ 0 & 0 & 0 & 0 & 1 \end{smallmatrix} \right], \ 
 \left[ \begin{smallmatrix} 0 & 0 & 1 & 0 & 0 \\ 0 & 0 & 0 & 1 & 0 \\ 0 & 1 & 0 & 0 & 1 \\ 1 & 0 & 0 & 0 & 1 \\ 0 & 0 & 1 & 1 & 0 \end{smallmatrix} \right], \ 
 \left[ \begin{smallmatrix} 0 & 0 & 0 & 1 & 0 \\ 0 & 0 & 1 & 0 & 0 \\ 1 & 0 & 0 & 0 & 1 \\ 0 & 1 & 0 & 0 & 1 \\ 0 & 0 & 1 & 1 & 0 \end{smallmatrix} \right], \ 
 \left[ \begin{smallmatrix} 0 & 0 & 0 & 0 & 1 \\ 0 & 0 & 0 & 0 & 1 \\ 0 & 0 & 1 & 1 & 0 \\ 0 & 0 & 1 & 1 & 0 \\ 1 & 1 & 0 & 0 & 1  \end{smallmatrix} \right]} $$ 
 \begin{itemize}
\item Formal codegrees: $[(12, 2), (4, 2), (3, 1)]$.
\item[$\star$] Properties: unitarily categorified (see \S \ref{r5dim12}), non-modular, weakly group-theoretical,
\item Model: zesting of $\SO(3)_2$, see \S \ref{sub:zest}.
\end{itemize}

\item $\FPdim \ 14$, type $[1,1,2,2,2]$, one fusion ring (\textnumero 8): 
$$ \normalsize{\left[ \begin{smallmatrix}1 & 0 & 0 & 0 & 0 \\ 0 & 1 & 0 & 0 & 0 \\ 0 & 0 & 1 & 0 & 0 \\ 0 & 0 & 0 & 1 & 0 \\ 0 & 0 & 0 & 0 & 1 \end{smallmatrix} \right], \ 
 \left[ \begin{smallmatrix} 0 & 1 & 0 & 0 & 0 \\ 1 & 0 & 0 & 0 & 0 \\ 0 & 0 & 1 & 0 & 0 \\ 0 & 0 & 0 & 1 & 0 \\ 0 & 0 & 0 & 0 & 1 \end{smallmatrix} \right], \ 
 \left[ \begin{smallmatrix} 0 & 0 & 1 & 0 & 0 \\ 0 & 0 & 1 & 0 & 0 \\ 1 & 1 & 0 & 1 & 0 \\ 0 & 0 & 1 & 0 & 1 \\ 0 & 0 & 0 & 1 & 1 \end{smallmatrix} \right], \ 
 \left[ \begin{smallmatrix} 0 & 0 & 0 & 1 & 0 \\ 0 & 0 & 0 & 1 & 0 \\ 0 & 0 & 1 & 0 & 1 \\ 1 & 1 & 0 & 0 & 1 \\ 0 & 0 & 1 & 1 & 0 \end{smallmatrix} \right], \ 
 \left[ \begin{smallmatrix} 0 & 0 & 0 & 0 & 1 \\ 0 & 0 & 0 & 0 & 1 \\ 0 & 0 & 0 & 1 & 1 \\ 0 & 0 & 1 & 1 & 0 \\ 1 & 1 & 1 & 0 & 0  \end{smallmatrix} \right]} $$ 
 \begin{itemize}
\item Formal codegrees: $[(14, 1), (7, 3), (2, 1)]$.
\item[$\star$] Properties: integral.
\item Model: $\Rep(D_7)$.
\end{itemize}

\item $\FPdim \ \sqrt{13}+13 \simeq 16.606$, type $[1,1,2,(\sqrt{13}+1)/2,(\sqrt{13}+1)/2]$, one fusion ring (\textnumero 9): 
$$ \normalsize{\left[ \begin{smallmatrix}1 & 0 & 0 & 0 & 0 \\ 0 & 1 & 0 & 0 & 0 \\ 0 & 0 & 1 & 0 & 0 \\ 0 & 0 & 0 & 1 & 0 \\ 0 & 0 & 0 & 0 & 1 \end{smallmatrix} \right], \ 
 \left[ \begin{smallmatrix} 0 & 1 & 0 & 0 & 0 \\ 1 & 0 & 0 & 0 & 0 \\ 0 & 0 & 1 & 0 & 0 \\ 0 & 0 & 0 & 0 & 1 \\ 0 & 0 & 0 & 1 & 0 \end{smallmatrix} \right], \ 
 \left[ \begin{smallmatrix} 0 & 0 & 1 & 0 & 0 \\ 0 & 0 & 1 & 0 & 0 \\ 1 & 1 & 1 & 0 & 0 \\ 0 & 0 & 0 & 1 & 1 \\ 0 & 0 & 0 & 1 & 1 \end{smallmatrix} \right], \ 
 \left[ \begin{smallmatrix} 0 & 0 & 0 & 1 & 0 \\ 0 & 0 & 0 & 0 & 1 \\ 0 & 0 & 0 & 1 & 1 \\ 1 & 0 & 1 & 1 & 0 \\ 0 & 1 & 1 & 0 & 1 \end{smallmatrix} \right], \ 
 \left[ \begin{smallmatrix} 0 & 0 & 0 & 0 & 1 \\ 0 & 0 & 0 & 1 & 0 \\ 0 & 0 & 0 & 1 & 1 \\ 0 & 1 & 1 & 0 & 1 \\ 1 & 0 & 1 & 1 & 0  \end{smallmatrix} \right]} $$ 
 \begin{itemize}
\item Formal codegrees: $[(13+\sqrt{13}, 1), (13-\sqrt{13}, 1), (5+\sqrt{5} + 5, 1), (3, 1), (5-\sqrt{5}, 1)]$.
\item Properties: non-Schur, non-d-number, non-Drinfeld, non-Lagrange, non-Czero $(3, 3, 2, 2, 4, 3, 3, 3, 3)$.
\end{itemize}

\item $\FPdim \ 24$, type $[1, 1, 2, 3, 3]$, two fusion rings (\textnumero 10,11): 
$$ \normalsize{\left[ \begin{smallmatrix}1 & 0 & 0 & 0 & 0 \\ 0 & 1 & 0 & 0 & 0 \\ 0 & 0 & 1 & 0 & 0 \\ 0 & 0 & 0 & 1 & 0 \\ 0 & 0 & 0 & 0 & 1 \end{smallmatrix} \right], \ 
 \left[ \begin{smallmatrix} 0 & 1 & 0 & 0 & 0 \\ 1 & 0 & 0 & 0 & 0 \\ 0 & 0 & 1 & 0 & 0 \\ 0 & 0 & 0 & 0 & 1 \\ 0 & 0 & 0 & 1 & 0 \end{smallmatrix} \right], \ 
 \left[ \begin{smallmatrix} 0 & 0 & 1 & 0 & 0 \\ 0 & 0 & 1 & 0 & 0 \\ 1 & 1 & 1 & 0 & 0 \\ 0 & 0 & 0 & 1 & 1 \\ 0 & 0 & 0 & 1 & 1 \end{smallmatrix} \right], \ 
 \left[ \begin{smallmatrix} 0 & 0 & 0 & 1 & 0 \\ 0 & 0 & 0 & 0 & 1 \\ 0 & 0 & 0 & 1 & 1 \\ 1 & 0 & 1 & 1 & 1 \\ 0 & 1 & 1 & 1 & 1 \end{smallmatrix} \right], \ 
 \left[ \begin{smallmatrix} 0 & 0 & 0 & 0 & 1 \\ 0 & 0 & 0 & 1 & 0 \\ 0 & 0 & 0 & 1 & 1 \\ 0 & 1 & 1 & 1 & 1 \\ 1 & 0 & 1 & 1 & 1  \end{smallmatrix} \right]} $$ 
\begin{itemize}
\item Formal codegrees: $[(24, 1), (8, 1), (4, 2), (3, 1)]$.
\item[$\star$] Properties: integral.
\item Model: $\Rep(S_4)$.
\end{itemize}

$$ \normalsize{\left[ \begin{smallmatrix}1 & 0 & 0 & 0 & 0 \\ 0 & 1 & 0 & 0 & 0 \\ 0 & 0 & 1 & 0 & 0 \\ 0 & 0 & 0 & 1 & 0 \\ 0 & 0 & 0 & 0 & 1 \end{smallmatrix} \right], \ 
 \left[ \begin{smallmatrix} 0 & 1 & 0 & 0 & 0 \\ 1 & 0 & 0 & 0 & 0 \\ 0 & 0 & 1 & 0 & 0 \\ 0 & 0 & 0 & 0 & 1 \\ 0 & 0 & 0 & 1 & 0 \end{smallmatrix} \right], \ 
 \left[ \begin{smallmatrix} 0 & 0 & 1 & 0 & 0 \\ 0 & 0 & 1 & 0 & 0 \\ 1 & 1 & 1 & 0 & 0 \\ 0 & 0 & 0 & 1 & 1 \\ 0 & 0 & 0 & 1 & 1 \end{smallmatrix} \right], \ 
 \left[ \begin{smallmatrix} 0 & 0 & 0 & 1 & 0 \\ 0 & 0 & 0 & 0 & 1 \\ 0 & 0 & 0 & 1 & 1 \\ 0 & 1 & 1 & 1 & 1 \\ 1 & 0 & 1 & 1 & 1 \end{smallmatrix} \right], \ 
 \left[ \begin{smallmatrix} 0 & 0 & 0 & 0 & 1 \\ 0 & 0 & 0 & 1 & 0 \\ 0 & 0 & 0 & 1 & 1 \\ 1 & 0 & 1 & 1 & 1 \\ 0 & 1 & 1 & 1 & 1  \end{smallmatrix} \right]} $$ 
 \begin{itemize}
\item Formal codegrees: $[(24, 1), (8, 1), (4, 2), (3, 1)]$.
\item[$\star$] Properties: extension of $\Rep(S_3)$, unitarily categorified (see \S \ref{sub:r5dim24}), non-modular, weakly group-theoretical.
\item Model: unknown, but see Question \ref{M+}. 
\end{itemize}


\item $\FPdim \ 10\alpha_5^2 \simeq 26.180$, type $[1, 1, \alpha_5+1, \alpha_5+1, 2\alpha_5]$, one fusion ring (\textnumero 12): 
$$ \normalsize{
 \left[ \begin{smallmatrix}1 & 0 & 0 & 0 & 0 \\ 0 & 1 & 0 & 0 & 0 \\ 0 & 0 & 1 & 0 & 0 \\ 0 & 0 & 0 & 1 & 0 \\ 0 & 0 & 0 & 0 & 1 \end{smallmatrix} \right], \ 
 \left[ \begin{smallmatrix} 0 & 1 & 0 & 0 & 0 \\ 1 & 0 & 0 & 0 & 0 \\ 0 & 0 & 0 & 1 & 0 \\ 0 & 0 & 1 & 0 & 0 \\ 0 & 0 & 0 & 0 & 1 \end{smallmatrix} \right], \ 
 \left[ \begin{smallmatrix} 0 & 0 & 1 & 0 & 0 \\ 0 & 0 & 0 & 1 & 0 \\ 1 & 0 & 1 & 0 & 1 \\ 0 & 1 & 0 & 1 & 1 \\ 0 & 0 & 1 & 1 & 1 \end{smallmatrix} \right], \ 
 \left[ \begin{smallmatrix} 0 & 0 & 0 & 1 & 0 \\ 0 & 0 & 1 & 0 & 0 \\ 0 & 1 & 0 & 1 & 1 \\ 1 & 0 & 1 & 0 & 1 \\ 0 & 0 & 1 & 1 & 1 \end{smallmatrix} \right], \ 
 \left[ \begin{smallmatrix} 0 & 0 & 0 & 0 & 1 \\ 0 & 0 & 0 & 0 & 1 \\ 0 & 0 & 1 & 1 & 1 \\ 0 & 0 & 1 & 1 & 1 \\ 1 & 1 & 1 & 1 & 1  \end{smallmatrix} \right]} $$ 
 \begin{itemize}
\item Formal codegrees: $[(15+5\sqrt{5}, 1), (5+\sqrt{5}, 1), (5, 1), (15-5\sqrt{5}, 1), (5-\sqrt{5}, 1)]$.
\item[$\star$] Properties: extension of $\VVec(C_2)$.
\item Model: $\PSU(2)_8$.
\end{itemize}

\item $\FPdim \ 16+10\alpha_4 \simeq 30.142$, type $[1, 1+\alpha_4, 1+\alpha_4, 1+\alpha_4,2+\alpha_4]$, one fusion ring (\textnumero 13): 
$$ \normalsize{\left[ \begin{smallmatrix}1 & 0 & 0 & 0 & 0 \\ 0 & 1 & 0 & 0 & 0 \\ 0 & 0 & 1 & 0 & 0 \\ 0 & 0 & 0 & 1 & 0 \\ 0 & 0 & 0 & 0 & 1 \end{smallmatrix} \right], \ 
 \left[ \begin{smallmatrix} 0 & 1 & 0 & 0 & 0 \\ 1 & 1 & 1 & 0 & 0 \\ 0 & 1 & 0 & 0 & 1 \\ 0 & 0 & 0 & 1 & 1 \\ 0 & 0 & 1 & 1 & 1 \end{smallmatrix} \right], \ 
 \left[ \begin{smallmatrix} 0 & 0 & 1 & 0 & 0 \\ 0 & 1 & 0 & 0 & 1 \\ 1 & 0 & 1 & 1 & 0 \\ 0 & 0 & 1 & 0 & 1 \\ 0 & 1 & 0 & 1 & 1 \end{smallmatrix} \right], \ 
 \left[ \begin{smallmatrix} 0 & 0 & 0 & 1 & 0 \\ 0 & 0 & 0 & 1 & 1 \\ 0 & 0 & 1 & 0 & 1 \\ 1 & 1 & 0 & 1 & 0 \\ 0 & 1 & 1 & 0 & 1 \end{smallmatrix} \right], \ 
 \left[ \begin{smallmatrix} 0 & 0 & 0 & 0 & 1 \\ 0 & 0 & 1 & 1 & 1 \\ 0 & 1 & 0 & 1 & 1 \\ 0 & 1 & 1 & 0 & 1 \\ 1 & 1 & 1 & 1 & 1  \end{smallmatrix} \right]} $$ 
\begin{itemize}
\item Formal codegrees: $[(16+10\sqrt{2}, 1), (7, 3), (16-10\sqrt{2}, 1)]$.
\item Properties: simple, non-Schur, non-d-number, non-Drinfeld, non-Czero $(1, 1, 2, 1, 1, 1, 2, 4, 1)$.
\end{itemize}

\item $\FPdim \ \simeq 31.092$, type $\simeq [1,1,2.903, 3.214, 3.214]$, two fusion rings (\textnumero 14,15): 
$$ \normalsize{\left[ \begin{smallmatrix}1 & 0 & 0 & 0 & 0 \\ 0 & 1 & 0 & 0 & 0 \\ 0 & 0 & 1 & 0 & 0 \\ 0 & 0 & 0 & 1 & 0 \\ 0 & 0 & 0 & 0 & 1 \end{smallmatrix} \right], \ 
 \left[ \begin{smallmatrix} 0 & 1 & 0 & 0 & 0 \\ 1 & 0 & 0 & 0 & 0 \\ 0 & 0 & 1 & 0 & 0 \\ 0 & 0 & 0 & 0 & 1 \\ 0 & 0 & 0 & 1 & 0 \end{smallmatrix} \right], \ 
 \left[ \begin{smallmatrix} 0 & 0 & 1 & 0 & 0 \\ 0 & 0 & 1 & 0 & 0 \\ 1 & 1 & 0 & 1 & 1 \\ 0 & 0 & 1 & 1 & 1 \\ 0 & 0 & 1 & 1 & 1 \end{smallmatrix} \right], \ 
 \left[ \begin{smallmatrix} 0 & 0 & 0 & 1 & 0 \\ 0 & 0 & 0 & 0 & 1 \\ 0 & 0 & 1 & 1 & 1 \\ 0 & 1 & 1 & 1 & 1 \\ 1 & 0 & 1 & 1 & 1 \end{smallmatrix} \right], \ 
 \left[ \begin{smallmatrix} 0 & 0 & 0 & 0 & 1 \\ 0 & 0 & 0 & 1 & 0 \\ 0 & 0 & 1 & 1 & 1 \\ 1 & 0 & 1 & 1 & 1 \\ 0 & 1 & 1 & 1 & 1  \end{smallmatrix} \right]} $$ 
 \begin{itemize}
\item Formal codegrees $\simeq [(31.092, 1), (5.346, 1), (4, 2), (3.561, 1)]$, roots of $x^5 - 48x^4 + 632x^3 - 3600x^2 + 9472x - 9472$.
\item Properties: non-cyclo.
\end{itemize}

$$ \normalsize{\left[ \begin{smallmatrix}1 & 0 & 0 & 0 & 0 \\ 0 & 1 & 0 & 0 & 0 \\ 0 & 0 & 1 & 0 & 0 \\ 0 & 0 & 0 & 1 & 0 \\ 0 & 0 & 0 & 0 & 1 \end{smallmatrix} \right], \ 
 \left[ \begin{smallmatrix} 0 & 1 & 0 & 0 & 0 \\ 1 & 0 & 0 & 0 & 0 \\ 0 & 0 & 1 & 0 & 0 \\ 0 & 0 & 0 & 0 & 1 \\ 0 & 0 & 0 & 1 & 0 \end{smallmatrix} \right], \ 
 \left[ \begin{smallmatrix} 0 & 0 & 1 & 0 & 0 \\ 0 & 0 & 1 & 0 & 0 \\ 1 & 1 & 0 & 1 & 1 \\ 0 & 0 & 1 & 1 & 1 \\ 0 & 0 & 1 & 1 & 1 \end{smallmatrix} \right], \ 
 \left[ \begin{smallmatrix} 0 & 0 & 0 & 1 & 0 \\ 0 & 0 & 0 & 0 & 1 \\ 0 & 0 & 1 & 1 & 1 \\ 1 & 0 & 1 & 1 & 1 \\ 0 & 1 & 1 & 1 & 1 \end{smallmatrix} \right], \ 
 \left[ \begin{smallmatrix} 0 & 0 & 0 & 0 & 1 \\ 0 & 0 & 0 & 1 & 0 \\ 0 & 0 & 1 & 1 & 1 \\ 0 & 1 & 1 & 1 & 1 \\ 1 & 0 & 1 & 1 & 1  \end{smallmatrix} \right]} $$ 
\begin{itemize}
\item Formal codegrees $\simeq [(31.092, 1), (5.346, 1), (4, 2), (3.561, 1)]$, roots of $x^5 - 48x^4 + 632x^3 - 3600x^2 + 9472x - 9472$.
\item Properties: non-cyclo.
\end{itemize}

\item $\FPdim \ \alpha_{11}^8 - 5\alpha_{11}^6 + 8\alpha_{11}^4 - 3\alpha_{11}^2 + 3 \simeq 34.646$, type $[1, \alpha_{11}, \alpha_{11}^2-1,\alpha_{11}^3-2\alpha_{11},\alpha_{11}^4-3\alpha_{11}^2+1]$, one fusion ring (\textnumero 16): 
$$ \normalsize{
 \left[ \begin{smallmatrix}1 & 0 & 0 & 0 & 0 \\ 0 & 1 & 0 & 0 & 0 \\ 0 & 0 & 1 & 0 & 0 \\ 0 & 0 & 0 & 1 & 0 \\ 0 & 0 & 0 & 0 & 1 \end{smallmatrix} \right], \ 
 \left[ \begin{smallmatrix} 0 & 1 & 0 & 0 & 0 \\ 1 & 0 & 1 & 0 & 0 \\ 0 & 1 & 0 & 1 & 0 \\ 0 & 0 & 1 & 0 & 1 \\ 0 & 0 & 0 & 1 & 1 \end{smallmatrix} \right], \ 
 \left[ \begin{smallmatrix} 0 & 0 & 1 & 0 & 0 \\ 0 & 1 & 0 & 1 & 0 \\ 1 & 0 & 1 & 0 & 1 \\ 0 & 1 & 0 & 1 & 1 \\ 0 & 0 & 1 & 1 & 1 \end{smallmatrix} \right], \ 
 \left[ \begin{smallmatrix} 0 & 0 & 0 & 1 & 0 \\ 0 & 0 & 1 & 0 & 1 \\ 0 & 1 & 0 & 1 & 1 \\ 1 & 0 & 1 & 1 & 1 \\ 0 & 1 & 1 & 1 & 1 \end{smallmatrix} \right], \ 
 \left[ \begin{smallmatrix} 0 & 0 & 0 & 0 & 1 \\ 0 & 0 & 0 & 1 & 1 \\ 0 & 0 & 1 & 1 & 1 \\ 0 & 1 & 1 & 1 & 1 \\ 1 & 1 & 1 & 1 & 1  \end{smallmatrix} \right]} $$ 
\begin{itemize}
\item Formal codegrees $\simeq [(34.646, 1), (9.408, 1), (4.815, 1), (3.324, 1), (2.807, 1)]$, roots of $x^5 - 55x^4 + 847x^3 - 5324x^2 + 14641x - 14641$.
\item[$\star$] Properties: simple.
\item Model: $\PSU(2)_9$. 
\end{itemize}
\end{itemize}

\subsection{Rank 6} \label{thm:rank6}
\begin{itemize}

\item $\FPdim \ 6$, type $[1,1,1,1,1,1]$, two fusion rings (\textnumero 1,2): 
$$ \normalsize{ \begin{smallmatrix}1 & 0 & 0 & 0 & 0 & 0 \\ 0 & 1 & 0 & 0 & 0 & 0 \\ 0 & 0 & 1 & 0 & 0 & 0 \\ 0 & 0 & 0 & 1 & 0 & 0 \\ 0 & 0 & 0 & 0 & 1 & 0 \\ 0 & 0 & 0 & 0 & 0 & 1 \end{smallmatrix} , \ 
  \begin{smallmatrix} 0 & 1 & 0 & 0 & 0 & 0 \\ 0 & 0 & 1 & 0 & 0 & 0 \\ 1 & 0 & 0 & 0 & 0 & 0 \\ 0 & 0 & 0 & 0 & 0 & 1 \\ 0 & 0 & 0 & 1 & 0 & 0 \\ 0 & 0 & 0 & 0 & 1 & 0 \end{smallmatrix} , \ 
  \begin{smallmatrix} 0 & 0 & 1 & 0 & 0 & 0 \\ 1 & 0 & 0 & 0 & 0 & 0 \\ 0 & 1 & 0 & 0 & 0 & 0 \\ 0 & 0 & 0 & 0 & 1 & 0 \\ 0 & 0 & 0 & 0 & 0 & 1 \\ 0 & 0 & 0 & 1 & 0 & 0 \end{smallmatrix} , \ 
  \begin{smallmatrix} 0 & 0 & 0 & 1 & 0 & 0 \\ 0 & 0 & 0 & 0 & 0 & 1 \\ 0 & 0 & 0 & 0 & 1 & 0 \\ 0 & 1 & 0 & 0 & 0 & 0 \\ 1 & 0 & 0 & 0 & 0 & 0 \\ 0 & 0 & 1 & 0 & 0 & 0 \end{smallmatrix} , \ 
  \begin{smallmatrix} 0 & 0 & 0 & 0 & 1 & 0 \\ 0 & 0 & 0 & 1 & 0 & 0 \\ 0 & 0 & 0 & 0 & 0 & 1 \\ 1 & 0 & 0 & 0 & 0 & 0 \\ 0 & 0 & 1 & 0 & 0 & 0 \\ 0 & 1 & 0 & 0 & 0 & 0 \end{smallmatrix} , \ 
  \begin{smallmatrix} 0 & 0 & 0 & 0 & 0 & 1 \\ 0 & 0 & 0 & 0 & 1 & 0 \\ 0 & 0 & 0 & 1 & 0 & 0 \\ 0 & 0 & 1 & 0 & 0 & 0 \\ 0 & 1 & 0 & 0 & 0 & 0 \\ 1 & 0 & 0 & 0 & 0 & 0  \end{smallmatrix} } $$ 
 \begin{itemize}
\item Formal codegrees: $[(6, 6)]$.
\item[$\star$] Properties: pointed, quadratic $(C_3,1,0)$.
\item Model: $\VVec(C_6)$. 
\end{itemize}

$$ \normalsize{\left[ \begin{smallmatrix}1 & 0 & 0 & 0 & 0 & 0 \\ 0 & 1 & 0 & 0 & 0 & 0 \\ 0 & 0 & 1 & 0 & 0 & 0 \\ 0 & 0 & 0 & 1 & 0 & 0 \\ 0 & 0 & 0 & 0 & 1 & 0 \\ 0 & 0 & 0 & 0 & 0 & 1 \end{smallmatrix} \right], \ 
 \left[ \begin{smallmatrix} 0 & 1 & 0 & 0 & 0 & 0 \\ 0 & 0 & 1 & 0 & 0 & 0 \\ 1 & 0 & 0 & 0 & 0 & 0 \\ 0 & 0 & 0 & 0 & 1 & 0 \\ 0 & 0 & 0 & 0 & 0 & 1 \\ 0 & 0 & 0 & 1 & 0 & 0 \end{smallmatrix} \right], \ 
 \left[ \begin{smallmatrix} 0 & 0 & 1 & 0 & 0 & 0 \\ 1 & 0 & 0 & 0 & 0 & 0 \\ 0 & 1 & 0 & 0 & 0 & 0 \\ 0 & 0 & 0 & 0 & 0 & 1 \\ 0 & 0 & 0 & 1 & 0 & 0 \\ 0 & 0 & 0 & 0 & 1 & 0 \end{smallmatrix} \right], \ 
 \left[ \begin{smallmatrix} 0 & 0 & 0 & 1 & 0 & 0 \\ 0 & 0 & 0 & 0 & 0 & 1 \\ 0 & 0 & 0 & 0 & 1 & 0 \\ 1 & 0 & 0 & 0 & 0 & 0 \\ 0 & 0 & 1 & 0 & 0 & 0 \\ 0 & 1 & 0 & 0 & 0 & 0 \end{smallmatrix} \right], \ 
 \left[ \begin{smallmatrix} 0 & 0 & 0 & 0 & 1 & 0 \\ 0 & 0 & 0 & 1 & 0 & 0 \\ 0 & 0 & 0 & 0 & 0 & 1 \\ 0 & 1 & 0 & 0 & 0 & 0 \\ 1 & 0 & 0 & 0 & 0 & 0 \\ 0 & 0 & 1 & 0 & 0 & 0 \end{smallmatrix} \right], \ 
 \left[ \begin{smallmatrix} 0 & 0 & 0 & 0 & 0 & 1 \\ 0 & 0 & 0 & 0 & 1 & 0 \\ 0 & 0 & 0 & 1 & 0 & 0 \\ 0 & 0 & 1 & 0 & 0 & 0 \\ 0 & 1 & 0 & 0 & 0 & 0 \\ 1 & 0 & 0 & 0 & 0 & 0  \end{smallmatrix} \right]} $$ 
\begin{itemize}
\item[$\star$] Properties: noncommutative, quadratic $(C_3,-1,0)$.
\item Model: $\VVec(S_3)$. 
\end{itemize}

\item $\FPdim \ 8$, type $[1,1,1,1,\alpha_4,\alpha_4]$, four fusion rings (\textnumero 3-6): 
$$ \normalsize{\left[ \begin{smallmatrix}1 & 0 & 0 & 0 & 0 & 0 \\ 0 & 1 & 0 & 0 & 0 & 0 \\ 0 & 0 & 1 & 0 & 0 & 0 \\ 0 & 0 & 0 & 1 & 0 & 0 \\ 0 & 0 & 0 & 0 & 1 & 0 \\ 0 & 0 & 0 & 0 & 0 & 1 \end{smallmatrix} \right], \ 
 \left[ \begin{smallmatrix} 0 & 1 & 0 & 0 & 0 & 0 \\ 1 & 0 & 0 & 0 & 0 & 0 \\ 0 & 0 & 0 & 1 & 0 & 0 \\ 0 & 0 & 1 & 0 & 0 & 0 \\ 0 & 0 & 0 & 0 & 1 & 0 \\ 0 & 0 & 0 & 0 & 0 & 1 \end{smallmatrix} \right], \ 
 \left[ \begin{smallmatrix} 0 & 0 & 1 & 0 & 0 & 0 \\ 0 & 0 & 0 & 1 & 0 & 0 \\ 1 & 0 & 0 & 0 & 0 & 0 \\ 0 & 1 & 0 & 0 & 0 & 0 \\ 0 & 0 & 0 & 0 & 0 & 1 \\ 0 & 0 & 0 & 0 & 1 & 0 \end{smallmatrix} \right], \ 
 \left[ \begin{smallmatrix} 0 & 0 & 0 & 1 & 0 & 0 \\ 0 & 0 & 1 & 0 & 0 & 0 \\ 0 & 1 & 0 & 0 & 0 & 0 \\ 1 & 0 & 0 & 0 & 0 & 0 \\ 0 & 0 & 0 & 0 & 0 & 1 \\ 0 & 0 & 0 & 0 & 1 & 0 \end{smallmatrix} \right], \ 
 \left[ \begin{smallmatrix} 0 & 0 & 0 & 0 & 1 & 0 \\ 0 & 0 & 0 & 0 & 1 & 0 \\ 0 & 0 & 0 & 0 & 0 & 1 \\ 0 & 0 & 0 & 0 & 0 & 1 \\ 1 & 1 & 0 & 0 & 0 & 0 \\ 0 & 0 & 1 & 1 & 0 & 0 \end{smallmatrix} \right], \ 
 \left[ \begin{smallmatrix} 0 & 0 & 0 & 0 & 0 & 1 \\ 0 & 0 & 0 & 0 & 0 & 1 \\ 0 & 0 & 0 & 0 & 1 & 0 \\ 0 & 0 & 0 & 0 & 1 & 0 \\ 0 & 0 & 1 & 1 & 0 & 0 \\ 1 & 1 & 0 & 0 & 0 & 0  \end{smallmatrix} \right]} $$ 
\begin{itemize}
\item Formal codegrees: $[(8, 4), (4, 2)]$.
\item[$\star$] Properties: quadratic with $G=C_2^2$. 
\item Model: $\VVec(C_2) \otimes \SU(2)_2$.
\end{itemize}

$$ \normalsize{\left[ \begin{smallmatrix}1 & 0 & 0 & 0 & 0 & 0 \\ 0 & 1 & 0 & 0 & 0 & 0 \\ 0 & 0 & 1 & 0 & 0 & 0 \\ 0 & 0 & 0 & 1 & 0 & 0 \\ 0 & 0 & 0 & 0 & 1 & 0 \\ 0 & 0 & 0 & 0 & 0 & 1 \end{smallmatrix} \right], \ 
 \left[ \begin{smallmatrix} 0 & 1 & 0 & 0 & 0 & 0 \\ 0 & 0 & 0 & 1 & 0 & 0 \\ 1 & 0 & 0 & 0 & 0 & 0 \\ 0 & 0 & 1 & 0 & 0 & 0 \\ 0 & 0 & 0 & 0 & 0 & 1 \\ 0 & 0 & 0 & 0 & 1 & 0 \end{smallmatrix} \right], \ 
 \left[ \begin{smallmatrix} 0 & 0 & 1 & 0 & 0 & 0 \\ 1 & 0 & 0 & 0 & 0 & 0 \\ 0 & 0 & 0 & 1 & 0 & 0 \\ 0 & 1 & 0 & 0 & 0 & 0 \\ 0 & 0 & 0 & 0 & 0 & 1 \\ 0 & 0 & 0 & 0 & 1 & 0 \end{smallmatrix} \right], \ 
 \left[ \begin{smallmatrix} 0 & 0 & 0 & 1 & 0 & 0 \\ 0 & 0 & 1 & 0 & 0 & 0 \\ 0 & 1 & 0 & 0 & 0 & 0 \\ 1 & 0 & 0 & 0 & 0 & 0 \\ 0 & 0 & 0 & 0 & 1 & 0 \\ 0 & 0 & 0 & 0 & 0 & 1 \end{smallmatrix} \right], \ 
 \left[ \begin{smallmatrix} 0 & 0 & 0 & 0 & 1 & 0 \\ 0 & 0 & 0 & 0 & 0 & 1 \\ 0 & 0 & 0 & 0 & 0 & 1 \\ 0 & 0 & 0 & 0 & 1 & 0 \\ 0 & 1 & 1 & 0 & 0 & 0 \\ 1 & 0 & 0 & 1 & 0 & 0 \end{smallmatrix} \right], \ 
 \left[ \begin{smallmatrix} 0 & 0 & 0 & 0 & 0 & 1 \\ 0 & 0 & 0 & 0 & 1 & 0 \\ 0 & 0 & 0 & 0 & 1 & 0 \\ 0 & 0 & 0 & 0 & 0 & 1 \\ 1 & 0 & 0 & 1 & 0 & 0 \\ 0 & 1 & 1 & 0 & 0 & 0  \end{smallmatrix} \right]} $$ 
\begin{itemize}
\item Formal codegrees: $[(8, 4), (4, 2)]$.
\item[$\star$] Properties:  quadratic with $G=C_4$, unitarily categorified (see \S \ref{r6dim8}), non-modular, weakly group-theoretical,  
\item Model: zesting of $\VVec(C_2) \otimes \SU(2)_2$, see \S \ref{sub:zest}.
\end{itemize}

$$ \normalsize{\left[ \begin{smallmatrix}1 & 0 & 0 & 0 & 0 & 0 \\ 0 & 1 & 0 & 0 & 0 & 0 \\ 0 & 0 & 1 & 0 & 0 & 0 \\ 0 & 0 & 0 & 1 & 0 & 0 \\ 0 & 0 & 0 & 0 & 1 & 0 \\ 0 & 0 & 0 & 0 & 0 & 1 \end{smallmatrix} \right], \ 
 \left[ \begin{smallmatrix} 0 & 1 & 0 & 0 & 0 & 0 \\ 0 & 0 & 0 & 1 & 0 & 0 \\ 1 & 0 & 0 & 0 & 0 & 0 \\ 0 & 0 & 1 & 0 & 0 & 0 \\ 0 & 0 & 0 & 0 & 0 & 1 \\ 0 & 0 & 0 & 0 & 1 & 0 \end{smallmatrix} \right], \ 
 \left[ \begin{smallmatrix} 0 & 0 & 1 & 0 & 0 & 0 \\ 1 & 0 & 0 & 0 & 0 & 0 \\ 0 & 0 & 0 & 1 & 0 & 0 \\ 0 & 1 & 0 & 0 & 0 & 0 \\ 0 & 0 & 0 & 0 & 0 & 1 \\ 0 & 0 & 0 & 0 & 1 & 0 \end{smallmatrix} \right], \ 
 \left[ \begin{smallmatrix} 0 & 0 & 0 & 1 & 0 & 0 \\ 0 & 0 & 1 & 0 & 0 & 0 \\ 0 & 1 & 0 & 0 & 0 & 0 \\ 1 & 0 & 0 & 0 & 0 & 0 \\ 0 & 0 & 0 & 0 & 1 & 0 \\ 0 & 0 & 0 & 0 & 0 & 1 \end{smallmatrix} \right], \ 
 \left[ \begin{smallmatrix} 0 & 0 & 0 & 0 & 1 & 0 \\ 0 & 0 & 0 & 0 & 0 & 1 \\ 0 & 0 & 0 & 0 & 0 & 1 \\ 0 & 0 & 0 & 0 & 1 & 0 \\ 1 & 0 & 0 & 1 & 0 & 0 \\ 0 & 1 & 1 & 0 & 0 & 0 \end{smallmatrix} \right], \ 
 \left[ \begin{smallmatrix} 0 & 0 & 0 & 0 & 0 & 1 \\ 0 & 0 & 0 & 0 & 1 & 0 \\ 0 & 0 & 0 & 0 & 1 & 0 \\ 0 & 0 & 0 & 0 & 0 & 1 \\ 0 & 1 & 1 & 0 & 0 & 0 \\ 1 & 0 & 0 & 1 & 0 & 0  \end{smallmatrix} \right]} $$ 
\begin{itemize}
\item Formal codegrees: $[(8, 4), (4, 2)]$.
\item[$\star$] Properties: quadratic with $G=C_4$, unitarily categorified (see \S \ref{r6dim8}), non-modular, weakly group-theoretical, 
\item Model: zesting of $\VVec(C_2) \otimes \SU(2)_2$, see \S \ref{sub:zest}.
\end{itemize}

$$ \normalsize{\left[ \begin{smallmatrix}1 & 0 & 0 & 0 & 0 & 0 \\ 0 & 1 & 0 & 0 & 0 & 0 \\ 0 & 0 & 1 & 0 & 0 & 0 \\ 0 & 0 & 0 & 1 & 0 & 0 \\ 0 & 0 & 0 & 0 & 1 & 0 \\ 0 & 0 & 0 & 0 & 0 & 1 \end{smallmatrix} \right], \ 
 \left[ \begin{smallmatrix} 0 & 1 & 0 & 0 & 0 & 0 \\ 1 & 0 & 0 & 0 & 0 & 0 \\ 0 & 0 & 0 & 1 & 0 & 0 \\ 0 & 0 & 1 & 0 & 0 & 0 \\ 0 & 0 & 0 & 0 & 1 & 0 \\ 0 & 0 & 0 & 0 & 0 & 1 \end{smallmatrix} \right], \ 
 \left[ \begin{smallmatrix} 0 & 0 & 1 & 0 & 0 & 0 \\ 0 & 0 & 0 & 1 & 0 & 0 \\ 1 & 0 & 0 & 0 & 0 & 0 \\ 0 & 1 & 0 & 0 & 0 & 0 \\ 0 & 0 & 0 & 0 & 0 & 1 \\ 0 & 0 & 0 & 0 & 1 & 0 \end{smallmatrix} \right], \ 
 \left[ \begin{smallmatrix} 0 & 0 & 0 & 1 & 0 & 0 \\ 0 & 0 & 1 & 0 & 0 & 0 \\ 0 & 1 & 0 & 0 & 0 & 0 \\ 1 & 0 & 0 & 0 & 0 & 0 \\ 0 & 0 & 0 & 0 & 0 & 1 \\ 0 & 0 & 0 & 0 & 1 & 0 \end{smallmatrix} \right], \ 
 \left[ \begin{smallmatrix} 0 & 0 & 0 & 0 & 1 & 0 \\ 0 & 0 & 0 & 0 & 1 & 0 \\ 0 & 0 & 0 & 0 & 0 & 1 \\ 0 & 0 & 0 & 0 & 0 & 1 \\ 0 & 0 & 1 & 1 & 0 & 0 \\ 1 & 1 & 0 & 0 & 0 & 0 \end{smallmatrix} \right], \ 
 \left[ \begin{smallmatrix} 0 & 0 & 0 & 0 & 0 & 1 \\ 0 & 0 & 0 & 0 & 0 & 1 \\ 0 & 0 & 0 & 0 & 1 & 0 \\ 0 & 0 & 0 & 0 & 1 & 0 \\ 1 & 1 & 0 & 0 & 0 & 0 \\ 0 & 0 & 1 & 1 & 0 & 0  \end{smallmatrix} \right]} $$ 
\begin{itemize}
\item Formal codegrees: $[(8, 4), (4, 2)]$.
\item[$\star$] Properties: quadratic with $G=C_2^2$, unitarily categorified (see \S \ref{r6dim8}), non-modular, weakly group-theoretical, 
\item Model: zesting of $\VVec(C_2) \otimes \SU(2)_2$, see \S \ref{sub:zest}.
\end{itemize}

\item $\FPdim \ 10$, type $[1, 1, 1, 1, 1, \sqrt{5}]$, one fusion ring (\textnumero 7): 
$$ \normalsize{\left[ \begin{smallmatrix}1 & 0 & 0 & 0 & 0 & 0 \\ 0 & 1 & 0 & 0 & 0 & 0 \\ 0 & 0 & 1 & 0 & 0 & 0 \\ 0 & 0 & 0 & 1 & 0 & 0 \\ 0 & 0 & 0 & 0 & 1 & 0 \\ 0 & 0 & 0 & 0 & 0 & 1 \end{smallmatrix} \right], \ 
 \left[ \begin{smallmatrix} 0 & 1 & 0 & 0 & 0 & 0 \\ 0 & 0 & 0 & 0 & 1 & 0 \\ 1 & 0 & 0 & 0 & 0 & 0 \\ 0 & 0 & 1 & 0 & 0 & 0 \\ 0 & 0 & 0 & 1 & 0 & 0 \\ 0 & 0 & 0 & 0 & 0 & 1 \end{smallmatrix} \right], \ 
 \left[ \begin{smallmatrix} 0 & 0 & 1 & 0 & 0 & 0 \\ 1 & 0 & 0 & 0 & 0 & 0 \\ 0 & 0 & 0 & 1 & 0 & 0 \\ 0 & 0 & 0 & 0 & 1 & 0 \\ 0 & 1 & 0 & 0 & 0 & 0 \\ 0 & 0 & 0 & 0 & 0 & 1 \end{smallmatrix} \right], \ 
 \left[ \begin{smallmatrix} 0 & 0 & 0 & 1 & 0 & 0 \\ 0 & 0 & 1 & 0 & 0 & 0 \\ 0 & 0 & 0 & 0 & 1 & 0 \\ 0 & 1 & 0 & 0 & 0 & 0 \\ 1 & 0 & 0 & 0 & 0 & 0 \\ 0 & 0 & 0 & 0 & 0 & 1 \end{smallmatrix} \right], \ 
 \left[ \begin{smallmatrix} 0 & 0 & 0 & 0 & 1 & 0 \\ 0 & 0 & 0 & 1 & 0 & 0 \\ 0 & 1 & 0 & 0 & 0 & 0 \\ 1 & 0 & 0 & 0 & 0 & 0 \\ 0 & 0 & 1 & 0 & 0 & 0 \\ 0 & 0 & 0 & 0 & 0 & 1 \end{smallmatrix} \right], \ 
 \left[ \begin{smallmatrix} 0 & 0 & 0 & 0 & 0 & 1 \\ 0 & 0 & 0 & 0 & 0 & 1 \\ 0 & 0 & 0 & 0 & 0 & 1 \\ 0 & 0 & 0 & 0 & 0 & 1 \\ 0 & 0 & 0 & 0 & 0 & 1 \\ 1 & 1 & 1 & 1 & 1 & 0  \end{smallmatrix} \right]} $$ 
\begin{itemize}
\item Formal codegrees: $[(10, 2), (5, 4)]$.
\item[$\star$] Properties: near-group $C_5+0$.
\item Model: $TY(C_5)$.
\end{itemize}

\item FPdim  $3\alpha_5 + 6 \simeq 10.854$, type $[1, 1, 1, \alpha_5, \alpha_5, \alpha_5]$, one fusion ring (\textnumero 8): 
$$ \normalsize{\left[ \begin{smallmatrix}1 & 0 & 0 & 0 & 0 & 0 \\ 0 & 1 & 0 & 0 & 0 & 0 \\ 0 & 0 & 1 & 0 & 0 & 0 \\ 0 & 0 & 0 & 1 & 0 & 0 \\ 0 & 0 & 0 & 0 & 1 & 0 \\ 0 & 0 & 0 & 0 & 0 & 1 \end{smallmatrix} \right], \ 
 \left[ \begin{smallmatrix} 0 & 1 & 0 & 0 & 0 & 0 \\ 0 & 0 & 1 & 0 & 0 & 0 \\ 1 & 0 & 0 & 0 & 0 & 0 \\ 0 & 0 & 0 & 0 & 0 & 1 \\ 0 & 0 & 0 & 1 & 0 & 0 \\ 0 & 0 & 0 & 0 & 1 & 0 \end{smallmatrix} \right], \ 
 \left[ \begin{smallmatrix} 0 & 0 & 1 & 0 & 0 & 0 \\ 1 & 0 & 0 & 0 & 0 & 0 \\ 0 & 1 & 0 & 0 & 0 & 0 \\ 0 & 0 & 0 & 0 & 1 & 0 \\ 0 & 0 & 0 & 0 & 0 & 1 \\ 0 & 0 & 0 & 1 & 0 & 0 \end{smallmatrix} \right], \ 
 \left[ \begin{smallmatrix} 0 & 0 & 0 & 1 & 0 & 0 \\ 0 & 0 & 0 & 0 & 0 & 1 \\ 0 & 0 & 0 & 0 & 1 & 0 \\ 0 & 1 & 0 & 0 & 1 & 0 \\ 1 & 0 & 0 & 0 & 0 & 1 \\ 0 & 0 & 1 & 1 & 0 & 0 \end{smallmatrix} \right], \ 
 \left[ \begin{smallmatrix} 0 & 0 & 0 & 0 & 1 & 0 \\ 0 & 0 & 0 & 1 & 0 & 0 \\ 0 & 0 & 0 & 0 & 0 & 1 \\ 1 & 0 & 0 & 0 & 0 & 1 \\ 0 & 0 & 1 & 1 & 0 & 0 \\ 0 & 1 & 0 & 0 & 1 & 0 \end{smallmatrix} \right], \ 
 \left[ \begin{smallmatrix} 0 & 0 & 0 & 0 & 0 & 1 \\ 0 & 0 & 0 & 0 & 1 & 0 \\ 0 & 0 & 0 & 1 & 0 & 0 \\ 0 & 0 & 1 & 1 & 0 & 0 \\ 0 & 1 & 0 & 0 & 1 & 0 \\ 1 & 0 & 0 & 0 & 0 & 1  \end{smallmatrix} \right]} $$ 
\begin{itemize}
\item Formal codegrees: $[((15+3\sqrt{5})/2, 3), ((15-3\sqrt{5})/2, 3)]$.
\item[$\star$] Properties: quadratic with $G=C_3$.
\item Model: $\VVec(C_3) \otimes \PSU(2)_3$.
\end{itemize}

\item $\FPdim \ 12$, type $[1, 1, 1, 1, 2, 2]$, two fusion rings (\textnumero 9,10): 
$$ \normalsize{\left[ \begin{smallmatrix}1 & 0 & 0 & 0 & 0 & 0 \\ 0 & 1 & 0 & 0 & 0 & 0 \\ 0 & 0 & 1 & 0 & 0 & 0 \\ 0 & 0 & 0 & 1 & 0 & 0 \\ 0 & 0 & 0 & 0 & 1 & 0 \\ 0 & 0 & 0 & 0 & 0 & 1 \end{smallmatrix} \right], \ 
 \left[ \begin{smallmatrix} 0 & 1 & 0 & 0 & 0 & 0 \\ 0 & 0 & 0 & 1 & 0 & 0 \\ 1 & 0 & 0 & 0 & 0 & 0 \\ 0 & 0 & 1 & 0 & 0 & 0 \\ 0 & 0 & 0 & 0 & 0 & 1 \\ 0 & 0 & 0 & 0 & 1 & 0 \end{smallmatrix} \right], \ 
 \left[ \begin{smallmatrix} 0 & 0 & 1 & 0 & 0 & 0 \\ 1 & 0 & 0 & 0 & 0 & 0 \\ 0 & 0 & 0 & 1 & 0 & 0 \\ 0 & 1 & 0 & 0 & 0 & 0 \\ 0 & 0 & 0 & 0 & 0 & 1 \\ 0 & 0 & 0 & 0 & 1 & 0 \end{smallmatrix} \right], \ 
 \left[ \begin{smallmatrix} 0 & 0 & 0 & 1 & 0 & 0 \\ 0 & 0 & 1 & 0 & 0 & 0 \\ 0 & 1 & 0 & 0 & 0 & 0 \\ 1 & 0 & 0 & 0 & 0 & 0 \\ 0 & 0 & 0 & 0 & 1 & 0 \\ 0 & 0 & 0 & 0 & 0 & 1 \end{smallmatrix} \right], \ 
 \left[ \begin{smallmatrix} 0 & 0 & 0 & 0 & 1 & 0 \\ 0 & 0 & 0 & 0 & 0 & 1 \\ 0 & 0 & 0 & 0 & 0 & 1 \\ 0 & 0 & 0 & 0 & 1 & 0 \\ 1 & 0 & 0 & 1 & 1 & 0 \\ 0 & 1 & 1 & 0 & 0 & 1 \end{smallmatrix} \right], \ 
 \left[ \begin{smallmatrix} 0 & 0 & 0 & 0 & 0 & 1 \\ 0 & 0 & 0 & 0 & 1 & 0 \\ 0 & 0 & 0 & 0 & 1 & 0 \\ 0 & 0 & 0 & 0 & 0 & 1 \\ 0 & 1 & 1 & 0 & 0 & 1 \\ 1 & 0 & 0 & 1 & 1 & 0  \end{smallmatrix} \right]} $$ 
\begin{itemize}
\item Formal codegrees: $[(12, 2), (6, 2), (4, 2)]$.
\item[$\star$] Properties: quadratic with $G=C_4$.
\item Model: $\Rep(C_3 \rtimes C_4)$.
\end{itemize}

$$ \normalsize{\left[ \begin{smallmatrix}1 & 0 & 0 & 0 & 0 & 0 \\ 0 & 1 & 0 & 0 & 0 & 0 \\ 0 & 0 & 1 & 0 & 0 & 0 \\ 0 & 0 & 0 & 1 & 0 & 0 \\ 0 & 0 & 0 & 0 & 1 & 0 \\ 0 & 0 & 0 & 0 & 0 & 1 \end{smallmatrix} \right], \ 
 \left[ \begin{smallmatrix} 0 & 1 & 0 & 0 & 0 & 0 \\ 1 & 0 & 0 & 0 & 0 & 0 \\ 0 & 0 & 0 & 1 & 0 & 0 \\ 0 & 0 & 1 & 0 & 0 & 0 \\ 0 & 0 & 0 & 0 & 1 & 0 \\ 0 & 0 & 0 & 0 & 0 & 1 \end{smallmatrix} \right], \ 
 \left[ \begin{smallmatrix} 0 & 0 & 1 & 0 & 0 & 0 \\ 0 & 0 & 0 & 1 & 0 & 0 \\ 1 & 0 & 0 & 0 & 0 & 0 \\ 0 & 1 & 0 & 0 & 0 & 0 \\ 0 & 0 & 0 & 0 & 0 & 1 \\ 0 & 0 & 0 & 0 & 1 & 0 \end{smallmatrix} \right], \ 
 \left[ \begin{smallmatrix} 0 & 0 & 0 & 1 & 0 & 0 \\ 0 & 0 & 1 & 0 & 0 & 0 \\ 0 & 1 & 0 & 0 & 0 & 0 \\ 1 & 0 & 0 & 0 & 0 & 0 \\ 0 & 0 & 0 & 0 & 0 & 1 \\ 0 & 0 & 0 & 0 & 1 & 0 \end{smallmatrix} \right], \ 
 \left[ \begin{smallmatrix} 0 & 0 & 0 & 0 & 1 & 0 \\ 0 & 0 & 0 & 0 & 1 & 0 \\ 0 & 0 & 0 & 0 & 0 & 1 \\ 0 & 0 & 0 & 0 & 0 & 1 \\ 1 & 1 & 0 & 0 & 1 & 0 \\ 0 & 0 & 1 & 1 & 0 & 1 \end{smallmatrix} \right], \ 
 \left[ \begin{smallmatrix} 0 & 0 & 0 & 0 & 0 & 1 \\ 0 & 0 & 0 & 0 & 0 & 1 \\ 0 & 0 & 0 & 0 & 1 & 0 \\ 0 & 0 & 0 & 0 & 1 & 0 \\ 0 & 0 & 1 & 1 & 0 & 1 \\ 1 & 1 & 0 & 0 & 1 & 0  \end{smallmatrix} \right]} $$ 
\begin{itemize}
\item Formal codegrees: $[(12, 2), (6, 2), (4, 2)]$.
\item[$\star$] Properties:  quadratic with $G=C_2^2$.
\item Model: $\VVec(C_2) \otimes \Rep(S_3)$, $\Rep(D_6)$.
\end{itemize}

\item $\FPdim \ (\sqrt{21}+21)/2 \simeq 12.791$, type $[1, 1, 1, 1, 1, (\sqrt{21}+1)/2]$, one fusion ring (\textnumero 11): 
$$ \normalsize{\left[ \begin{smallmatrix}1 & 0 & 0 & 0 & 0 & 0 \\ 0 & 1 & 0 & 0 & 0 & 0 \\ 0 & 0 & 1 & 0 & 0 & 0 \\ 0 & 0 & 0 & 1 & 0 & 0 \\ 0 & 0 & 0 & 0 & 1 & 0 \\ 0 & 0 & 0 & 0 & 0 & 1 \end{smallmatrix} \right], \ 
 \left[ \begin{smallmatrix} 0 & 1 & 0 & 0 & 0 & 0 \\ 0 & 0 & 0 & 0 & 1 & 0 \\ 1 & 0 & 0 & 0 & 0 & 0 \\ 0 & 0 & 1 & 0 & 0 & 0 \\ 0 & 0 & 0 & 1 & 0 & 0 \\ 0 & 0 & 0 & 0 & 0 & 1 \end{smallmatrix} \right], \ 
 \left[ \begin{smallmatrix} 0 & 0 & 1 & 0 & 0 & 0 \\ 1 & 0 & 0 & 0 & 0 & 0 \\ 0 & 0 & 0 & 1 & 0 & 0 \\ 0 & 0 & 0 & 0 & 1 & 0 \\ 0 & 1 & 0 & 0 & 0 & 0 \\ 0 & 0 & 0 & 0 & 0 & 1 \end{smallmatrix} \right], \ 
 \left[ \begin{smallmatrix} 0 & 0 & 0 & 1 & 0 & 0 \\ 0 & 0 & 1 & 0 & 0 & 0 \\ 0 & 0 & 0 & 0 & 1 & 0 \\ 0 & 1 & 0 & 0 & 0 & 0 \\ 1 & 0 & 0 & 0 & 0 & 0 \\ 0 & 0 & 0 & 0 & 0 & 1 \end{smallmatrix} \right], \ 
 \left[ \begin{smallmatrix} 0 & 0 & 0 & 0 & 1 & 0 \\ 0 & 0 & 0 & 1 & 0 & 0 \\ 0 & 1 & 0 & 0 & 0 & 0 \\ 1 & 0 & 0 & 0 & 0 & 0 \\ 0 & 0 & 1 & 0 & 0 & 0 \\ 0 & 0 & 0 & 0 & 0 & 1 \end{smallmatrix} \right], \ 
 \left[ \begin{smallmatrix} 0 & 0 & 0 & 0 & 0 & 1 \\ 0 & 0 & 0 & 0 & 0 & 1 \\ 0 & 0 & 0 & 0 & 0 & 1 \\ 0 & 0 & 0 & 0 & 0 & 1 \\ 0 & 0 & 0 & 0 & 0 & 1 \\ 1 & 1 & 1 & 1 & 1 & 1  \end{smallmatrix} \right]} $$ 
 \begin{itemize}
\item Formal codegrees: $[((21+\sqrt{21})/2, 1), ((21-\sqrt{21})/2, 1), (5, 4)]$.
\item Properties: near-group $C_5+1$, non-Lagrange, non-d-number, non-Drinfeld.
\end{itemize}

\item $\FPdim \ 4\alpha_5+8 \simeq 14.472$, type $[1, 1, \alpha_4, \alpha_5, \alpha_5, \alpha_4\alpha_5]$, one fusion ring (\textnumero 12): 
$$ \normalsize{\left[ \begin{smallmatrix}1 & 0 & 0 & 0 & 0 & 0 \\ 0 & 1 & 0 & 0 & 0 & 0 \\ 0 & 0 & 1 & 0 & 0 & 0 \\ 0 & 0 & 0 & 1 & 0 & 0 \\ 0 & 0 & 0 & 0 & 1 & 0 \\ 0 & 0 & 0 & 0 & 0 & 1 \end{smallmatrix} \right], \ 
 \left[ \begin{smallmatrix} 0 & 1 & 0 & 0 & 0 & 0 \\ 1 & 0 & 0 & 0 & 0 & 0 \\ 0 & 0 & 1 & 0 & 0 & 0 \\ 0 & 0 & 0 & 0 & 1 & 0 \\ 0 & 0 & 0 & 1 & 0 & 0 \\ 0 & 0 & 0 & 0 & 0 & 1 \end{smallmatrix} \right], \ 
 \left[ \begin{smallmatrix} 0 & 0 & 1 & 0 & 0 & 0 \\ 0 & 0 & 1 & 0 & 0 & 0 \\ 1 & 1 & 0 & 0 & 0 & 0 \\ 0 & 0 & 0 & 0 & 0 & 1 \\ 0 & 0 & 0 & 0 & 0 & 1 \\ 0 & 0 & 0 & 1 & 1 & 0 \end{smallmatrix} \right], \ 
 \left[ \begin{smallmatrix} 0 & 0 & 0 & 1 & 0 & 0 \\ 0 & 0 & 0 & 0 & 1 & 0 \\ 0 & 0 & 0 & 0 & 0 & 1 \\ 1 & 0 & 0 & 1 & 0 & 0 \\ 0 & 1 & 0 & 0 & 1 & 0 \\ 0 & 0 & 1 & 0 & 0 & 1 \end{smallmatrix} \right], \ 
 \left[ \begin{smallmatrix} 0 & 0 & 0 & 0 & 1 & 0 \\ 0 & 0 & 0 & 1 & 0 & 0 \\ 0 & 0 & 0 & 0 & 0 & 1 \\ 0 & 1 & 0 & 0 & 1 & 0 \\ 1 & 0 & 0 & 1 & 0 & 0 \\ 0 & 0 & 1 & 0 & 0 & 1 \end{smallmatrix} \right], \ 
 \left[ \begin{smallmatrix} 0 & 0 & 0 & 0 & 0 & 1 \\ 0 & 0 & 0 & 0 & 0 & 1 \\ 0 & 0 & 0 & 1 & 1 & 0 \\ 0 & 0 & 1 & 0 & 0 & 1 \\ 0 & 0 & 1 & 0 & 0 & 1 \\ 1 & 1 & 0 & 1 & 1 & 0  \end{smallmatrix} \right]} $$ 
\begin{itemize}
\item Formal codegrees: $[(10+2\sqrt{5}, 2), (5+\sqrt{5}, 1), (10-2\sqrt{5}, 2), (5-\sqrt{5}, 1)]$.
\item[$\star$] Properties: extension of $\VVec(C_2)$.
\item Model: $\PSU(2)_3 \otimes \SU(2)_2$.
\end{itemize}

\item $\FPdim \ 18$, type $[1, 1, 2, 2, 2, 2]$, two fusion rings (\textnumero 13,14): 
$$ \normalsize{\left[ \begin{smallmatrix}1 & 0 & 0 & 0 & 0 & 0 \\ 0 & 1 & 0 & 0 & 0 & 0 \\ 0 & 0 & 1 & 0 & 0 & 0 \\ 0 & 0 & 0 & 1 & 0 & 0 \\ 0 & 0 & 0 & 0 & 1 & 0 \\ 0 & 0 & 0 & 0 & 0 & 1 \end{smallmatrix} \right], \ 
 \left[ \begin{smallmatrix} 0 & 1 & 0 & 0 & 0 & 0 \\ 1 & 0 & 0 & 0 & 0 & 0 \\ 0 & 0 & 1 & 0 & 0 & 0 \\ 0 & 0 & 0 & 1 & 0 & 0 \\ 0 & 0 & 0 & 0 & 1 & 0 \\ 0 & 0 & 0 & 0 & 0 & 1 \end{smallmatrix} \right], \ 
 \left[ \begin{smallmatrix} 0 & 0 & 1 & 0 & 0 & 0 \\ 0 & 0 & 1 & 0 & 0 & 0 \\ 1 & 1 & 1 & 0 & 0 & 0 \\ 0 & 0 & 0 & 0 & 1 & 1 \\ 0 & 0 & 0 & 1 & 0 & 1 \\ 0 & 0 & 0 & 1 & 1 & 0 \end{smallmatrix} \right], \ 
 \left[ \begin{smallmatrix} 0 & 0 & 0 & 1 & 0 & 0 \\ 0 & 0 & 0 & 1 & 0 & 0 \\ 0 & 0 & 0 & 0 & 1 & 1 \\ 1 & 1 & 0 & 1 & 0 & 0 \\ 0 & 0 & 1 & 0 & 0 & 1 \\ 0 & 0 & 1 & 0 & 1 & 0 \end{smallmatrix} \right], \ 
 \left[ \begin{smallmatrix} 0 & 0 & 0 & 0 & 1 & 0 \\ 0 & 0 & 0 & 0 & 1 & 0 \\ 0 & 0 & 0 & 1 & 0 & 1 \\ 0 & 0 & 1 & 0 & 0 & 1 \\ 1 & 1 & 0 & 0 & 1 & 0 \\ 0 & 0 & 1 & 1 & 0 & 0 \end{smallmatrix} \right], \ 
 \left[ \begin{smallmatrix} 0 & 0 & 0 & 0 & 0 & 1 \\ 0 & 0 & 0 & 0 & 0 & 1 \\ 0 & 0 & 0 & 1 & 1 & 0 \\ 0 & 0 & 1 & 0 & 1 & 0 \\ 0 & 0 & 1 & 1 & 0 & 0 \\ 1 & 1 & 0 & 0 & 0 & 1  \end{smallmatrix} \right]} $$ 
\begin{itemize}
\item Formal codegrees: $[(18, 1), (9, 4), (2, 1)]$.
\item[$\star$] Properties: integral.
\item Model: $\Rep(C_3 \rtimes S_3)$.
\end{itemize}

$$ \normalsize{\left[ \begin{smallmatrix}1 & 0 & 0 & 0 & 0 & 0 \\ 0 & 1 & 0 & 0 & 0 & 0 \\ 0 & 0 & 1 & 0 & 0 & 0 \\ 0 & 0 & 0 & 1 & 0 & 0 \\ 0 & 0 & 0 & 0 & 1 & 0 \\ 0 & 0 & 0 & 0 & 0 & 1 \end{smallmatrix} \right], \ 
 \left[ \begin{smallmatrix} 0 & 1 & 0 & 0 & 0 & 0 \\ 1 & 0 & 0 & 0 & 0 & 0 \\ 0 & 0 & 1 & 0 & 0 & 0 \\ 0 & 0 & 0 & 1 & 0 & 0 \\ 0 & 0 & 0 & 0 & 1 & 0 \\ 0 & 0 & 0 & 0 & 0 & 1 \end{smallmatrix} \right], \ 
 \left[ \begin{smallmatrix} 0 & 0 & 1 & 0 & 0 & 0 \\ 0 & 0 & 1 & 0 & 0 & 0 \\ 1 & 1 & 1 & 0 & 0 & 0 \\ 0 & 0 & 0 & 0 & 1 & 1 \\ 0 & 0 & 0 & 1 & 0 & 1 \\ 0 & 0 & 0 & 1 & 1 & 0 \end{smallmatrix} \right], \ 
 \left[ \begin{smallmatrix} 0 & 0 & 0 & 1 & 0 & 0 \\ 0 & 0 & 0 & 1 & 0 & 0 \\ 0 & 0 & 0 & 0 & 1 & 1 \\ 1 & 1 & 0 & 0 & 1 & 0 \\ 0 & 0 & 1 & 1 & 0 & 0 \\ 0 & 0 & 1 & 0 & 0 & 1 \end{smallmatrix} \right], \ 
 \left[ \begin{smallmatrix} 0 & 0 & 0 & 0 & 1 & 0 \\ 0 & 0 & 0 & 0 & 1 & 0 \\ 0 & 0 & 0 & 1 & 0 & 1 \\ 0 & 0 & 1 & 1 & 0 & 0 \\ 1 & 1 & 0 & 0 & 0 & 1 \\ 0 & 0 & 1 & 0 & 1 & 0 \end{smallmatrix} \right], \ 
 \left[ \begin{smallmatrix} 0 & 0 & 0 & 0 & 0 & 1 \\ 0 & 0 & 0 & 0 & 0 & 1 \\ 0 & 0 & 0 & 1 & 1 & 0 \\ 0 & 0 & 1 & 0 & 0 & 1 \\ 0 & 0 & 1 & 0 & 1 & 0 \\ 1 & 1 & 0 & 1 & 0 & 0  \end{smallmatrix} \right]} $$ 
\begin{itemize}
\item Formal codegrees: $[(18, 1), (9, 4), (2, 1)]$.
\item[$\star$] Properties: integral.
\item Model: $\Rep(D_9)$.
\end{itemize}

\item $\FPdim \ 2(\alpha_7^4-\alpha_7^2+1) \simeq 18.592$, type $[1, 1, \alpha_7, \alpha_7, \alpha_7^2-1,\alpha_7^2-1]$, one fusion ring (\textnumero 15): 
$$ \normalsize{\left[ \begin{smallmatrix}1 & 0 & 0 & 0 & 0 & 0 \\ 0 & 1 & 0 & 0 & 0 & 0 \\ 0 & 0 & 1 & 0 & 0 & 0 \\ 0 & 0 & 0 & 1 & 0 & 0 \\ 0 & 0 & 0 & 0 & 1 & 0 \\ 0 & 0 & 0 & 0 & 0 & 1 \end{smallmatrix} \right], \ 
 \left[ \begin{smallmatrix} 0 & 1 & 0 & 0 & 0 & 0 \\ 1 & 0 & 0 & 0 & 0 & 0 \\ 0 & 0 & 0 & 1 & 0 & 0 \\ 0 & 0 & 1 & 0 & 0 & 0 \\ 0 & 0 & 0 & 0 & 0 & 1 \\ 0 & 0 & 0 & 0 & 1 & 0 \end{smallmatrix} \right], \ 
 \left[ \begin{smallmatrix} 0 & 0 & 1 & 0 & 0 & 0 \\ 0 & 0 & 0 & 1 & 0 & 0 \\ 1 & 0 & 0 & 0 & 1 & 0 \\ 0 & 1 & 0 & 0 & 0 & 1 \\ 0 & 0 & 1 & 0 & 1 & 0 \\ 0 & 0 & 0 & 1 & 0 & 1 \end{smallmatrix} \right], \ 
 \left[ \begin{smallmatrix} 0 & 0 & 0 & 1 & 0 & 0 \\ 0 & 0 & 1 & 0 & 0 & 0 \\ 0 & 1 & 0 & 0 & 0 & 1 \\ 1 & 0 & 0 & 0 & 1 & 0 \\ 0 & 0 & 0 & 1 & 0 & 1 \\ 0 & 0 & 1 & 0 & 1 & 0 \end{smallmatrix} \right], \ 
 \left[ \begin{smallmatrix} 0 & 0 & 0 & 0 & 1 & 0 \\ 0 & 0 & 0 & 0 & 0 & 1 \\ 0 & 0 & 1 & 0 & 1 & 0 \\ 0 & 0 & 0 & 1 & 0 & 1 \\ 1 & 0 & 1 & 0 & 1 & 0 \\ 0 & 1 & 0 & 1 & 0 & 1 \end{smallmatrix} \right], \ 
 \left[ \begin{smallmatrix} 0 & 0 & 0 & 0 & 0 & 1 \\ 0 & 0 & 0 & 0 & 1 & 0 \\ 0 & 0 & 0 & 1 & 0 & 1 \\ 0 & 0 & 1 & 0 & 1 & 0 \\ 0 & 1 & 0 & 1 & 0 & 1 \\ 1 & 0 & 1 & 0 & 1 & 0  \end{smallmatrix} \right]} $$ 
\begin{itemize}
\item Formal codegrees $\simeq [(18.592, 2), (5.726, 2), (3.682, 2)]$ roots of $(x^3 - 28x^2 + 196x - 392)^2$.
\item[$\star$] Properties: extension of $\VVec(C_2)$
\item Model: $\SU(2)_5$.
\end{itemize}

\item $\FPdim \ 12+4\alpha_6 \simeq 18.928$, type $[1, 1, 1, 1, \alpha_6 + 1,  \alpha_6 + 1]$, four fusion rings (\textnumero 16-19): 
$$ \normalsize{\left[ \begin{smallmatrix}1 & 0 & 0 & 0 & 0 & 0 \\ 0 & 1 & 0 & 0 & 0 & 0 \\ 0 & 0 & 1 & 0 & 0 & 0 \\ 0 & 0 & 0 & 1 & 0 & 0 \\ 0 & 0 & 0 & 0 & 1 & 0 \\ 0 & 0 & 0 & 0 & 0 & 1 \end{smallmatrix} \right], \ 
 \left[ \begin{smallmatrix} 0 & 1 & 0 & 0 & 0 & 0 \\ 0 & 0 & 0 & 1 & 0 & 0 \\ 1 & 0 & 0 & 0 & 0 & 0 \\ 0 & 0 & 1 & 0 & 0 & 0 \\ 0 & 0 & 0 & 0 & 0 & 1 \\ 0 & 0 & 0 & 0 & 1 & 0 \end{smallmatrix} \right], \ 
 \left[ \begin{smallmatrix} 0 & 0 & 1 & 0 & 0 & 0 \\ 1 & 0 & 0 & 0 & 0 & 0 \\ 0 & 0 & 0 & 1 & 0 & 0 \\ 0 & 1 & 0 & 0 & 0 & 0 \\ 0 & 0 & 0 & 0 & 0 & 1 \\ 0 & 0 & 0 & 0 & 1 & 0 \end{smallmatrix} \right], \ 
 \left[ \begin{smallmatrix} 0 & 0 & 0 & 1 & 0 & 0 \\ 0 & 0 & 1 & 0 & 0 & 0 \\ 0 & 1 & 0 & 0 & 0 & 0 \\ 1 & 0 & 0 & 0 & 0 & 0 \\ 0 & 0 & 0 & 0 & 1 & 0 \\ 0 & 0 & 0 & 0 & 0 & 1 \end{smallmatrix} \right], \ 
 \left[ \begin{smallmatrix} 0 & 0 & 0 & 0 & 1 & 0 \\ 0 & 0 & 0 & 0 & 0 & 1 \\ 0 & 0 & 0 & 0 & 0 & 1 \\ 0 & 0 & 0 & 0 & 1 & 0 \\ 0 & 1 & 1 & 0 & 1 & 1 \\ 1 & 0 & 0 & 1 & 1 & 1 \end{smallmatrix} \right], \ 
 \left[ \begin{smallmatrix} 0 & 0 & 0 & 0 & 0 & 1 \\ 0 & 0 & 0 & 0 & 1 & 0 \\ 0 & 0 & 0 & 0 & 1 & 0 \\ 0 & 0 & 0 & 0 & 0 & 1 \\ 1 & 0 & 0 & 1 & 1 & 1 \\ 0 & 1 & 1 & 0 & 1 & 1  \end{smallmatrix} \right]} $$ 
  \begin{itemize}
\item Formal codegrees: $[(12+4\sqrt{3}, 1), (8, 2), (12-4\sqrt{3}, 1), (4, 2)]$.
\item Properties: quadratic with $G=C_4$, non-Drinfeld.
\end{itemize}

$$ \normalsize{\left[ \begin{smallmatrix}1 & 0 & 0 & 0 & 0 & 0 \\ 0 & 1 & 0 & 0 & 0 & 0 \\ 0 & 0 & 1 & 0 & 0 & 0 \\ 0 & 0 & 0 & 1 & 0 & 0 \\ 0 & 0 & 0 & 0 & 1 & 0 \\ 0 & 0 & 0 & 0 & 0 & 1 \end{smallmatrix} \right], \ 
 \left[ \begin{smallmatrix} 0 & 1 & 0 & 0 & 0 & 0 \\ 1 & 0 & 0 & 0 & 0 & 0 \\ 0 & 0 & 0 & 1 & 0 & 0 \\ 0 & 0 & 1 & 0 & 0 & 0 \\ 0 & 0 & 0 & 0 & 1 & 0 \\ 0 & 0 & 0 & 0 & 0 & 1 \end{smallmatrix} \right], \ 
 \left[ \begin{smallmatrix} 0 & 0 & 1 & 0 & 0 & 0 \\ 0 & 0 & 0 & 1 & 0 & 0 \\ 1 & 0 & 0 & 0 & 0 & 0 \\ 0 & 1 & 0 & 0 & 0 & 0 \\ 0 & 0 & 0 & 0 & 0 & 1 \\ 0 & 0 & 0 & 0 & 1 & 0 \end{smallmatrix} \right], \ 
 \left[ \begin{smallmatrix} 0 & 0 & 0 & 1 & 0 & 0 \\ 0 & 0 & 1 & 0 & 0 & 0 \\ 0 & 1 & 0 & 0 & 0 & 0 \\ 1 & 0 & 0 & 0 & 0 & 0 \\ 0 & 0 & 0 & 0 & 0 & 1 \\ 0 & 0 & 0 & 0 & 1 & 0 \end{smallmatrix} \right], \ 
 \left[ \begin{smallmatrix} 0 & 0 & 0 & 0 & 1 & 0 \\ 0 & 0 & 0 & 0 & 1 & 0 \\ 0 & 0 & 0 & 0 & 0 & 1 \\ 0 & 0 & 0 & 0 & 0 & 1 \\ 0 & 0 & 1 & 1 & 1 & 1 \\ 1 & 1 & 0 & 0 & 1 & 1 \end{smallmatrix} \right], \ 
 \left[ \begin{smallmatrix} 0 & 0 & 0 & 0 & 0 & 1 \\ 0 & 0 & 0 & 0 & 0 & 1 \\ 0 & 0 & 0 & 0 & 1 & 0 \\ 0 & 0 & 0 & 0 & 1 & 0 \\ 1 & 1 & 0 & 0 & 1 & 1 \\ 0 & 0 & 1 & 1 & 1 & 1  \end{smallmatrix} \right]} $$ 
\begin{itemize}
\item Formal codegrees: $[(12+4\sqrt{3}, 1), (8, 2), (12-4\sqrt{3}, 1), (4, 2)]$. 
\item Properties: quadratic with $G=C_2^2$, non-Drinfeld. 
\end{itemize}

$$ \normalsize{\left[ \begin{smallmatrix}1 & 0 & 0 & 0 & 0 & 0 \\ 0 & 1 & 0 & 0 & 0 & 0 \\ 0 & 0 & 1 & 0 & 0 & 0 \\ 0 & 0 & 0 & 1 & 0 & 0 \\ 0 & 0 & 0 & 0 & 1 & 0 \\ 0 & 0 & 0 & 0 & 0 & 1 \end{smallmatrix} \right], \ 
 \left[ \begin{smallmatrix} 0 & 1 & 0 & 0 & 0 & 0 \\ 0 & 0 & 0 & 1 & 0 & 0 \\ 1 & 0 & 0 & 0 & 0 & 0 \\ 0 & 0 & 1 & 0 & 0 & 0 \\ 0 & 0 & 0 & 0 & 0 & 1 \\ 0 & 0 & 0 & 0 & 1 & 0 \end{smallmatrix} \right], \ 
 \left[ \begin{smallmatrix} 0 & 0 & 1 & 0 & 0 & 0 \\ 1 & 0 & 0 & 0 & 0 & 0 \\ 0 & 0 & 0 & 1 & 0 & 0 \\ 0 & 1 & 0 & 0 & 0 & 0 \\ 0 & 0 & 0 & 0 & 0 & 1 \\ 0 & 0 & 0 & 0 & 1 & 0 \end{smallmatrix} \right], \ 
 \left[ \begin{smallmatrix} 0 & 0 & 0 & 1 & 0 & 0 \\ 0 & 0 & 1 & 0 & 0 & 0 \\ 0 & 1 & 0 & 0 & 0 & 0 \\ 1 & 0 & 0 & 0 & 0 & 0 \\ 0 & 0 & 0 & 0 & 1 & 0 \\ 0 & 0 & 0 & 0 & 0 & 1 \end{smallmatrix} \right], \ 
 \left[ \begin{smallmatrix} 0 & 0 & 0 & 0 & 1 & 0 \\ 0 & 0 & 0 & 0 & 0 & 1 \\ 0 & 0 & 0 & 0 & 0 & 1 \\ 0 & 0 & 0 & 0 & 1 & 0 \\ 1 & 0 & 0 & 1 & 1 & 1 \\ 0 & 1 & 1 & 0 & 1 & 1 \end{smallmatrix} \right], \ 
 \left[ \begin{smallmatrix} 0 & 0 & 0 & 0 & 0 & 1 \\ 0 & 0 & 0 & 0 & 1 & 0 \\ 0 & 0 & 0 & 0 & 1 & 0 \\ 0 & 0 & 0 & 0 & 0 & 1 \\ 0 & 1 & 1 & 0 & 1 & 1 \\ 1 & 0 & 0 & 1 & 1 & 1  \end{smallmatrix} \right]} $$ 
\begin{itemize}
\item Formal codegrees: $[(12+4\sqrt{3}, 1), (8, 2), (12-4\sqrt{3}, 1), (4, 2)]$.
\item Properties: quadratic with $G=C_4$, non-Drinfeld. 
\end{itemize}

$$ \normalsize{\left[ \begin{smallmatrix}1 & 0 & 0 & 0 & 0 & 0 \\ 0 & 1 & 0 & 0 & 0 & 0 \\ 0 & 0 & 1 & 0 & 0 & 0 \\ 0 & 0 & 0 & 1 & 0 & 0 \\ 0 & 0 & 0 & 0 & 1 & 0 \\ 0 & 0 & 0 & 0 & 0 & 1 \end{smallmatrix} \right], \ 
 \left[ \begin{smallmatrix} 0 & 1 & 0 & 0 & 0 & 0 \\ 1 & 0 & 0 & 0 & 0 & 0 \\ 0 & 0 & 0 & 1 & 0 & 0 \\ 0 & 0 & 1 & 0 & 0 & 0 \\ 0 & 0 & 0 & 0 & 1 & 0 \\ 0 & 0 & 0 & 0 & 0 & 1 \end{smallmatrix} \right], \ 
 \left[ \begin{smallmatrix} 0 & 0 & 1 & 0 & 0 & 0 \\ 0 & 0 & 0 & 1 & 0 & 0 \\ 1 & 0 & 0 & 0 & 0 & 0 \\ 0 & 1 & 0 & 0 & 0 & 0 \\ 0 & 0 & 0 & 0 & 0 & 1 \\ 0 & 0 & 0 & 0 & 1 & 0 \end{smallmatrix} \right], \ 
 \left[ \begin{smallmatrix} 0 & 0 & 0 & 1 & 0 & 0 \\ 0 & 0 & 1 & 0 & 0 & 0 \\ 0 & 1 & 0 & 0 & 0 & 0 \\ 1 & 0 & 0 & 0 & 0 & 0 \\ 0 & 0 & 0 & 0 & 0 & 1 \\ 0 & 0 & 0 & 0 & 1 & 0 \end{smallmatrix} \right], \ 
 \left[ \begin{smallmatrix} 0 & 0 & 0 & 0 & 1 & 0 \\ 0 & 0 & 0 & 0 & 1 & 0 \\ 0 & 0 & 0 & 0 & 0 & 1 \\ 0 & 0 & 0 & 0 & 0 & 1 \\ 1 & 1 & 0 & 0 & 1 & 1 \\ 0 & 0 & 1 & 1 & 1 & 1 \end{smallmatrix} \right], \ 
 \left[ \begin{smallmatrix} 0 & 0 & 0 & 0 & 0 & 1 \\ 0 & 0 & 0 & 0 & 0 & 1 \\ 0 & 0 & 0 & 0 & 1 & 0 \\ 0 & 0 & 0 & 0 & 1 & 0 \\ 0 & 0 & 1 & 1 & 1 & 1 \\ 1 & 1 & 0 & 0 & 1 & 1  \end{smallmatrix} \right]} $$ 
\begin{itemize}
\item Formal codegrees: $[(12+4\sqrt{3}, 1), (8, 2), (12-4\sqrt{3}, 1), (4, 2)]$.
\item Properties: quadratic with $G=C_2^2$, non-Drinfeld. 
\end{itemize}

\item $\FPdim \ 20$, type $[1, 1, 2, 2, \sqrt{5}, \sqrt{5}]$, two fusion rings (\textnumero 20,21): 

$$ \normalsize{\left[ \begin{smallmatrix}1 & 0 & 0 & 0 & 0 & 0 \\ 0 & 1 & 0 & 0 & 0 & 0 \\ 0 & 0 & 1 & 0 & 0 & 0 \\ 0 & 0 & 0 & 1 & 0 & 0 \\ 0 & 0 & 0 & 0 & 1 & 0 \\ 0 & 0 & 0 & 0 & 0 & 1 \end{smallmatrix} \right], \ 
 \left[ \begin{smallmatrix} 0 & 1 & 0 & 0 & 0 & 0 \\ 1 & 0 & 0 & 0 & 0 & 0 \\ 0 & 0 & 1 & 0 & 0 & 0 \\ 0 & 0 & 0 & 1 & 0 & 0 \\ 0 & 0 & 0 & 0 & 0 & 1 \\ 0 & 0 & 0 & 0 & 1 & 0 \end{smallmatrix} \right], \ 
 \left[ \begin{smallmatrix} 0 & 0 & 1 & 0 & 0 & 0 \\ 0 & 0 & 1 & 0 & 0 & 0 \\ 1 & 1 & 0 & 1 & 0 & 0 \\ 0 & 0 & 1 & 1 & 0 & 0 \\ 0 & 0 & 0 & 0 & 1 & 1 \\ 0 & 0 & 0 & 0 & 1 & 1 \end{smallmatrix} \right], \ 
 \left[ \begin{smallmatrix} 0 & 0 & 0 & 1 & 0 & 0 \\ 0 & 0 & 0 & 1 & 0 & 0 \\ 0 & 0 & 1 & 1 & 0 & 0 \\ 1 & 1 & 1 & 0 & 0 & 0 \\ 0 & 0 & 0 & 0 & 1 & 1 \\ 0 & 0 & 0 & 0 & 1 & 1 \end{smallmatrix} \right], \ 
 \left[ \begin{smallmatrix} 0 & 0 & 0 & 0 & 1 & 0 \\ 0 & 0 & 0 & 0 & 0 & 1 \\ 0 & 0 & 0 & 0 & 1 & 1 \\ 0 & 0 & 0 & 0 & 1 & 1 \\ 1 & 0 & 1 & 1 & 0 & 0 \\ 0 & 1 & 1 & 1 & 0 & 0 \end{smallmatrix} \right], \ 
 \left[ \begin{smallmatrix} 0 & 0 & 0 & 0 & 0 & 1 \\ 0 & 0 & 0 & 0 & 1 & 0 \\ 0 & 0 & 0 & 0 & 1 & 1 \\ 0 & 0 & 0 & 0 & 1 & 1 \\ 0 & 1 & 1 & 1 & 0 & 0 \\ 1 & 0 & 1 & 1 & 0 & 0  \end{smallmatrix} \right]} $$ 
\begin{itemize}
\item Formal codegrees: $[(20, 2), (5, 2), (4, 2)]$.
\item[$\star$] Properties: extension of $\Rep(D_5)$. 
\item Model: $SO(5)_2$.
\end{itemize}

$$ \normalsize{\left[ \begin{smallmatrix}1 & 0 & 0 & 0 & 0 & 0 \\ 0 & 1 & 0 & 0 & 0 & 0 \\ 0 & 0 & 1 & 0 & 0 & 0 \\ 0 & 0 & 0 & 1 & 0 & 0 \\ 0 & 0 & 0 & 0 & 1 & 0 \\ 0 & 0 & 0 & 0 & 0 & 1 \end{smallmatrix} \right], \ 
 \left[ \begin{smallmatrix} 0 & 1 & 0 & 0 & 0 & 0 \\ 1 & 0 & 0 & 0 & 0 & 0 \\ 0 & 0 & 1 & 0 & 0 & 0 \\ 0 & 0 & 0 & 1 & 0 & 0 \\ 0 & 0 & 0 & 0 & 0 & 1 \\ 0 & 0 & 0 & 0 & 1 & 0 \end{smallmatrix} \right], \ 
 \left[ \begin{smallmatrix} 0 & 0 & 1 & 0 & 0 & 0 \\ 0 & 0 & 1 & 0 & 0 & 0 \\ 1 & 1 & 0 & 1 & 0 & 0 \\ 0 & 0 & 1 & 1 & 0 & 0 \\ 0 & 0 & 0 & 0 & 1 & 1 \\ 0 & 0 & 0 & 0 & 1 & 1 \end{smallmatrix} \right], \ 
 \left[ \begin{smallmatrix} 0 & 0 & 0 & 1 & 0 & 0 \\ 0 & 0 & 0 & 1 & 0 & 0 \\ 0 & 0 & 1 & 1 & 0 & 0 \\ 1 & 1 & 1 & 0 & 0 & 0 \\ 0 & 0 & 0 & 0 & 1 & 1 \\ 0 & 0 & 0 & 0 & 1 & 1 \end{smallmatrix} \right], \ 
 \left[ \begin{smallmatrix} 0 & 0 & 0 & 0 & 1 & 0 \\ 0 & 0 & 0 & 0 & 0 & 1 \\ 0 & 0 & 0 & 0 & 1 & 1 \\ 0 & 0 & 0 & 0 & 1 & 1 \\ 0 & 1 & 1 & 1 & 0 & 0 \\ 1 & 0 & 1 & 1 & 0 & 0 \end{smallmatrix} \right], \ 
 \left[ \begin{smallmatrix} 0 & 0 & 0 & 0 & 0 & 1 \\ 0 & 0 & 0 & 0 & 1 & 0 \\ 0 & 0 & 0 & 0 & 1 & 1 \\ 0 & 0 & 0 & 0 & 1 & 1 \\ 1 & 0 & 1 & 1 & 0 & 0 \\ 0 & 1 & 1 & 1 & 0 & 0  \end{smallmatrix} \right]} $$ 
\begin{itemize}
\item Formal codegrees: $[(20, 2), (5, 2), (4, 2)]$.
\item[$\star$] Properties: extension of $\Rep(D_5)$, unitarily categorified (see \S \ref{r6dim20}), non-modular, weakly group-theoretical, 
\item Model: zesting of $\SO(5)_2$, see \S \ref{sub:zest}.
\end{itemize}

\item $\FPdim \ 12+6\alpha_4 \simeq 20.485$, type $[1,1,1,\alpha_4+1,\alpha_4+1,\alpha_4+1]$, one fusion ring (\textnumero 22): 
$$ \normalsize{\left[ \begin{smallmatrix}1 & 0 & 0 & 0 & 0 & 0 \\ 0 & 1 & 0 & 0 & 0 & 0 \\ 0 & 0 & 1 & 0 & 0 & 0 \\ 0 & 0 & 0 & 1 & 0 & 0 \\ 0 & 0 & 0 & 0 & 1 & 0 \\ 0 & 0 & 0 & 0 & 0 & 1 \end{smallmatrix} \right], \ 
 \left[ \begin{smallmatrix} 0 & 1 & 0 & 0 & 0 & 0 \\ 0 & 0 & 1 & 0 & 0 & 0 \\ 1 & 0 & 0 & 0 & 0 & 0 \\ 0 & 0 & 0 & 0 & 0 & 1 \\ 0 & 0 & 0 & 1 & 0 & 0 \\ 0 & 0 & 0 & 0 & 1 & 0 \end{smallmatrix} \right], \ 
 \left[ \begin{smallmatrix} 0 & 0 & 1 & 0 & 0 & 0 \\ 1 & 0 & 0 & 0 & 0 & 0 \\ 0 & 1 & 0 & 0 & 0 & 0 \\ 0 & 0 & 0 & 0 & 1 & 0 \\ 0 & 0 & 0 & 0 & 0 & 1 \\ 0 & 0 & 0 & 1 & 0 & 0 \end{smallmatrix} \right], \ 
 \left[ \begin{smallmatrix} 0 & 0 & 0 & 1 & 0 & 0 \\ 0 & 0 & 0 & 0 & 0 & 1 \\ 0 & 0 & 0 & 0 & 1 & 0 \\ 0 & 1 & 0 & 1 & 0 & 1 \\ 1 & 0 & 0 & 1 & 1 & 0 \\ 0 & 0 & 1 & 0 & 1 & 1 \end{smallmatrix} \right], \ 
 \left[ \begin{smallmatrix} 0 & 0 & 0 & 0 & 1 & 0 \\ 0 & 0 & 0 & 1 & 0 & 0 \\ 0 & 0 & 0 & 0 & 0 & 1 \\ 1 & 0 & 0 & 1 & 1 & 0 \\ 0 & 0 & 1 & 0 & 1 & 1 \\ 0 & 1 & 0 & 1 & 0 & 1 \end{smallmatrix} \right], \ 
 \left[ \begin{smallmatrix} 0 & 0 & 0 & 0 & 0 & 1 \\ 0 & 0 & 0 & 0 & 1 & 0 \\ 0 & 0 & 0 & 1 & 0 & 0 \\ 0 & 0 & 1 & 0 & 1 & 1 \\ 0 & 1 & 0 & 1 & 0 & 1 \\ 1 & 0 & 0 & 1 & 1 & 0  \end{smallmatrix} \right]} $$ 
\begin{itemize}
\item Formal codegrees: $[(12+6\sqrt{2}, 1), ((15+3\sqrt{5})/2, 2), ((15-3\sqrt{5})/2, 2), (12-6\sqrt{2}, 1)]$.
\item Properties: quadratic with $G=C_3$, non-Schur, non-Drinfeld.
\end{itemize}

\item $\FPdim \ 12+6\alpha_5 \simeq 21.708$, type $[1,1,\alpha_5,\alpha_5,2,2\alpha_5]$, one fusion ring (\textnumero 23): 
$$ \normalsize{\left[ \begin{smallmatrix}1 & 0 & 0 & 0 & 0 & 0 \\ 0 & 1 & 0 & 0 & 0 & 0 \\ 0 & 0 & 1 & 0 & 0 & 0 \\ 0 & 0 & 0 & 1 & 0 & 0 \\ 0 & 0 & 0 & 0 & 1 & 0 \\ 0 & 0 & 0 & 0 & 0 & 1 \end{smallmatrix} \right], \ 
 \left[ \begin{smallmatrix} 0 & 1 & 0 & 0 & 0 & 0 \\ 1 & 0 & 0 & 0 & 0 & 0 \\ 0 & 0 & 0 & 1 & 0 & 0 \\ 0 & 0 & 1 & 0 & 0 & 0 \\ 0 & 0 & 0 & 0 & 1 & 0 \\ 0 & 0 & 0 & 0 & 0 & 1 \end{smallmatrix} \right], \ 
 \left[ \begin{smallmatrix} 0 & 0 & 1 & 0 & 0 & 0 \\ 0 & 0 & 0 & 1 & 0 & 0 \\ 1 & 0 & 1 & 0 & 0 & 0 \\ 0 & 1 & 0 & 1 & 0 & 0 \\ 0 & 0 & 0 & 0 & 0 & 1 \\ 0 & 0 & 0 & 0 & 1 & 1 \end{smallmatrix} \right], \ 
 \left[ \begin{smallmatrix} 0 & 0 & 0 & 1 & 0 & 0 \\ 0 & 0 & 1 & 0 & 0 & 0 \\ 0 & 1 & 0 & 1 & 0 & 0 \\ 1 & 0 & 1 & 0 & 0 & 0 \\ 0 & 0 & 0 & 0 & 0 & 1 \\ 0 & 0 & 0 & 0 & 1 & 1 \end{smallmatrix} \right], \ 
 \left[ \begin{smallmatrix} 0 & 0 & 0 & 0 & 1 & 0 \\ 0 & 0 & 0 & 0 & 1 & 0 \\ 0 & 0 & 0 & 0 & 0 & 1 \\ 0 & 0 & 0 & 0 & 0 & 1 \\ 1 & 1 & 0 & 0 & 1 & 0 \\ 0 & 0 & 1 & 1 & 0 & 1 \end{smallmatrix} \right], \ 
 \left[ \begin{smallmatrix} 0 & 0 & 0 & 0 & 0 & 1 \\ 0 & 0 & 0 & 0 & 0 & 1 \\ 0 & 0 & 0 & 0 & 1 & 1 \\ 0 & 0 & 0 & 0 & 1 & 1 \\ 0 & 0 & 1 & 1 & 0 & 1 \\ 1 & 1 & 1 & 1 & 1 & 1  \end{smallmatrix} \right]} $$ 
\begin{itemize}
\item Formal codegrees: $[(15+3\sqrt{5}, 1), ((15+3\sqrt{5})/2, 1), (15-3\sqrt{5}, 1), (5+\sqrt{5}, 1), ((15-3\sqrt{5})/2, 1), (-5-\sqrt{5}, 1)]$.
\item[$\star$] Properties: extension of $\VVec(C_2)$.
\item Model: $ \PSU(2)_3 \otimes \Rep(S_3)$.
\end{itemize}

\item $\FPdim \ 21+\sqrt{21} \simeq 25.583$, type $[1,1,2,2,(1+\sqrt{21})/2,(1+\sqrt{21})/2]$, one fusion ring (\textnumero 24): 
$$ \normalsize{\left[ \begin{smallmatrix}1 & 0 & 0 & 0 & 0 & 0 \\ 0 & 1 & 0 & 0 & 0 & 0 \\ 0 & 0 & 1 & 0 & 0 & 0 \\ 0 & 0 & 0 & 1 & 0 & 0 \\ 0 & 0 & 0 & 0 & 1 & 0 \\ 0 & 0 & 0 & 0 & 0 & 1 \end{smallmatrix} \right], \ 
 \left[ \begin{smallmatrix} 0 & 1 & 0 & 0 & 0 & 0 \\ 1 & 0 & 0 & 0 & 0 & 0 \\ 0 & 0 & 1 & 0 & 0 & 0 \\ 0 & 0 & 0 & 1 & 0 & 0 \\ 0 & 0 & 0 & 0 & 0 & 1 \\ 0 & 0 & 0 & 0 & 1 & 0 \end{smallmatrix} \right], \ 
 \left[ \begin{smallmatrix} 0 & 0 & 1 & 0 & 0 & 0 \\ 0 & 0 & 1 & 0 & 0 & 0 \\ 1 & 1 & 0 & 1 & 0 & 0 \\ 0 & 0 & 1 & 1 & 0 & 0 \\ 0 & 0 & 0 & 0 & 1 & 1 \\ 0 & 0 & 0 & 0 & 1 & 1 \end{smallmatrix} \right], \ 
 \left[ \begin{smallmatrix} 0 & 0 & 0 & 1 & 0 & 0 \\ 0 & 0 & 0 & 1 & 0 & 0 \\ 0 & 0 & 1 & 1 & 0 & 0 \\ 1 & 1 & 1 & 0 & 0 & 0 \\ 0 & 0 & 0 & 0 & 1 & 1 \\ 0 & 0 & 0 & 0 & 1 & 1 \end{smallmatrix} \right], \ 
 \left[ \begin{smallmatrix} 0 & 0 & 0 & 0 & 1 & 0 \\ 0 & 0 & 0 & 0 & 0 & 1 \\ 0 & 0 & 0 & 0 & 1 & 1 \\ 0 & 0 & 0 & 0 & 1 & 1 \\ 1 & 0 & 1 & 1 & 1 & 0 \\ 0 & 1 & 1 & 1 & 0 & 1 \end{smallmatrix} \right], \ 
 \left[ \begin{smallmatrix} 0 & 0 & 0 & 0 & 0 & 1 \\ 0 & 0 & 0 & 0 & 1 & 0 \\ 0 & 0 & 0 & 0 & 1 & 1 \\ 0 & 0 & 0 & 0 & 1 & 1 \\ 0 & 1 & 1 & 1 & 0 & 1 \\ 1 & 0 & 1 & 1 & 1 & 0  \end{smallmatrix} \right]} $$ 
\begin{itemize}
\item Formal codegrees: $[(21+\sqrt{21}, 1), (21-\sqrt{21}, 1), (5+\sqrt{5}, 1), (5, 2), (5-\sqrt{5}, 1)]$.
\item Properties: extension of $\Rep(D_5)$, non-Schur, non-d-number, non-Drinfeld, non-Lagrange. 
\end{itemize}

\item $\FPdim \ 18+6\alpha_6 \simeq 28.392$, type $[1, 1, 2, 1+\alpha_6, 1+\alpha_6, 1+\alpha_6]$, two fusion rings (\textnumero 25,26): 
$$ \normalsize{\left[ \begin{smallmatrix}1 & 0 & 0 & 0 & 0 & 0 \\ 0 & 1 & 0 & 0 & 0 & 0 \\ 0 & 0 & 1 & 0 & 0 & 0 \\ 0 & 0 & 0 & 1 & 0 & 0 \\ 0 & 0 & 0 & 0 & 1 & 0 \\ 0 & 0 & 0 & 0 & 0 & 1 \end{smallmatrix} \right], \ 
 \left[ \begin{smallmatrix} 0 & 1 & 0 & 0 & 0 & 0 \\ 1 & 0 & 0 & 0 & 0 & 0 \\ 0 & 0 & 1 & 0 & 0 & 0 \\ 0 & 0 & 0 & 1 & 0 & 0 \\ 0 & 0 & 0 & 0 & 1 & 0 \\ 0 & 0 & 0 & 0 & 0 & 1 \end{smallmatrix} \right], \ 
 \left[ \begin{smallmatrix} 0 & 0 & 1 & 0 & 0 & 0 \\ 0 & 0 & 1 & 0 & 0 & 0 \\ 1 & 1 & 1 & 0 & 0 & 0 \\ 0 & 0 & 0 & 0 & 1 & 1 \\ 0 & 0 & 0 & 1 & 0 & 1 \\ 0 & 0 & 0 & 1 & 1 & 0 \end{smallmatrix} \right], \ 
 \left[ \begin{smallmatrix} 0 & 0 & 0 & 1 & 0 & 0 \\ 0 & 0 & 0 & 1 & 0 & 0 \\ 0 & 0 & 0 & 0 & 1 & 1 \\ 1 & 1 & 0 & 1 & 1 & 0 \\ 0 & 0 & 1 & 1 & 0 & 1 \\ 0 & 0 & 1 & 0 & 1 & 1 \end{smallmatrix} \right], \ 
 \left[ \begin{smallmatrix} 0 & 0 & 0 & 0 & 1 & 0 \\ 0 & 0 & 0 & 0 & 1 & 0 \\ 0 & 0 & 0 & 1 & 0 & 1 \\ 0 & 0 & 1 & 1 & 0 & 1 \\ 1 & 1 & 0 & 0 & 1 & 1 \\ 0 & 0 & 1 & 1 & 1 & 0 \end{smallmatrix} \right], \ 
 \left[ \begin{smallmatrix} 0 & 0 & 0 & 0 & 0 & 1 \\ 0 & 0 & 0 & 0 & 0 & 1 \\ 0 & 0 & 0 & 1 & 1 & 0 \\ 0 & 0 & 1 & 0 & 1 & 1 \\ 0 & 0 & 1 & 1 & 1 & 0 \\ 1 & 1 & 0 & 1 & 0 & 1  \end{smallmatrix} \right]} $$ 
\begin{itemize}
\item Formal codegrees: $[(8+6\sqrt{3}, 1), (9, 3), (18-6\sqrt{3}, 1), (2, 1)]$.
\item Properties: extension of $\Rep(S_3)$, non-Schur, non-Drinfeld, non-Czero $(3, 3, 3, 2, 5, 4, 3, 4, 3)$.
\end{itemize}

$$ \normalsize{\left[ \begin{smallmatrix}1 & 0 & 0 & 0 & 0 & 0 \\ 0 & 1 & 0 & 0 & 0 & 0 \\ 0 & 0 & 1 & 0 & 0 & 0 \\ 0 & 0 & 0 & 1 & 0 & 0 \\ 0 & 0 & 0 & 0 & 1 & 0 \\ 0 & 0 & 0 & 0 & 0 & 1 \end{smallmatrix} \right], \ 
 \left[ \begin{smallmatrix} 0 & 1 & 0 & 0 & 0 & 0 \\ 1 & 0 & 0 & 0 & 0 & 0 \\ 0 & 0 & 1 & 0 & 0 & 0 \\ 0 & 0 & 0 & 1 & 0 & 0 \\ 0 & 0 & 0 & 0 & 1 & 0 \\ 0 & 0 & 0 & 0 & 0 & 1 \end{smallmatrix} \right], \ 
 \left[ \begin{smallmatrix} 0 & 0 & 1 & 0 & 0 & 0 \\ 0 & 0 & 1 & 0 & 0 & 0 \\ 1 & 1 & 1 & 0 & 0 & 0 \\ 0 & 0 & 0 & 0 & 1 & 1 \\ 0 & 0 & 0 & 1 & 0 & 1 \\ 0 & 0 & 0 & 1 & 1 & 0 \end{smallmatrix} \right], \ 
 \left[ \begin{smallmatrix} 0 & 0 & 0 & 1 & 0 & 0 \\ 0 & 0 & 0 & 1 & 0 & 0 \\ 0 & 0 & 0 & 0 & 1 & 1 \\ 1 & 1 & 0 & 0 & 1 & 1 \\ 0 & 0 & 1 & 1 & 1 & 0 \\ 0 & 0 & 1 & 1 & 0 & 1 \end{smallmatrix} \right], \ 
 \left[ \begin{smallmatrix} 0 & 0 & 0 & 0 & 1 & 0 \\ 0 & 0 & 0 & 0 & 1 & 0 \\ 0 & 0 & 0 & 1 & 0 & 1 \\ 0 & 0 & 1 & 1 & 1 & 0 \\ 1 & 1 & 0 & 1 & 0 & 1 \\ 0 & 0 & 1 & 0 & 1 & 1 \end{smallmatrix} \right], \ 
 \left[ \begin{smallmatrix} 0 & 0 & 0 & 0 & 0 & 1 \\ 0 & 0 & 0 & 0 & 0 & 1 \\ 0 & 0 & 0 & 1 & 1 & 0 \\ 0 & 0 & 1 & 1 & 0 & 1 \\ 0 & 0 & 1 & 0 & 1 & 1 \\ 1 & 1 & 0 & 1 & 1 & 0  \end{smallmatrix} \right]} $$ 
 \begin{itemize}
\item Formal codegrees: $[(8+6\sqrt{3}, 1), (9, 3), (18-6\sqrt{3}, 1), (2, 1)]$.
\item Properties: extension of $\Rep(S_3)$, non-Schur, non-Drinfeld, non-Czero $(3, 3, 4, 2, 5, 4, 4, 4, 3)$.
\end{itemize}

\item $\FPdim \ (\alpha_5+2)(\alpha_7^4-\alpha_7^2+1) \simeq 33.633$, type $[1, \alpha_5, \alpha_7, \alpha_7^2-1, \alpha_5 \alpha_7, \alpha_5 (\alpha_7^2-1)]$, one fusion ring (\textnumero 27): 
$$ \normalsize{\left[ \begin{smallmatrix}1 & 0 & 0 & 0 & 0 & 0 \\ 0 & 1 & 0 & 0 & 0 & 0 \\ 0 & 0 & 1 & 0 & 0 & 0 \\ 0 & 0 & 0 & 1 & 0 & 0 \\ 0 & 0 & 0 & 0 & 1 & 0 \\ 0 & 0 & 0 & 0 & 0 & 1 \end{smallmatrix} \right], \ 
 \left[ \begin{smallmatrix} 0 & 1 & 0 & 0 & 0 & 0 \\ 1 & 1 & 0 & 0 & 0 & 0 \\ 0 & 0 & 0 & 0 & 1 & 0 \\ 0 & 0 & 0 & 0 & 0 & 1 \\ 0 & 0 & 1 & 0 & 1 & 0 \\ 0 & 0 & 0 & 1 & 0 & 1 \end{smallmatrix} \right], \ 
 \left[ \begin{smallmatrix} 0 & 0 & 1 & 0 & 0 & 0 \\ 0 & 0 & 0 & 0 & 1 & 0 \\ 1 & 0 & 0 & 1 & 0 & 0 \\ 0 & 0 & 1 & 1 & 0 & 0 \\ 0 & 1 & 0 & 0 & 0 & 1 \\ 0 & 0 & 0 & 0 & 1 & 1 \end{smallmatrix} \right], \ 
 \left[ \begin{smallmatrix} 0 & 0 & 0 & 1 & 0 & 0 \\ 0 & 0 & 0 & 0 & 0 & 1 \\ 0 & 0 & 1 & 1 & 0 & 0 \\ 1 & 0 & 1 & 1 & 0 & 0 \\ 0 & 0 & 0 & 0 & 1 & 1 \\ 0 & 1 & 0 & 0 & 1 & 1 \end{smallmatrix} \right], \ 
 \left[ \begin{smallmatrix} 0 & 0 & 0 & 0 & 1 & 0 \\ 0 & 0 & 1 & 0 & 1 & 0 \\ 0 & 1 & 0 & 0 & 0 & 1 \\ 0 & 0 & 0 & 0 & 1 & 1 \\ 1 & 1 & 0 & 1 & 0 & 1 \\ 0 & 0 & 1 & 1 & 1 & 1 \end{smallmatrix} \right], \ 
 \left[ \begin{smallmatrix} 0 & 0 & 0 & 0 & 0 & 1 \\ 0 & 0 & 0 & 1 & 0 & 1 \\ 0 & 0 & 0 & 0 & 1 & 1 \\ 0 & 1 & 0 & 0 & 1 & 1 \\ 0 & 0 & 1 & 1 & 1 & 1 \\ 1 & 1 & 1 & 1 & 1 & 1  \end{smallmatrix} \right]} $$ 
\begin{itemize}
\item Formal codegrees $\simeq [(33.633, 1), (12.847, 1), (10.358, 1), (6.661, 1), (3.956, 1), (2.544, 1)]$ roots of $x^6 - 70x^5 + 1715x^4 - 19600x^3 + 111475x^2 - 300125x + 300125$.
\item[$\star$] Properties: perfect.
\item Model: $\PSU(2)_3 \otimes \PSU(2)_5$.
\end{itemize}

\item $\FPdim \ 24+4\sqrt{6} \simeq 33.798$, type $[1, 1, 2, 2,1+\sqrt{6},1+\sqrt{6}]$, two fusion rings (\textnumero 28,29): 

$$ \normalsize{\left[ \begin{smallmatrix}1 & 0 & 0 & 0 & 0 & 0 \\ 0 & 1 & 0 & 0 & 0 & 0 \\ 0 & 0 & 1 & 0 & 0 & 0 \\ 0 & 0 & 0 & 1 & 0 & 0 \\ 0 & 0 & 0 & 0 & 1 & 0 \\ 0 & 0 & 0 & 0 & 0 & 1 \end{smallmatrix} \right], \ 
 \left[ \begin{smallmatrix} 0 & 1 & 0 & 0 & 0 & 0 \\ 1 & 0 & 0 & 0 & 0 & 0 \\ 0 & 0 & 1 & 0 & 0 & 0 \\ 0 & 0 & 0 & 1 & 0 & 0 \\ 0 & 0 & 0 & 0 & 0 & 1 \\ 0 & 0 & 0 & 0 & 1 & 0 \end{smallmatrix} \right], \ 
 \left[ \begin{smallmatrix} 0 & 0 & 1 & 0 & 0 & 0 \\ 0 & 0 & 1 & 0 & 0 & 0 \\ 1 & 1 & 0 & 1 & 0 & 0 \\ 0 & 0 & 1 & 1 & 0 & 0 \\ 0 & 0 & 0 & 0 & 1 & 1 \\ 0 & 0 & 0 & 0 & 1 & 1 \end{smallmatrix} \right], \ 
 \left[ \begin{smallmatrix} 0 & 0 & 0 & 1 & 0 & 0 \\ 0 & 0 & 0 & 1 & 0 & 0 \\ 0 & 0 & 1 & 1 & 0 & 0 \\ 1 & 1 & 1 & 0 & 0 & 0 \\ 0 & 0 & 0 & 0 & 1 & 1 \\ 0 & 0 & 0 & 0 & 1 & 1 \end{smallmatrix} \right], \ 
 \left[ \begin{smallmatrix} 0 & 0 & 0 & 0 & 1 & 0 \\ 0 & 0 & 0 & 0 & 0 & 1 \\ 0 & 0 & 0 & 0 & 1 & 1 \\ 0 & 0 & 0 & 0 & 1 & 1 \\ 0 & 1 & 1 & 1 & 1 & 1 \\ 1 & 0 & 1 & 1 & 1 & 1 \end{smallmatrix} \right], \ 
 \left[ \begin{smallmatrix} 0 & 0 & 0 & 0 & 0 & 1 \\ 0 & 0 & 0 & 0 & 1 & 0 \\ 0 & 0 & 0 & 0 & 1 & 1 \\ 0 & 0 & 0 & 0 & 1 & 1 \\ 1 & 0 & 1 & 1 & 1 & 1 \\ 0 & 1 & 1 & 1 & 1 & 1  \end{smallmatrix} \right]} $$ 
 \begin{itemize}
\item Formal codegrees: $[(24+4\sqrt{6}, 1), (24-4\sqrt{6}, 1), (5, 2), (4, 2)]$.
\item Properties: extension of $\Rep(D_5)$, non-Lagrange, non-d-number, non-Drinfeld. 
\end{itemize}

$$ \normalsize{\left[ \begin{smallmatrix}1 & 0 & 0 & 0 & 0 & 0 \\ 0 & 1 & 0 & 0 & 0 & 0 \\ 0 & 0 & 1 & 0 & 0 & 0 \\ 0 & 0 & 0 & 1 & 0 & 0 \\ 0 & 0 & 0 & 0 & 1 & 0 \\ 0 & 0 & 0 & 0 & 0 & 1 \end{smallmatrix} \right], \ 
 \left[ \begin{smallmatrix} 0 & 1 & 0 & 0 & 0 & 0 \\ 1 & 0 & 0 & 0 & 0 & 0 \\ 0 & 0 & 1 & 0 & 0 & 0 \\ 0 & 0 & 0 & 1 & 0 & 0 \\ 0 & 0 & 0 & 0 & 0 & 1 \\ 0 & 0 & 0 & 0 & 1 & 0 \end{smallmatrix} \right], \ 
 \left[ \begin{smallmatrix} 0 & 0 & 1 & 0 & 0 & 0 \\ 0 & 0 & 1 & 0 & 0 & 0 \\ 1 & 1 & 0 & 1 & 0 & 0 \\ 0 & 0 & 1 & 1 & 0 & 0 \\ 0 & 0 & 0 & 0 & 1 & 1 \\ 0 & 0 & 0 & 0 & 1 & 1 \end{smallmatrix} \right], \ 
 \left[ \begin{smallmatrix} 0 & 0 & 0 & 1 & 0 & 0 \\ 0 & 0 & 0 & 1 & 0 & 0 \\ 0 & 0 & 1 & 1 & 0 & 0 \\ 1 & 1 & 1 & 0 & 0 & 0 \\ 0 & 0 & 0 & 0 & 1 & 1 \\ 0 & 0 & 0 & 0 & 1 & 1 \end{smallmatrix} \right], \ 
 \left[ \begin{smallmatrix} 0 & 0 & 0 & 0 & 1 & 0 \\ 0 & 0 & 0 & 0 & 0 & 1 \\ 0 & 0 & 0 & 0 & 1 & 1 \\ 0 & 0 & 0 & 0 & 1 & 1 \\ 1 & 0 & 1 & 1 & 1 & 1 \\ 0 & 1 & 1 & 1 & 1 & 1 \end{smallmatrix} \right], \ 
 \left[ \begin{smallmatrix} 0 & 0 & 0 & 0 & 0 & 1 \\ 0 & 0 & 0 & 0 & 1 & 0 \\ 0 & 0 & 0 & 0 & 1 & 1 \\ 0 & 0 & 0 & 0 & 1 & 1 \\ 0 & 1 & 1 & 1 & 1 & 1 \\ 1 & 0 & 1 & 1 & 1 & 1  \end{smallmatrix} \right]} $$ 
\begin{itemize}
\item Formal codegrees: $[(24+4\sqrt{6}, 1), (24-4\sqrt{6}, 1), (5, 2), (4, 2)]$.
\item Properties: extension of $\Rep(D_5)$, non-Lagrange, non-d-number, non-Drinfeld. 
\end{itemize}

\item $\FPdim \ (39+9\sqrt{13})/2 \simeq 35.725$, type $[1,1,1,(3+\sqrt{13})/2,(3+\sqrt{13)})/2,(3+\sqrt{13})/2]$, two fusion rings (\textnumero 30,31): 
$$ \normalsize{\left[ \begin{smallmatrix}1 & 0 & 0 & 0 & 0 & 0 \\ 0 & 1 & 0 & 0 & 0 & 0 \\ 0 & 0 & 1 & 0 & 0 & 0 \\ 0 & 0 & 0 & 1 & 0 & 0 \\ 0 & 0 & 0 & 0 & 1 & 0 \\ 0 & 0 & 0 & 0 & 0 & 1 \end{smallmatrix} \right], \ 
 \left[ \begin{smallmatrix} 0 & 1 & 0 & 0 & 0 & 0 \\ 0 & 0 & 1 & 0 & 0 & 0 \\ 1 & 0 & 0 & 0 & 0 & 0 \\ 0 & 0 & 0 & 0 & 0 & 1 \\ 0 & 0 & 0 & 1 & 0 & 0 \\ 0 & 0 & 0 & 0 & 1 & 0 \end{smallmatrix} \right], \ 
 \left[ \begin{smallmatrix} 0 & 0 & 1 & 0 & 0 & 0 \\ 1 & 0 & 0 & 0 & 0 & 0 \\ 0 & 1 & 0 & 0 & 0 & 0 \\ 0 & 0 & 0 & 0 & 1 & 0 \\ 0 & 0 & 0 & 0 & 0 & 1 \\ 0 & 0 & 0 & 1 & 0 & 0 \end{smallmatrix} \right], \ 
 \left[ \begin{smallmatrix} 0 & 0 & 0 & 1 & 0 & 0 \\ 0 & 0 & 0 & 0 & 0 & 1 \\ 0 & 0 & 0 & 0 & 1 & 0 \\ 0 & 1 & 0 & 1 & 1 & 1 \\ 1 & 0 & 0 & 1 & 1 & 1 \\ 0 & 0 & 1 & 1 & 1 & 1 \end{smallmatrix} \right], \ 
 \left[ \begin{smallmatrix} 0 & 0 & 0 & 0 & 1 & 0 \\ 0 & 0 & 0 & 1 & 0 & 0 \\ 0 & 0 & 0 & 0 & 0 & 1 \\ 1 & 0 & 0 & 1 & 1 & 1 \\ 0 & 0 & 1 & 1 & 1 & 1 \\ 0 & 1 & 0 & 1 & 1 & 1 \end{smallmatrix} \right], \ 
 \left[ \begin{smallmatrix} 0 & 0 & 0 & 0 & 0 & 1 \\ 0 & 0 & 0 & 0 & 1 & 0 \\ 0 & 0 & 0 & 1 & 0 & 0 \\ 0 & 0 & 1 & 1 & 1 & 1 \\ 0 & 1 & 0 & 1 & 1 & 1 \\ 1 & 0 & 0 & 1 & 1 & 1  \end{smallmatrix} \right]} $$ 
\begin{itemize}
\item Formal codegrees: $[((39+9\sqrt{13})/2, 1), (6, 4), ((39-9\sqrt{13})/2, 1), 1)]$.
\item Properties: quadratic $(C_3,1,1)$, non-Drinfeld.
\end{itemize}

$$ \normalsize{\left[ \begin{smallmatrix}1 & 0 & 0 & 0 & 0 & 0 \\ 0 & 1 & 0 & 0 & 0 & 0 \\ 0 & 0 & 1 & 0 & 0 & 0 \\ 0 & 0 & 0 & 1 & 0 & 0 \\ 0 & 0 & 0 & 0 & 1 & 0 \\ 0 & 0 & 0 & 0 & 0 & 1 \end{smallmatrix} \right], \ 
 \left[ \begin{smallmatrix} 0 & 1 & 0 & 0 & 0 & 0 \\ 0 & 0 & 1 & 0 & 0 & 0 \\ 1 & 0 & 0 & 0 & 0 & 0 \\ 0 & 0 & 0 & 0 & 1 & 0 \\ 0 & 0 & 0 & 0 & 0 & 1 \\ 0 & 0 & 0 & 1 & 0 & 0 \end{smallmatrix} \right], \ 
 \left[ \begin{smallmatrix} 0 & 0 & 1 & 0 & 0 & 0 \\ 1 & 0 & 0 & 0 & 0 & 0 \\ 0 & 1 & 0 & 0 & 0 & 0 \\ 0 & 0 & 0 & 0 & 0 & 1 \\ 0 & 0 & 0 & 1 & 0 & 0 \\ 0 & 0 & 0 & 0 & 1 & 0 \end{smallmatrix} \right], \ 
 \left[ \begin{smallmatrix} 0 & 0 & 0 & 1 & 0 & 0 \\ 0 & 0 & 0 & 0 & 0 & 1 \\ 0 & 0 & 0 & 0 & 1 & 0 \\ 1 & 0 & 0 & 1 & 1 & 1 \\ 0 & 0 & 1 & 1 & 1 & 1 \\ 0 & 1 & 0 & 1 & 1 & 1 \end{smallmatrix} \right], \ 
 \left[ \begin{smallmatrix} 0 & 0 & 0 & 0 & 1 & 0 \\ 0 & 0 & 0 & 1 & 0 & 0 \\ 0 & 0 & 0 & 0 & 0 & 1 \\ 0 & 1 & 0 & 1 & 1 & 1 \\ 1 & 0 & 0 & 1 & 1 & 1 \\ 0 & 0 & 1 & 1 & 1 & 1 \end{smallmatrix} \right], \ 
 \left[ \begin{smallmatrix} 0 & 0 & 0 & 0 & 0 & 1 \\ 0 & 0 & 0 & 0 & 1 & 0 \\ 0 & 0 & 0 & 1 & 0 & 0 \\ 0 & 0 & 1 & 1 & 1 & 1 \\ 0 & 1 & 0 & 1 & 1 & 1 \\ 1 & 0 & 0 & 1 & 1 & 1  \end{smallmatrix} \right]} $$ 
\begin{itemize}
\item[$\star$] Properties: quadratic $(C_3,-1,1)$, noncommutative.
\item Model: Haagerup $H_6$.
\end{itemize}

\item $\FPdim \ \simeq 36.779$, type $\simeq [1, 1, 2.709, 2.709, 3.170, 3.170]$, two fusion rings (\textnumero 32,33): 
$$ \normalsize{\left[ \begin{smallmatrix}1 & 0 & 0 & 0 & 0 & 0 \\ 0 & 1 & 0 & 0 & 0 & 0 \\ 0 & 0 & 1 & 0 & 0 & 0 \\ 0 & 0 & 0 & 1 & 0 & 0 \\ 0 & 0 & 0 & 0 & 1 & 0 \\ 0 & 0 & 0 & 0 & 0 & 1 \end{smallmatrix} \right], \ 
 \left[ \begin{smallmatrix} 0 & 1 & 0 & 0 & 0 & 0 \\ 1 & 0 & 0 & 0 & 0 & 0 \\ 0 & 0 & 0 & 1 & 0 & 0 \\ 0 & 0 & 1 & 0 & 0 & 0 \\ 0 & 0 & 0 & 0 & 0 & 1 \\ 0 & 0 & 0 & 0 & 1 & 0 \end{smallmatrix} \right], \ 
 \left[ \begin{smallmatrix} 0 & 0 & 1 & 0 & 0 & 0 \\ 0 & 0 & 0 & 1 & 0 & 0 \\ 1 & 0 & 0 & 0 & 1 & 1 \\ 0 & 1 & 0 & 0 & 1 & 1 \\ 0 & 0 & 1 & 1 & 1 & 0 \\ 0 & 0 & 1 & 1 & 0 & 1 \end{smallmatrix} \right], \ 
 \left[ \begin{smallmatrix} 0 & 0 & 0 & 1 & 0 & 0 \\ 0 & 0 & 1 & 0 & 0 & 0 \\ 0 & 1 & 0 & 0 & 1 & 1 \\ 1 & 0 & 0 & 0 & 1 & 1 \\ 0 & 0 & 1 & 1 & 0 & 1 \\ 0 & 0 & 1 & 1 & 1 & 0 \end{smallmatrix} \right], \ 
 \left[ \begin{smallmatrix} 0 & 0 & 0 & 0 & 1 & 0 \\ 0 & 0 & 0 & 0 & 0 & 1 \\ 0 & 0 & 1 & 1 & 1 & 0 \\ 0 & 0 & 1 & 1 & 0 & 1 \\ 0 & 1 & 0 & 1 & 1 & 1 \\ 1 & 0 & 1 & 0 & 1 & 1 \end{smallmatrix} \right], \ 
 \left[ \begin{smallmatrix} 0 & 0 & 0 & 0 & 0 & 1 \\ 0 & 0 & 0 & 0 & 1 & 0 \\ 0 & 0 & 1 & 1 & 0 & 1 \\ 0 & 0 & 1 & 1 & 1 & 0 \\ 1 & 0 & 1 & 0 & 1 & 1 \\ 0 & 1 & 0 & 1 & 1 & 1  \end{smallmatrix} \right]} $$ 
\begin{itemize}
\item Formal codegrees $\simeq [(36.779, 1), (12.682, 1), (8, 2), (4, 1), (2.538, 1)]$, roots of $x^6 - 72x^5 + 1760x^4 - 19936x^3 + 112768x^2 - 303104x + 303104$.
\item Properties: non-Schur, non-cyclo, non-Czero $(2, 4, 4, 2, 2, 4, 5, 2, 2)$.
\end{itemize}

$$ \normalsize{\left[ \begin{smallmatrix}1 & 0 & 0 & 0 & 0 & 0 \\ 0 & 1 & 0 & 0 & 0 & 0 \\ 0 & 0 & 1 & 0 & 0 & 0 \\ 0 & 0 & 0 & 1 & 0 & 0 \\ 0 & 0 & 0 & 0 & 1 & 0 \\ 0 & 0 & 0 & 0 & 0 & 1 \end{smallmatrix} \right], \ 
 \left[ \begin{smallmatrix} 0 & 1 & 0 & 0 & 0 & 0 \\ 1 & 0 & 0 & 0 & 0 & 0 \\ 0 & 0 & 0 & 1 & 0 & 0 \\ 0 & 0 & 1 & 0 & 0 & 0 \\ 0 & 0 & 0 & 0 & 0 & 1 \\ 0 & 0 & 0 & 0 & 1 & 0 \end{smallmatrix} \right], \ 
 \left[ \begin{smallmatrix} 0 & 0 & 1 & 0 & 0 & 0 \\ 0 & 0 & 0 & 1 & 0 & 0 \\ 1 & 0 & 0 & 0 & 1 & 1 \\ 0 & 1 & 0 & 0 & 1 & 1 \\ 0 & 0 & 1 & 1 & 1 & 0 \\ 0 & 0 & 1 & 1 & 0 & 1 \end{smallmatrix} \right], \ 
 \left[ \begin{smallmatrix} 0 & 0 & 0 & 1 & 0 & 0 \\ 0 & 0 & 1 & 0 & 0 & 0 \\ 0 & 1 & 0 & 0 & 1 & 1 \\ 1 & 0 & 0 & 0 & 1 & 1 \\ 0 & 0 & 1 & 1 & 0 & 1 \\ 0 & 0 & 1 & 1 & 1 & 0 \end{smallmatrix} \right], \ 
 \left[ \begin{smallmatrix} 0 & 0 & 0 & 0 & 1 & 0 \\ 0 & 0 & 0 & 0 & 0 & 1 \\ 0 & 0 & 1 & 1 & 1 & 0 \\ 0 & 0 & 1 & 1 & 0 & 1 \\ 1 & 0 & 1 & 0 & 1 & 1 \\ 0 & 1 & 0 & 1 & 1 & 1 \end{smallmatrix} \right], \ 
 \left[ \begin{smallmatrix} 0 & 0 & 0 & 0 & 0 & 1 \\ 0 & 0 & 0 & 0 & 1 & 0 \\ 0 & 0 & 1 & 1 & 0 & 1 \\ 0 & 0 & 1 & 1 & 1 & 0 \\ 0 & 1 & 0 & 1 & 1 & 1 \\ 1 & 0 & 1 & 0 & 1 & 1  \end{smallmatrix} \right]} $$ 
\begin{itemize}
\item Formal codegrees $\simeq [(36.779, 1), (12.682, 1), (8, 2), (4, 1), (2.538, 1)]$, roots of $x^6 - 72x^5 + 1760x^4 - 19936x^3 + 112768x^2 - 303104x + 303104$.
\item Properties: non-Schur, non-cyclo, non-Czero $(2, 4, 4, 2, 2, 4, 5, 3, 2)$.
\end{itemize}

\item $\FPdim \ 24+12\alpha_6 \simeq 44.785$, type $[1, 1, 1+\alpha_6 , 1+\alpha_6 , 2+\alpha_6 , 2+\alpha_6]$, one fusion ring (\textnumero 34): 
$$ \normalsize{\left[ \begin{smallmatrix}1 & 0 & 0 & 0 & 0 & 0 \\ 0 & 1 & 0 & 0 & 0 & 0 \\ 0 & 0 & 1 & 0 & 0 & 0 \\ 0 & 0 & 0 & 1 & 0 & 0 \\ 0 & 0 & 0 & 0 & 1 & 0 \\ 0 & 0 & 0 & 0 & 0 & 1 \end{smallmatrix} \right], \ 
 \left[ \begin{smallmatrix} 0 & 1 & 0 & 0 & 0 & 0 \\ 1 & 0 & 0 & 0 & 0 & 0 \\ 0 & 0 & 0 & 1 & 0 & 0 \\ 0 & 0 & 1 & 0 & 0 & 0 \\ 0 & 0 & 0 & 0 & 0 & 1 \\ 0 & 0 & 0 & 0 & 1 & 0 \end{smallmatrix} \right], \ 
 \left[ \begin{smallmatrix} 0 & 0 & 1 & 0 & 0 & 0 \\ 0 & 0 & 0 & 1 & 0 & 0 \\ 1 & 0 & 1 & 0 & 1 & 0 \\ 0 & 1 & 0 & 1 & 0 & 1 \\ 0 & 0 & 1 & 0 & 1 & 1 \\ 0 & 0 & 0 & 1 & 1 & 1 \end{smallmatrix} \right], \ 
 \left[ \begin{smallmatrix} 0 & 0 & 0 & 1 & 0 & 0 \\ 0 & 0 & 1 & 0 & 0 & 0 \\ 0 & 1 & 0 & 1 & 0 & 1 \\ 1 & 0 & 1 & 0 & 1 & 0 \\ 0 & 0 & 0 & 1 & 1 & 1 \\ 0 & 0 & 1 & 0 & 1 & 1 \end{smallmatrix} \right], \ 
 \left[ \begin{smallmatrix} 0 & 0 & 0 & 0 & 1 & 0 \\ 0 & 0 & 0 & 0 & 0 & 1 \\ 0 & 0 & 1 & 0 & 1 & 1 \\ 0 & 0 & 0 & 1 & 1 & 1 \\ 1 & 0 & 1 & 1 & 1 & 1 \\ 0 & 1 & 1 & 1 & 1 & 1 \end{smallmatrix} \right], \ 
 \left[ \begin{smallmatrix} 0 & 0 & 0 & 0 & 0 & 1 \\ 0 & 0 & 0 & 0 & 1 & 0 \\ 0 & 0 & 0 & 1 & 1 & 1 \\ 0 & 0 & 1 & 0 & 1 & 1 \\ 0 & 1 & 1 & 1 & 1 & 1 \\ 1 & 0 & 1 & 1 & 1 & 1  \end{smallmatrix} \right]} $$ 
\begin{itemize}
\item Formal codegrees: $[(24+12\sqrt{3}, 1), (12, 1), (6, 2), (4, 1), (24-12\sqrt{3}, 1)]$. 
\item[$\star$] Properties: extension of $\VVec(C_2)$.
\item Model: $\PSU(2)_{10}$.
\end{itemize}

\item $\FPdim \ \simeq 55.144$, type $\simeq [1, 1, 2.935, 3.681, 3.935, 3.935]$, two fusion rings (\textnumero 35,36): 
$$ \normalsize{\left[ \begin{smallmatrix}1 & 0 & 0 & 0 & 0 & 0 \\ 0 & 1 & 0 & 0 & 0 & 0 \\ 0 & 0 & 1 & 0 & 0 & 0 \\ 0 & 0 & 0 & 1 & 0 & 0 \\ 0 & 0 & 0 & 0 & 1 & 0 \\ 0 & 0 & 0 & 0 & 0 & 1 \end{smallmatrix} \right], \ 
 \left[ \begin{smallmatrix} 0 & 1 & 0 & 0 & 0 & 0 \\ 1 & 0 & 0 & 0 & 0 & 0 \\ 0 & 0 & 1 & 0 & 0 & 0 \\ 0 & 0 & 0 & 1 & 0 & 0 \\ 0 & 0 & 0 & 0 & 0 & 1 \\ 0 & 0 & 0 & 0 & 1 & 0 \end{smallmatrix} \right], \ 
 \left[ \begin{smallmatrix} 0 & 0 & 1 & 0 & 0 & 0 \\ 0 & 0 & 1 & 0 & 0 & 0 \\ 1 & 1 & 1 & 1 & 0 & 0 \\ 0 & 0 & 1 & 0 & 1 & 1 \\ 0 & 0 & 0 & 1 & 1 & 1 \\ 0 & 0 & 0 & 1 & 1 & 1 \end{smallmatrix} \right], \ 
 \left[ \begin{smallmatrix} 0 & 0 & 0 & 1 & 0 & 0 \\ 0 & 0 & 0 & 1 & 0 & 0 \\ 0 & 0 & 1 & 0 & 1 & 1 \\ 1 & 1 & 0 & 1 & 1 & 1 \\ 0 & 0 & 1 & 1 & 1 & 1 \\ 0 & 0 & 1 & 1 & 1 & 1 \end{smallmatrix} \right], \ 
 \left[ \begin{smallmatrix} 0 & 0 & 0 & 0 & 1 & 0 \\ 0 & 0 & 0 & 0 & 0 & 1 \\ 0 & 0 & 0 & 1 & 1 & 1 \\ 0 & 0 & 1 & 1 & 1 & 1 \\ 0 & 1 & 1 & 1 & 1 & 1 \\ 1 & 0 & 1 & 1 & 1 & 1 \end{smallmatrix} \right], \ 
 \left[ \begin{smallmatrix} 0 & 0 & 0 & 0 & 0 & 1 \\ 0 & 0 & 0 & 0 & 1 & 0 \\ 0 & 0 & 0 & 1 & 1 & 1 \\ 0 & 0 & 1 & 1 & 1 & 1 \\ 1 & 0 & 1 & 1 & 1 & 1 \\ 0 & 1 & 1 & 1 & 1 & 1  \end{smallmatrix} \right]} $$ 
\begin{itemize}
\item Formal codegrees $\simeq [(55.144, 1), (8, 1), (7.313, 1), (4.543, 1), (4, 2)]$, roots of $x^6 - 83x^5 + 1839x^4 - 18312x^3 + 92848x^2 - 234496x + 234496$.
\item Properties: extension of $\VVec(C_2)$, non-Schur, non-cyclo, non-Czero $(2, 2, 3, 2, 2, 2, 3, 4, 2)$.
\end{itemize}

$$ \normalsize{\left[ \begin{smallmatrix}1 & 0 & 0 & 0 & 0 & 0 \\ 0 & 1 & 0 & 0 & 0 & 0 \\ 0 & 0 & 1 & 0 & 0 & 0 \\ 0 & 0 & 0 & 1 & 0 & 0 \\ 0 & 0 & 0 & 0 & 1 & 0 \\ 0 & 0 & 0 & 0 & 0 & 1 \end{smallmatrix} \right], \ 
 \left[ \begin{smallmatrix} 0 & 1 & 0 & 0 & 0 & 0 \\ 1 & 0 & 0 & 0 & 0 & 0 \\ 0 & 0 & 1 & 0 & 0 & 0 \\ 0 & 0 & 0 & 1 & 0 & 0 \\ 0 & 0 & 0 & 0 & 0 & 1 \\ 0 & 0 & 0 & 0 & 1 & 0 \end{smallmatrix} \right], \ 
 \left[ \begin{smallmatrix} 0 & 0 & 1 & 0 & 0 & 0 \\ 0 & 0 & 1 & 0 & 0 & 0 \\ 1 & 1 & 1 & 1 & 0 & 0 \\ 0 & 0 & 1 & 0 & 1 & 1 \\ 0 & 0 & 0 & 1 & 1 & 1 \\ 0 & 0 & 0 & 1 & 1 & 1 \end{smallmatrix} \right], \ 
 \left[ \begin{smallmatrix} 0 & 0 & 0 & 1 & 0 & 0 \\ 0 & 0 & 0 & 1 & 0 & 0 \\ 0 & 0 & 1 & 0 & 1 & 1 \\ 1 & 1 & 0 & 1 & 1 & 1 \\ 0 & 0 & 1 & 1 & 1 & 1 \\ 0 & 0 & 1 & 1 & 1 & 1 \end{smallmatrix} \right], \ 
 \left[ \begin{smallmatrix} 0 & 0 & 0 & 0 & 1 & 0 \\ 0 & 0 & 0 & 0 & 0 & 1 \\ 0 & 0 & 0 & 1 & 1 & 1 \\ 0 & 0 & 1 & 1 & 1 & 1 \\ 1 & 0 & 1 & 1 & 1 & 1 \\ 0 & 1 & 1 & 1 & 1 & 1 \end{smallmatrix} \right], \ 
 \left[ \begin{smallmatrix} 0 & 0 & 0 & 0 & 0 & 1 \\ 0 & 0 & 0 & 0 & 1 & 0 \\ 0 & 0 & 0 & 1 & 1 & 1 \\ 0 & 0 & 1 & 1 & 1 & 1 \\ 0 & 1 & 1 & 1 & 1 & 1 \\ 1 & 0 & 1 & 1 & 1 & 1  \end{smallmatrix} \right]} $$ 
\begin{itemize}
\item Formal codegrees $\simeq [(55.144, 1), (8, 1), (7.313, 1), (4.543, 1), (4, 2)]$, roots of $x^6 - 83x^5 + 1839x^4 - 18312x^3 + 92848x^2 - 234496x + 234496$.
\item Properties: extension of $\VVec(C_2)$, non-Schur, non-cyclo, non-Czero $(2, 2, 3, 2, 2, 2, 3, 4, 2)$.
\end{itemize}

\item $\FPdim \ \alpha_{13}^{10} - 7\alpha_{13}^8 + 17\alpha_{13}^6 - 16\alpha_{13}^4 + 6\alpha_{13}^2 + 3 \simeq 56.747$, type $[1,\alpha_{13},\alpha_{13}^2-1,\alpha_{13}^3-2\alpha_{13},\alpha_{13}^4-3\alpha_{13}^2+1, \alpha_{13}^5-4\alpha_{13}^3+3\alpha_{13}]$, one fusion ring (\textnumero 37): 
$$ \normalsize{\left[ \begin{smallmatrix}1 & 0 & 0 & 0 & 0 & 0 \\ 0 & 1 & 0 & 0 & 0 & 0 \\ 0 & 0 & 1 & 0 & 0 & 0 \\ 0 & 0 & 0 & 1 & 0 & 0 \\ 0 & 0 & 0 & 0 & 1 & 0 \\ 0 & 0 & 0 & 0 & 0 & 1 \end{smallmatrix} \right], \ 
 \left[ \begin{smallmatrix} 0 & 1 & 0 & 0 & 0 & 0 \\ 1 & 0 & 1 & 0 & 0 & 0 \\ 0 & 1 & 0 & 1 & 0 & 0 \\ 0 & 0 & 1 & 0 & 1 & 0 \\ 0 & 0 & 0 & 1 & 0 & 1 \\ 0 & 0 & 0 & 0 & 1 & 1 \end{smallmatrix} \right], \ 
 \left[ \begin{smallmatrix} 0 & 0 & 1 & 0 & 0 & 0 \\ 0 & 1 & 0 & 1 & 0 & 0 \\ 1 & 0 & 1 & 0 & 1 & 0 \\ 0 & 1 & 0 & 1 & 0 & 1 \\ 0 & 0 & 1 & 0 & 1 & 1 \\ 0 & 0 & 0 & 1 & 1 & 1 \end{smallmatrix} \right], \ 
 \left[ \begin{smallmatrix} 0 & 0 & 0 & 1 & 0 & 0 \\ 0 & 0 & 1 & 0 & 1 & 0 \\ 0 & 1 & 0 & 1 & 0 & 1 \\ 1 & 0 & 1 & 0 & 1 & 1 \\ 0 & 1 & 0 & 1 & 1 & 1 \\ 0 & 0 & 1 & 1 & 1 & 1 \end{smallmatrix} \right], \ 
 \left[ \begin{smallmatrix} 0 & 0 & 0 & 0 & 1 & 0 \\ 0 & 0 & 0 & 1 & 0 & 1 \\ 0 & 0 & 1 & 0 & 1 & 1 \\ 0 & 1 & 0 & 1 & 1 & 1 \\ 1 & 0 & 1 & 1 & 1 & 1 \\ 0 & 1 & 1 & 1 & 1 & 1 \end{smallmatrix} \right], \ 
 \left[ \begin{smallmatrix} 0 & 0 & 0 & 0 & 0 & 1 \\ 0 & 0 & 0 & 0 & 1 & 1 \\ 0 & 0 & 0 & 1 & 1 & 1 \\ 0 & 0 & 1 & 1 & 1 & 1 \\ 0 & 1 & 1 & 1 & 1 & 1 \\ 1 & 1 & 1 & 1 & 1 & 1  \end{smallmatrix} \right]} $$ 
\begin{itemize}
\item Formal codegrees $\simeq [(56.747, 1), (15.049, 1), (7.391, 1), (4.799, 1), (3.717, 1), (3.298, 1)]$ roots of $x^6 - 91x^5 + 2366x^4 - 26364x^3 + 142805x^2 - 371293x + 371293$.
\item[$\star$] Properties: simple.
\item Model: $\PSU(2)_{11}$.
\end{itemize}

\item $\FPdim \ (65+17\sqrt{13})/2 \simeq 63.147$, type $[1,(\sqrt{13}+3)/2,(\sqrt{13}+3)/2,(\sqrt{13}+3)/2,(\sqrt{13}+3)/2,(\sqrt{13}+5)/2]$, two fusion rings (\textnumero 38,39): 
$$ \normalsize{\left[ \begin{smallmatrix}1 & 0 & 0 & 0 & 0 & 0 \\ 0 & 1 & 0 & 0 & 0 & 0 \\ 0 & 0 & 1 & 0 & 0 & 0 \\ 0 & 0 & 0 & 1 & 0 & 0 \\ 0 & 0 & 0 & 0 & 1 & 0 \\ 0 & 0 & 0 & 0 & 0 & 1 \end{smallmatrix} \right], \ 
 \left[ \begin{smallmatrix} 0 & 1 & 0 & 0 & 0 & 0 \\ 1 & 1 & 1 & 1 & 0 & 0 \\ 0 & 1 & 1 & 0 & 0 & 1 \\ 0 & 1 & 0 & 0 & 1 & 1 \\ 0 & 0 & 0 & 1 & 1 & 1 \\ 0 & 0 & 1 & 1 & 1 & 1 \end{smallmatrix} \right], \ 
 \left[ \begin{smallmatrix} 0 & 0 & 1 & 0 & 0 & 0 \\ 0 & 1 & 1 & 0 & 0 & 1 \\ 1 & 1 & 0 & 1 & 1 & 0 \\ 0 & 0 & 1 & 1 & 0 & 1 \\ 0 & 0 & 1 & 0 & 1 & 1 \\ 0 & 1 & 0 & 1 & 1 & 1 \end{smallmatrix} \right], \ 
 \left[ \begin{smallmatrix} 0 & 0 & 0 & 1 & 0 & 0 \\ 0 & 1 & 0 & 0 & 1 & 1 \\ 0 & 0 & 1 & 1 & 0 & 1 \\ 1 & 0 & 1 & 1 & 1 & 0 \\ 0 & 1 & 0 & 1 & 0 & 1 \\ 0 & 1 & 1 & 0 & 1 & 1 \end{smallmatrix} \right], \ 
 \left[ \begin{smallmatrix} 0 & 0 & 0 & 0 & 1 & 0 \\ 0 & 0 & 0 & 1 & 1 & 1 \\ 0 & 0 & 1 & 0 & 1 & 1 \\ 0 & 1 & 0 & 1 & 0 & 1 \\ 1 & 1 & 1 & 0 & 1 & 0 \\ 0 & 1 & 1 & 1 & 0 & 1 \end{smallmatrix} \right], \ 
 \left[ \begin{smallmatrix} 0 & 0 & 0 & 0 & 0 & 1 \\ 0 & 0 & 1 & 1 & 1 & 1 \\ 0 & 1 & 0 & 1 & 1 & 1 \\ 0 & 1 & 1 & 0 & 1 & 1 \\ 0 & 1 & 1 & 1 & 0 & 1 \\ 1 & 1 & 1 & 1 & 1 & 1  \end{smallmatrix} \right]} $$ 
\begin{itemize}
\item Formal codegrees: $[((65+17\sqrt{13})/2, 1), (9, 4), ((65-17\sqrt{13})/2, 1)]$.
\item Properties: simple, non-Schur, cyclo but not of Frobenius type, non-d-number, non-Drinfeld, non-Czero $(1, 1, 3, 1, 1, 1, 3, 4, 1)$.
\end{itemize}

$$ \normalsize{\left[ \begin{smallmatrix}1 & 0 & 0 & 0 & 0 & 0 \\ 0 & 1 & 0 & 0 & 0 & 0 \\ 0 & 0 & 1 & 0 & 0 & 0 \\ 0 & 0 & 0 & 1 & 0 & 0 \\ 0 & 0 & 0 & 0 & 1 & 0 \\ 0 & 0 & 0 & 0 & 0 & 1 \end{smallmatrix} \right], \ 
 \left[ \begin{smallmatrix} 0 & 1 & 0 & 0 & 0 & 0 \\ 1 & 0 & 1 & 1 & 1 & 0 \\ 0 & 1 & 1 & 0 & 0 & 1 \\ 0 & 1 & 0 & 1 & 0 & 1 \\ 0 & 1 & 0 & 0 & 1 & 1 \\ 0 & 0 & 1 & 1 & 1 & 1 \end{smallmatrix} \right], \ 
 \left[ \begin{smallmatrix} 0 & 0 & 1 & 0 & 0 & 0 \\ 0 & 1 & 1 & 0 & 0 & 1 \\ 1 & 1 & 0 & 1 & 1 & 0 \\ 0 & 0 & 1 & 1 & 0 & 1 \\ 0 & 0 & 1 & 0 & 1 & 1 \\ 0 & 1 & 0 & 1 & 1 & 1 \end{smallmatrix} \right], \ 
 \left[ \begin{smallmatrix} 0 & 0 & 0 & 1 & 0 & 0 \\ 0 & 1 & 0 & 1 & 0 & 1 \\ 0 & 0 & 1 & 1 & 0 & 1 \\ 1 & 1 & 1 & 0 & 1 & 0 \\ 0 & 0 & 0 & 1 & 1 & 1 \\ 0 & 1 & 1 & 0 & 1 & 1 \end{smallmatrix} \right], \ 
 \left[ \begin{smallmatrix} 0 & 0 & 0 & 0 & 1 & 0 \\ 0 & 1 & 0 & 0 & 1 & 1 \\ 0 & 0 & 1 & 0 & 1 & 1 \\ 0 & 0 & 0 & 1 & 1 & 1 \\ 1 & 1 & 1 & 1 & 0 & 0 \\ 0 & 1 & 1 & 1 & 0 & 1 \end{smallmatrix} \right], \ 
 \left[ \begin{smallmatrix} 0 & 0 & 0 & 0 & 0 & 1 \\ 0 & 0 & 1 & 1 & 1 & 1 \\ 0 & 1 & 0 & 1 & 1 & 1 \\ 0 & 1 & 1 & 0 & 1 & 1 \\ 0 & 1 & 1 & 1 & 0 & 1 \\ 1 & 1 & 1 & 1 & 1 & 1  \end{smallmatrix} \right]} $$ 
\begin{itemize}
\item Formal codegrees: $[((65+17\sqrt{13})/2, 1), (9, 4), ((65-17\sqrt{13})/2, 1)]$.
\item Properties: simple, non-Schur, cyclo but not of Frobenius type, non-d-number, non-Drinfeld.
\end{itemize}
\end{itemize}

\subsection{Proof of Theorem \ref{thm:main}} \label{sub:proof}

A fusion ring is given by its fusion coefficients $N_{i,j}^k \in \mathbb{Z}_{\ge 0}$, so at rank $r$ and multiplicity $m$, there are $r^3$ variables and so $(m+1)^{r^3}$ possibilities to check (e.g. $2^{216}$ ones if $r=6$ and $m=1$). Fortunately, we can drastically reduce the number of variables by using the axioms of fusion rings reformulated in term of $N_{i,j}^k$:

\begin{itemize}
\item Associativity: $\sum_s N_{ij}^sN_{sk}^t = \sum_s N_{jk}^sN_{is}^t$,
\item Neutral: $N_{1i}^j = N_{i1}^j = \delta_{ij}$,
\item Dual: $N_{i^*,k}^{1} = N_{k,i^*}^{1} = \delta_{i,k}$,
\item Frobenius reciprocity: $N_{ij}^k = N_{i^*k}^j = N_{kj^*}^i$
\end{itemize}
where $i \mapsto i^*$ is a dual structure fixed beforehand (there are few ones up to equivalence). Then we can get the $72$ fusion rings listed above in a reasonable time. The SageMath code (too long for this paper) is available in \cite{FusionAtlas}, together with the data. Now, let us provide the list of fusion rings which do not pass a given criterion:

$$\begin{array}{c|c|c|c|c}
                     & \text{criterion type}& \text{\textnumero~at  rank } 4 & \text{\textnumero~at  rank } 5   & \text{\textnumero~at  rank } 6 \\ \hline
\text{non-Schur}     & \text{unitary} &  &   9,13 &   22,24,25,26,32,33,35,36,38,39  \\ \hline
\text{non-Drinfeld}  & \text{complex pivotal} &  5  &   4,5,9,13  &   11,16,17,18,19,22,24,25,26,28,29,30,38,39  \\ \hline
\text{non-d-number}  & \text{complex} &   5  &   4,5,9,13  &    11,24,28,29,38,39 \\ \hline
\text{non-cyclo} 	 & \text{complex}   &  &   14,15 &   32,33,35,36  \\ \hline
\text{non-Lagrange}  & \text{general} & 5 &   4,5,9  &   11,24,28,29  \\ \hline
\text{non-Czero}     & \text{general} &   &   9,13  &   25,26,32,33,35,36,38
\end{array}$$

Observe that all the excluded fusion rings are non-Drinfeld, non-d-number or non-cyclo, which means that the corresponding three criteria are sufficient for proving Theorem \ref{thm:main} (see their SageMath codes in Section \ref{sec:code}). We still mentioned the three other criteria because they should be useful in the future (for example, see Subsection \ref{sub:pos}). The \textnumero $16,17,18,19,22,30$ at rank $6$ are only excluded from pivotal complex categorification (non-Drinfeld) but they are all quadratic, so excluded from (general) complex categorification by Theorem \ref{thm:quadratic}. The non-excluded fusion rings are exactly the Grothendieck rings listed in Theorem \ref{thm:main}, the new ones being categorified in Section \ref{sec:tpe}. \qed

\section{Unitary categorification of the new complex Grothendieck rings}  \label{sec:tpe}

In this section, we consider the new complex Grothendieck rings of multiplicity one up to rank $6$, and provide some unitary solutions of their Pentagon Equation (PE).

\subsection{The Pentagon Equation in multiplicity one}
In this subsection, every hom-space $hom_{\mathcal{C}}(X_i \otimes X_j, X_k)$ is assumed to be of dimension $N_{i,j}^k \le 1$, so that every morphism in it is completely determined by $i,j$ and $k$, up to a multiplicative constant, which makes the PE much easier to deal with.
%
\begin{figure}
\scalebox{.7}{
\begin{tikzpicture}
\begin{scope}[scale=1.3]
\begin{scope}
\node at (0-.2,0) {$i_5$};
\node at (1-.2,0) {$i_4$};
\node at (2-.2,0) {$i_7$};
\node at (3-.2,0) {$i_9$};
\node at (1.75-.2,-.75) {$\spec$};
\node at (1.75+.2,-1.25) {$i_6$};
\node at (1.5-.2,-1.75) {$i_3$};
\node at (1.5,-.5+.2) {};
\node at (2,-1+.2) {};
\node at (1.5,-1.5+.2) {};
\draw (0,0) --++ (1.5,-1.5)--++(0,-.5);
\draw (1,0) --++ (1,-1);
\draw (2,0) --++ (-.5,-.5);
\draw (3,0) --++ (-1.5,-1.5);
\end{scope}
\begin{scope}[shift={(5,0)}]
\node at (0-.2,0) {$i_5$};
\node at (1-.2,0) {$i_4$};
\node at (2-.2,0) {$i_7$};
\node at (3-.2,0) {$i_9$};
\node at (2.25+.2,-.75) {$i_1$};
\node at (1.75+.2,-1.25) {$i_6$};
\node at (1.5-.2,-1.75) {$i_3$};
\node at (2.5,-.5+.2) {};
\node at (2,-1+.2) {};
\node at (1.5,-1.5+.2) {};
\draw (0,0) --++ (1.5,-1.5)--++(0,-.5);
\draw (1,0) --++ (1,-1);
\draw (2,0) --++ (.5,-.5);
\draw (3,0) --++ (-1.5,-1.5);
\end{scope}
\begin{scope}[shift={(-2.5,-3)}]
\node at (0-.2,0) {$i_5$};
\node at (1-.2,0) {$i_4$};
\node at (2-.2,0) {$i_7$};
\node at (3-.2,0) {$i_9$};
\node at (1.25+.2,-.75) {$\spec$};
\node at (1.25-.2,-1.25) {$i_8$};
\node at (1.5-.2,-1.75) {$i_3$};
\node at (1.5,-.5+.2) {};
\node at (1,-1+.2) {};
\node at (1.5,-1.5+.2) {};
\draw (0,0) --++ (1.5,-1.5)--++(0,-.5);
\draw (1,0) --++ (.5,-.5);
\draw (2,0) --++ (-1,-1);
\draw (3,0) --++ (-1.5,-1.5);
\end{scope}
\begin{scope}[shift={(7.5,-3)}]
\node at (0-.2,0) {$i_5$};
\node at (1-.2,0) {$i_4$};
\node at (2-.2,0) {$i_7$};
\node at (3-.2,0) {$i_9$};
\node at (.75-.2,-.75) {$i_2$};
\node at (2.25-.2,-.75) {$i_1$};
\node at (1.5-.2,-1.75) {$i_3$};
\node at (.5,-.5+.2) {};
\node at (2.5,-.5+.2) {};
\node at (1.5,-1.5+.2) {};
\draw (0,0) --++ (1.5,-1.5)--++(0,-.5);
\draw (1,0) --++ (-.5,-.5);
\draw (2,0) --++ (.5,-.5);
\draw (3,0) --++ (-1.5,-1.5);
\end{scope}
\begin{scope}[shift={(2.5,-5)}]
\node at (0-.2,0) {$i_5$};
\node at (1-.2,0) {$i_4$};
\node at (2-.2,0) {$i_7$};
\node at (3-.2,0) {$i_9$};
\node at (.75-.2,-.75) {$i_2$};
\node at (1.25-.2,-1.25) {$i_8$};
\node at (1.5-.2,-1.75) {$i_3$};
\node at (.5,-.5+.2) {};
\node at (1,-1+.2) {};
\node at (1.5,-1.5+.2) {};
\draw (0,0) --++ (1.5,-1.5)--++(0,-.5);
\draw (1,0) --++ (-.5,-.5);
\draw (2,0) --++ (-1,-1);
\draw (3,0) --++ (-1.5,-1.5);
\end{scope}
\draw [red, thick,->] (2.4,-6.5)--(-1,-5.2);
\draw [red, thick,->] (-1,-2.5)--(1,-1.5);
\draw [red, thick,->] (3.2,-.75)--(5,-.75);
\draw [blue, thick,->] (9,-2.5)--(7,-1.5);
\draw [blue, thick,->] (5.6,-6.5)--(9,-5.2);
\end{scope}
\end{tikzpicture}}
\caption{Pentagon Equation in multiplicity one}
\label{Fig: Pentagon Equation}
\end{figure}
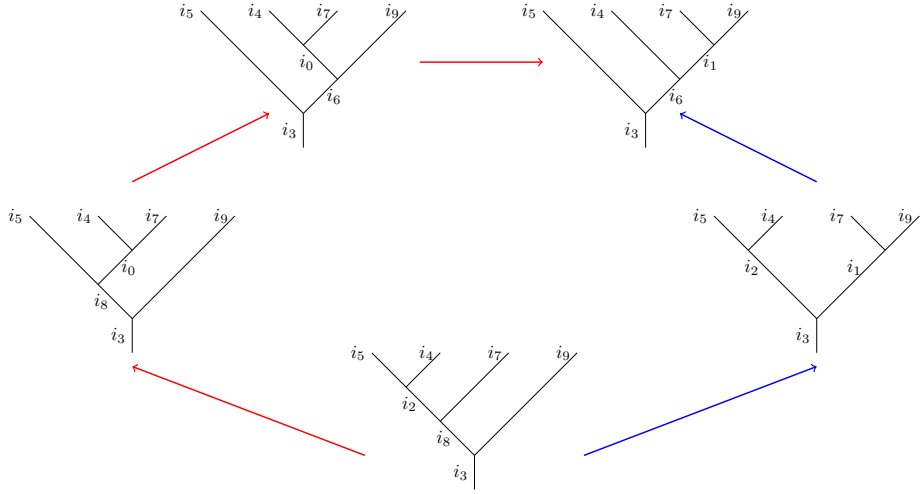
The F-symbols are defined as follows:
\begin{equation} \label{Equ: F-symbols}
\scalebox{.7}{
\raisebox{-1.25cm}{
\begin{tikzpicture}
\begin{scope}[scale=1.25]
\node at (0-.2,0) {$i_1$};
\node at (1-.2,0) {$i_2$};
\node at (2-.2,0) {$i_4$};
\node at (.75-.2,-.75) {$i_3$};
\node at (1-.2,-1.5) {$i_5$};
\draw (0,0)--++(1,-1);
\draw (1,0)--++(-.5,-.5);
\draw (2,0)--++(-1,-1)--++(0,-.5);
\end{scope}
\end{tikzpicture}}}
=
\sum_{i_6}
\left(
\begin{array}{ccc}
i_1& i_2 & i_3\\
i_4& i_5 & i_6 
\end{array}
\right)_F
%
\scalebox{.7}{
\raisebox{-1.25cm}{
\begin{tikzpicture}
\begin{scope}[scale=1.25]
\node at (0-.2,0) {$i_1$};
\node at (1-.2,0) {$i_2$};
\node at (2-.2,0) {$i_4$};
\node at (1.25+.2,-.75) {$i_6$};
\node at (1-.2,-1.5) {$i_5$};
\draw (0,0)--++(1,-1);
\draw (1,0)--++(.5,-.5);
\draw (2,0)--++(-1,-1)--++(0,-.5);
\end{scope}
\end{tikzpicture}}}.
\end{equation}
They satisfy the PE, Figure~\ref{Fig: Pentagon Equation} being a pictorial representation \cite{wang}, with the following algebraic reformulation:
\begin{equation}\label{Equ: Pentagon Equation}
\left(
\begin{array}{ccc}
i_2& i_7 & i_8  \\
i_9& i_3 & i_1 
\end{array}
\right)_F
\left(
\begin{array}{ccc}
i_5& i_4 & i_2 \\
i_1& i_3 & i_6
\end{array}
\right)_F
= 
\sum_{\spec}
\left(
\begin{array}{ccc }
i_5& i_4 & i_2  \\
i_7& i_8 & \spec 
\end{array}
\right)_F
\left(
\begin{array}{ccc }
i_5& \spec & i_8 \\
i_9& i_3 & i_6 
\end{array}
\right)_F
\left(
\begin{array}{ccc}
i_4& i_7 & \spec \\
i_9& i_6 & i_1 
\end{array}
\right)_F
\end{equation}
Let $d_i := \dim_{\mathcal{C}}(X_i)$. By using the following notation: 
\begin{equation*} 
\left(
\begin{array}{ccc}
i_1& i_2 & i_3  \\
i_4& i_5 & i_6 
\end{array}
\right)
:=d_{i_6}^{-1}
\left(
\begin{array}{ccc}
i_1& i_2 & i_3  \\
i_4& i_5 & i_6 
\end{array}
\right)_F
\end{equation*}
the PE becomes:
\begin{equation}
\left(
\begin{array}{ccc}
i_2& i_7 & i_8  \\
i_9& i_3 & i_1 
\end{array}
\right)
\left(
\begin{array}{ccc}
i_5& i_4 & i_2 \\
i_1& i_3 & i_6
\end{array}
\right)
=
\sum_{\spec} d_{\spec}
\left(
\begin{array}{ccc }
i_5& i_4 & i_2  \\
i_7& i_8 & \spec 
\end{array}
\right)
\left(
\begin{array}{ccc }
i_5& \spec & i_8 \\
i_9& i_3 & i_6 
\end{array}
\right)
\left(
\begin{array}{ccc}
i_4& i_7 & \spec \\
i_9& i_6 & i_1 
\end{array}
\right).
\end{equation}

\begin{proposition}[Pivotal axioms] \label{prop:pivaxiom}
Let $\mathcal{C}$ be a fusion category with above PE. If there are roots of unity $(t_i)$ such that: 
\begin{itemize}
\item $t_1 = 1$,
\item $t_{i^*} = t_i^{-1}$,
\item $t_i^{-1}t_j^{-1}t_k = d_{i^*} d_{j^*} d_{k}
\left(
\begin{array}{ccc }
i & j & k  \\
k^*& 1 & i^* 
\end{array}
\right) 
\left(
\begin{array}{ccc }
j& k^* & i^*  \\
i& 1 & j^* 
\end{array}
\right)
\left(
\begin{array}{ccc }
k^* & i & j^*  \\
j& 1 & k 
\end{array}
\right)$, 
$\forall i,j,k$ with $N_{i,j}^k \neq 0$, 
\end{itemize}
then $\mathcal{C}$ is pivotal, and $(t_i)$ are called the \emph{pivotal coefficients}. If moreover all $t_i = \pm 1$ then $\mathcal{C}$ is spherical.
\end{proposition}
\begin{proof}
It is a reformulation of \cite[Proposition 4.16 (1)(2)]{wang}, where
$ F_{d;n,m}^{a,b,c}  = \left(
\begin{array}{ccc }
a& b & m  \\
c& d & n 
\end{array}
\right)_F   = d_n \left(
\begin{array}{ccc }
a& b & m  \\
c& d & n 
\end{array}
\right).$
\end{proof}
\begin{corollary} \label{cor:assump}
A solution of the PE under below (a),(b),(c) gives a pseudo-unitary categorification of the fusion ring.  
\begin{itemize}
\item[(a)] $d_i=\FPdim(X_i),$ 
\item[(b)] evaluation when one object is trivial:
\[
\left(
\begin{array}{ccc}
i_1 & i_2 & i_3  \\
1 & i_3 & i_2 
\end{array}
\right)
=d_{i_2}^{-1/2}d_{i_3}^{-1/2} \;,
\]
\item[(c)] $A_4$-symmetry:
\[
\left(
\begin{array}{ccc}
i_1 & i_2 & i_3  \\
i_4 & i_5 & i_6 
\end{array}
\right)=
\left(
\begin{array}{ccc}
i_2& i_3^* & i_1^*  \\
i_5& i_6 & i_4 
\end{array}
\right)
=
\left(
\begin{array}{ccc}
i_2 & i_6 & i_4  \\
i_5^* & i_3^*& i_1^* 
\end{array}
\right) \;.
\]
\end{itemize}

The point (c) is realized as the (orientation-preserving) symmetry of the following tetrahedron, assuming the 2nd and 3rd Frobenius-Schur indicators to be trivial.
$$
\left(
\begin{array}{ccc}
i_1& i_2 & i_3  \\
i_4& i_5 & i_6 
\end{array}
\right)=
\tetra{i_1}{i_2}{i_3}{i_4}{i_5}{i_6}
$$

Furthermore, if the reflection of the tetrahedron is its complex-conjugate, in other words,
$$
\left(
\begin{array}{ccc}
i_1^* & i_3 & i_2  \\
i_4 & i_6 & i_5 
\end{array}
\right)=
\overline{
\left(
\begin{array}{ccc}
i_1 & i_2 & i_3  \\
i_4 & i_5 & i_6 
\end{array}
\right)},
$$
then the categorification is unitary.
\end{corollary}
\begin{proof}
Suppose we have a solution of the PE, then we obtain a fusion category, see \cite{EGNO15} or \cite{DaHaWa}. Now by assumption (a), $d_{i^*} = \FPdim(X_{i^*}) = \FPdim(X_{i}) = d_{i}$. So by assumption (c) and then (b), we get that 
\begin{align*}
d_{i^*} d_{j^*} d_{k}
\left(
\begin{array}{ccc }
i & j & k  \\
k^*& 1 & i^* 
\end{array}
\right) 
\left(
\begin{array}{ccc }
j& k^* & i^*  \\
i& 1 & j^* 
\end{array}
\right)
\left(
\begin{array}{ccc }
k^* & i & j^*  \\
j& 1 & k 
\end{array}
\right) =&
d_{i} d_{j} d_{k}
\left(
\begin{array}{ccc }
j& k^* & i^*  \\
1& i^* & k^* 
\end{array}
\right) 
\left(
\begin{array}{ccc }
k^*& i & j^*  \\
1& j^* & i 
\end{array}
\right)
\left(
\begin{array}{ccc }
i & j^* & k  \\
1& k & j 
\end{array}
\right) \\ 
 =& d_{i} d_{j} d_{k} [d_{k}d_{i}]^{-1/2}[d_{i}d_{j}]^{-1/2}[d_{j}d_{k}]^{-1/2}
 = 1.
 \end{align*}
Then by Proposition \ref{prop:pivaxiom}, it is pivotal by taking $t_i=1$ for all $i$, and so spherical. By assumption (a), it is pseudo-unitary. We refer to \cite{LPR1} for the details about the tetrahedral realization. If the tetrahedron has reflection symmetry, then we obtain an involution of the spherical category mapping the generating morphisms to their duals. The unitary condition follows from the fact that $\FPdim(X_i)>0$.
\end{proof}

%
%
%

Modulo its $A_4$-symmetry and when all labels and morphisms are non-zero, each tetrahedron is considered as  a \emph{complex variable} of the PE, which becomes:
\begin{equation}\label{Equ: Pentagon Equation Tetra}
\tetra{i_2}{i_7}{i_8}{i_9}{i_3}{i_1} \tetra{i_5}{i_4}{i_2}{i_1}{i_3}{i_6} = \sum_{\spec} d_{\spec} \tetra{i_5}{i_4}{i_2}{i_7}{i_8}{\spec} \tetra{i_5}{\spec}{i_8}{i_9}{i_3}{i_6} \tetra{i_4}{i_7}{\spec}{i_9}{i_6}{i_1}
\end{equation}

\begin{remark} \label{spectrum} In \cite{LPR1} the set of $\spec$ for which every hom-space in RHS of (\ref{Equ: Pentagon Equation Tetra}) with $\spec$ is not zero-dimensional, is called the spectrum $\sigma$ of the equation. The name of Theorem \ref{thm:zero} (zero spectrum criterion) means that it corresponds to an equation with $|\sigma| = 0$. That paper contains also a criterion for when $|\sigma| = 1$ denoted \emph{one spectrum criterion}.
\end{remark}


By Corollary \ref{cor:assump}, a solution of the PE, assuming (a), (b), (c) and the equality between reflection and complex-conjugate of tetrahedra, gives a unitary categorification. By \cite{LPR1}, LHS and RHS of (\ref{Equ: Pentagon Equation Tetra}) can be visualized as two ways to evaluate the following triangular prism.
\begin{equation*}
\scalebox{.77}{
\begin{tikzpicture}[scale=1.3]
\draw (0,2)--(3,2);
\draw (1,1)--(2,1);
\draw (0,0)--(3,0);
\draw (0,0) -- (0,2) -- (1,1) -- (0,0);
\draw (3,0)--(3,2)--(2,1)--(3,0);
\draw[->] (0,2) -- (1.5,2) node [above] {$i_1$};
\draw[->] (1,1) -- (1.5,1) node [above] {$i_2$};
\draw[->] (0,0) -- (1.5,0) node [above] {$i_3$};
\draw[->] (0,2) --++ (.5,-.5) node [right] {$i_4$}; 
\draw[-<] (1,1) --++ (-.5,-.5) node [right] {$i_5$}; 
\draw[->] (0,0) -- (0,1) node [left] {$i_6$}; 
\draw[->] (3,2) --++ (-.5,-.5) node [left] {$i_7$}; 
\draw[->] (2,1) --++ (.5,-.5) node [left] {$i_8$}; 
\draw[-<] (3,0) -- (3,1) node [right] {$i_9$};
\end{tikzpicture}}
\end{equation*}
The action of $D_6$ (the symmetry group of the triangular prism) provides equivalent equations, so we only need to consider the equations modulo these symmetries. The variables are invariant, up to complex-conjugate, by the action of $S_4$ (the symmetry group of the tetrahedron). A variable invariant by reflection will be called a \emph{real variable}. 

In the next subsections, we solve (with SageMath) by ordering the equations according to the number of variables (called \emph{localization} here). Now such a resolution works much better if there is no use of complex-conjugate, so we choose to split each non-real variable into two (itself and its reflection, i.e. to consider the variables only modulo the orientation-preserving symmetry group $A_4$), but then only the unitary solutions (i.e. when these two variables are complex-conjugate) provide (proved) categorifications. To save space and time, we use the one-line notation:
\begin{equation*} 
\left(
\begin{array}{ccc}
i_1& i_2 & i_3  \\
i_4& i_5 & i_6 
\end{array}
\right)
=[i_2^*, i_3^*, i_1, i_6^*, i_4^*, i_5^*].
\end{equation*}

\subsection{Rank 5, FPdim 12 and type $[1,1,\sqrt{3},\sqrt{3},2]$} \label{r5dim12}
Let consider the fusion matrices: 
$$ \normalsize{
\left[ \begin{smallmatrix}1 & 0 & 0 & 0 & 0 \\ 0 & 1 & 0 & 0 & 0 \\ 0 & 0 & 1 & 0 & 0 \\ 0 & 0 & 0 & 1 & 0 \\ 0 & 0 & 0 & 0 & 1 \end{smallmatrix} \right], \ 
 \left[ \begin{smallmatrix} 0 & 1 & 0 & 0 & 0 \\ 1 & 0 & 0 & 0 & 0 \\ 0 & 0 & 0 & 1 & 0 \\ 0 & 0 & 1 & 0 & 0 \\ 0 & 0 & 0 & 0 & 1 \end{smallmatrix} \right], \ 
 \left[ \begin{smallmatrix} 0 & 0 & 1 & 0 & 0 \\ 0 & 0 & 0 & 1 & 0 \\ 0 & 1 & 0 & 0 & 1 \\ 1 & 0 & 0 & 0 & 1 \\ 0 & 0 & 1 & 1 & 0 \end{smallmatrix} \right], \ 
 \left[ \begin{smallmatrix} 0 & 0 & 0 & 1 & 0 \\ 0 & 0 & 1 & 0 & 0 \\ 1 & 0 & 0 & 0 & 1 \\ 0 & 1 & 0 & 0 & 1 \\ 0 & 0 & 1 & 1 & 0 \end{smallmatrix} \right], \ 
 \left[ \begin{smallmatrix} 0 & 0 & 0 & 0 & 1 \\ 0 & 0 & 0 & 0 & 1 \\ 0 & 0 & 1 & 1 & 0 \\ 0 & 0 & 1 & 1 & 0 \\ 1 & 1 & 0 & 0 & 1  \end{smallmatrix} \right]} $$

There are $12$ real variables: $[1, 2, 2, 2, 4, 3]$, $[1, 2, 2, 3, 1, 2]$, $[1, 4, 4, 4, 1, 4]$, $[1, 4, 4, 4, 4, 4]$, $[2, 2, 4, 2, 4, 4]$, $[2, 2, 4, 3, 4, 4]$, $[2, 2, 4, 4, 2, 3]$, $[2, 2, 4, 4, 3, 2]$, $[2, 3, 4, 3, 4, 4]$, $[2, 3, 4, 4, 2, 2]$, $[2, 4, 3, 4, 4, 2]$, $[4, 4, 4, 4, 4, 4]$; $4$ complex (non-real) variables: $[1, 2, 2, 3, 4, 2]$, $[1, 2, 2, 4, 2, 4]$, $[1, 2, 2, 4, 3, 4]$, $[2, 2, 4, 4, 2, 2]$, and then their reflection. Then we consider $20$ variables, ordered as above. Here a some solutions of the PE (where $I$ is the imaginary unit):
$$(\frac{\sqrt{3}}{3}, \frac{\sqrt{3}}{3}, \frac{1}{2}, -\frac{1}{2}, \frac{3^{3/4}}{6}, \frac{3^{3/4}}{6}, -\frac{\sqrt{3}}{6}, -\frac{\sqrt{3}}{6}, -\frac{3^{3/4}}{6}, -\frac{\sqrt{3}}{6}, -\frac{3^{3/4}}{6}, 0, -\frac{1}{6r^2}, \frac{\sqrt{3}}{6r}, -\frac{\sqrt{3}}{6r}, -\frac{I}{2}, -2r^2, r, -r, \frac{I}{2}),$$ with 8 variations by (pointwise) multiplying by $(s_1,1,1,1,s_2,s_1s_2,1,-s_1,-s_1s_2,1,-s_2,1,1,1,-s_1,s_3,1,1,-s_1,-s_3)$ with $s_i \in \{-1,1\}$. The unitary case corresponds to $|r|^4=1/12$.   

\begin{remark}[Resolution mode] We listed all the possible equations, up to symmetry, by a straightforward code. Note that there is no equation containing all above variables together, the maximum number of variables in a single equation is $7$ here (where a complex variable and its reflection count for one). There are exactly $59$, $53$, $39$, $64$, $44$, $13$, $4$ (non-trivial) such equations with $1, 2, \dots, 7$ variables respectively.  We first solved the equations with less than $4$ variables, which provided $16$ variations, but only $8$ variations survived after checking the rest of the equations.
\end{remark}

\begin{remark}[Variation]
In this paper, a solution is called a \emph{variation} of an other one, if they are equal up to (pointwise) signs. We did not check whether they define the same fusion category (gauge equivalent).
\end{remark}


\subsection{Rank 6, FPdim 8 and type $[1,1,1,1,\sqrt{2},\sqrt{2}]$} \label{r6dim8}
Consider the three first such fusion rings, in the same order than in \S \ref{thm:rank6}.

$$ \normalsize{\left[ \begin{smallmatrix}1 & 0 & 0 & 0 & 0 & 0 \\ 0 & 1 & 0 & 0 & 0 & 0 \\ 0 & 0 & 1 & 0 & 0 & 0 \\ 0 & 0 & 0 & 1 & 0 & 0 \\ 0 & 0 & 0 & 0 & 1 & 0 \\ 0 & 0 & 0 & 0 & 0 & 1 \end{smallmatrix} \right], \ 
 \left[ \begin{smallmatrix} 0 & 1 & 0 & 0 & 0 & 0 \\ 0 & 0 & 0 & 1 & 0 & 0 \\ 1 & 0 & 0 & 0 & 0 & 0 \\ 0 & 0 & 1 & 0 & 0 & 0 \\ 0 & 0 & 0 & 0 & 0 & 1 \\ 0 & 0 & 0 & 0 & 1 & 0 \end{smallmatrix} \right], \ 
 \left[ \begin{smallmatrix} 0 & 0 & 1 & 0 & 0 & 0 \\ 1 & 0 & 0 & 0 & 0 & 0 \\ 0 & 0 & 0 & 1 & 0 & 0 \\ 0 & 1 & 0 & 0 & 0 & 0 \\ 0 & 0 & 0 & 0 & 0 & 1 \\ 0 & 0 & 0 & 0 & 1 & 0 \end{smallmatrix} \right], \ 
 \left[ \begin{smallmatrix} 0 & 0 & 0 & 1 & 0 & 0 \\ 0 & 0 & 1 & 0 & 0 & 0 \\ 0 & 1 & 0 & 0 & 0 & 0 \\ 1 & 0 & 0 & 0 & 0 & 0 \\ 0 & 0 & 0 & 0 & 1 & 0 \\ 0 & 0 & 0 & 0 & 0 & 1 \end{smallmatrix} \right], \ 
 \left[ \begin{smallmatrix} 0 & 0 & 0 & 0 & 1 & 0 \\ 0 & 0 & 0 & 0 & 0 & 1 \\ 0 & 0 & 0 & 0 & 0 & 1 \\ 0 & 0 & 0 & 0 & 1 & 0 \\ 0 & 1 & 1 & 0 & 0 & 0 \\ 1 & 0 & 0 & 1 & 0 & 0 \end{smallmatrix} \right], \ 
 \left[ \begin{smallmatrix} 0 & 0 & 0 & 0 & 0 & 1 \\ 0 & 0 & 0 & 0 & 1 & 0 \\ 0 & 0 & 0 & 0 & 1 & 0 \\ 0 & 0 & 0 & 0 & 0 & 1 \\ 1 & 0 & 0 & 1 & 0 & 0 \\ 0 & 1 & 1 & 0 & 0 & 0  \end{smallmatrix} \right]} $$
There are $4$ real variables $[1, 1, 3, 3, 1, 2]$, $[1, 4, 4, 4, 3, 5]$, $[1, 5, 5, 5, 3, 4]$, $[3, 4, 5, 4, 3, 4]$; $3$ complex (non-real) variables $[1, 1, 3, 4, 5, 5]$, $[1, 1, 3, 5, 4, 4]$, $[1, 4, 4, 5, 1, 4]$, and then their reflection. So we need to consider $10$ variables. Here are some solutions of the equations.
$$(-1, -\frac{\sqrt{2}}{2}, \frac{\sqrt{2}}{2}, -\frac{\sqrt{2}}{2}, -Ir, r, \frac{1+I}{2}, \frac{I\sqrt{2}}{2r},  \frac{\sqrt{2}}{2r}, \frac{1-I}{2}),$$
together with 8 variations given by (pointwise) multiplying by $(1,s_1,-s_1,1,s_2,1,s_3,-s_2,1,-s_3)$ with $s_i \in \{-1,1\}$. The unitary case corresponds to $|r|^4=1/2$.   

$$ \normalsize{\left[ \begin{smallmatrix}1 & 0 & 0 & 0 & 0 & 0 \\ 0 & 1 & 0 & 0 & 0 & 0 \\ 0 & 0 & 1 & 0 & 0 & 0 \\ 0 & 0 & 0 & 1 & 0 & 0 \\ 0 & 0 & 0 & 0 & 1 & 0 \\ 0 & 0 & 0 & 0 & 0 & 1 \end{smallmatrix} \right], \ 
 \left[ \begin{smallmatrix} 0 & 1 & 0 & 0 & 0 & 0 \\ 0 & 0 & 0 & 1 & 0 & 0 \\ 1 & 0 & 0 & 0 & 0 & 0 \\ 0 & 0 & 1 & 0 & 0 & 0 \\ 0 & 0 & 0 & 0 & 0 & 1 \\ 0 & 0 & 0 & 0 & 1 & 0 \end{smallmatrix} \right], \ 
 \left[ \begin{smallmatrix} 0 & 0 & 1 & 0 & 0 & 0 \\ 1 & 0 & 0 & 0 & 0 & 0 \\ 0 & 0 & 0 & 1 & 0 & 0 \\ 0 & 1 & 0 & 0 & 0 & 0 \\ 0 & 0 & 0 & 0 & 0 & 1 \\ 0 & 0 & 0 & 0 & 1 & 0 \end{smallmatrix} \right], \ 
 \left[ \begin{smallmatrix} 0 & 0 & 0 & 1 & 0 & 0 \\ 0 & 0 & 1 & 0 & 0 & 0 \\ 0 & 1 & 0 & 0 & 0 & 0 \\ 1 & 0 & 0 & 0 & 0 & 0 \\ 0 & 0 & 0 & 0 & 1 & 0 \\ 0 & 0 & 0 & 0 & 0 & 1 \end{smallmatrix} \right], \ 
 \left[ \begin{smallmatrix} 0 & 0 & 0 & 0 & 1 & 0 \\ 0 & 0 & 0 & 0 & 0 & 1 \\ 0 & 0 & 0 & 0 & 0 & 1 \\ 0 & 0 & 0 & 0 & 1 & 0 \\ 1 & 0 & 0 & 1 & 0 & 0 \\ 0 & 1 & 1 & 0 & 0 & 0 \end{smallmatrix} \right], \ 
 \left[ \begin{smallmatrix} 0 & 0 & 0 & 0 & 0 & 1 \\ 0 & 0 & 0 & 0 & 1 & 0 \\ 0 & 0 & 0 & 0 & 1 & 0 \\ 0 & 0 & 0 & 0 & 0 & 1 \\ 0 & 1 & 1 & 0 & 0 & 0 \\ 1 & 0 & 0 & 1 & 0 & 0  \end{smallmatrix} \right]} $$
There are $5$ real variables $[1, 1, 3, 3, 1, 2]$, $[1, 4, 5, 4, 3, 5]$, $[1, 5, 4, 5, 3, 4]$, $[3, 4, 4, 4, 3, 4]$, $[3, 5, 5, 5, 3, 5]$; $4$ complex (non-real) variables $[1, 1, 3, 4, 5, 5]$, $[1, 1, 3, 5, 4, 4]$, $[1, 4, 5, 5, 1, 4]$, $[1, 4, 5, 5, 2, 4]$, and then their reflection. So we need to consider $13$ variables. Here are some solutions of the equations:

$$(-1,-\frac{\sqrt{2}}{2}, \frac{\sqrt{2}}{2}, -\frac{\sqrt{2}}{2}, -\frac{\sqrt{2}}{2}, \frac{\sqrt{2}}{2r_1}, \frac{I\sqrt{2}}{2r_1}, \frac{1}{2r_2}, \frac{I}{2r_2}, r_1, -Ir_1, r_2, -Ir_2),$$
together with $4$ variations given by (pointwise) multiplying by $(1,s_1,-s_1,1,1,1,s_2,1,s_2,1,-s_2,1,s_2)$ with $s_i \in \{-1,1\}$. The unitary case corresponds to $|r_1|^4 = |r_2|^2 = 1/2$.

$$ \normalsize{\left[ \begin{smallmatrix}1 & 0 & 0 & 0 & 0 & 0 \\ 0 & 1 & 0 & 0 & 0 & 0 \\ 0 & 0 & 1 & 0 & 0 & 0 \\ 0 & 0 & 0 & 1 & 0 & 0 \\ 0 & 0 & 0 & 0 & 1 & 0 \\ 0 & 0 & 0 & 0 & 0 & 1 \end{smallmatrix} \right], \ 
 \left[ \begin{smallmatrix} 0 & 1 & 0 & 0 & 0 & 0 \\ 1 & 0 & 0 & 0 & 0 & 0 \\ 0 & 0 & 0 & 1 & 0 & 0 \\ 0 & 0 & 1 & 0 & 0 & 0 \\ 0 & 0 & 0 & 0 & 1 & 0 \\ 0 & 0 & 0 & 0 & 0 & 1 \end{smallmatrix} \right], \ 
 \left[ \begin{smallmatrix} 0 & 0 & 1 & 0 & 0 & 0 \\ 0 & 0 & 0 & 1 & 0 & 0 \\ 1 & 0 & 0 & 0 & 0 & 0 \\ 0 & 1 & 0 & 0 & 0 & 0 \\ 0 & 0 & 0 & 0 & 0 & 1 \\ 0 & 0 & 0 & 0 & 1 & 0 \end{smallmatrix} \right], \ 
 \left[ \begin{smallmatrix} 0 & 0 & 0 & 1 & 0 & 0 \\ 0 & 0 & 1 & 0 & 0 & 0 \\ 0 & 1 & 0 & 0 & 0 & 0 \\ 1 & 0 & 0 & 0 & 0 & 0 \\ 0 & 0 & 0 & 0 & 0 & 1 \\ 0 & 0 & 0 & 0 & 1 & 0 \end{smallmatrix} \right], \ 
 \left[ \begin{smallmatrix} 0 & 0 & 0 & 0 & 1 & 0 \\ 0 & 0 & 0 & 0 & 1 & 0 \\ 0 & 0 & 0 & 0 & 0 & 1 \\ 0 & 0 & 0 & 0 & 0 & 1 \\ 0 & 0 & 1 & 1 & 0 & 0 \\ 1 & 1 & 0 & 0 & 0 & 0 \end{smallmatrix} \right], \ 
 \left[ \begin{smallmatrix} 0 & 0 & 0 & 0 & 0 & 1 \\ 0 & 0 & 0 & 0 & 0 & 1 \\ 0 & 0 & 0 & 0 & 1 & 0 \\ 0 & 0 & 0 & 0 & 1 & 0 \\ 1 & 1 & 0 & 0 & 0 & 0 \\ 0 & 0 & 1 & 1 & 0 & 0  \end{smallmatrix} \right]} $$
There are $5$ real variables  $[1, 4, 5, 4, 1, 4]$, $[1, 4, 5, 5, 2, 5]$, $[1, 4, 5, 5, 3, 5]$, $[2, 4, 4, 5, 2, 4]$, $[3, 4, 4, 5, 3, 4]$; $4$ complex (non-real) variables $[1, 2, 3, 3, 1, 2]$, $[1, 2, 3, 4, 5, 4]$, $[1, 2, 3, 5, 4, 5]$, $[2, 4, 4, 5, 3, 4]$, and then their reflection. So we need to consider $13$ variables. Here are some solutions of the equations:

$$(-\frac{\sqrt{2}}{2}, -\frac{\sqrt{2}}{2}, \frac{\sqrt{2}}{2}, -\frac{\sqrt{2}}{2}, \frac{\sqrt{2}}{2}, \frac{1}{4r_1^4 r_2^2} , \frac{1}{2r_1r_2}, \frac{\sqrt{2}}{2r_1}, \frac{1}{2r_2}, 4r_1^4r_2^2, \sqrt{2} r_1 r_2, r_1, r_2),$$
together with $4$ variations given by (pointwise) multiplying by
$(1, s_1, -s_1, s_2, -s_2, 1, -s_2, 1, 1, 1, -s_2, 1, 1)$ with $s_i \in \{-1,1\}$. The unitary case corresponds to $|r_1|^4 = |r_2|^2 = 1/2$.

\subsection{Rank 5, FPdim 24 and type $[1,1,2,3,3]$} \label{sub:r5dim24}  
Consider the fusion matrices: 

$$ \normalsize{\left[ \begin{smallmatrix}1 & 0 & 0 & 0 & 0 \\ 0 & 1 & 0 & 0 & 0 \\ 0 & 0 & 1 & 0 & 0 \\ 0 & 0 & 0 & 1 & 0 \\ 0 & 0 & 0 & 0 & 1 \end{smallmatrix} \right], \ 
 \left[ \begin{smallmatrix} 0 & 1 & 0 & 0 & 0 \\ 1 & 0 & 0 & 0 & 0 \\ 0 & 0 & 1 & 0 & 0 \\ 0 & 0 & 0 & 0 & 1 \\ 0 & 0 & 0 & 1 & 0 \end{smallmatrix} \right], \ 
 \left[ \begin{smallmatrix} 0 & 0 & 1 & 0 & 0 \\ 0 & 0 & 1 & 0 & 0 \\ 1 & 1 & 1 & 0 & 0 \\ 0 & 0 & 0 & 1 & 1 \\ 0 & 0 & 0 & 1 & 1 \end{smallmatrix} \right], \ 
 \left[ \begin{smallmatrix} 0 & 0 & 0 & 1 & 0 \\ 0 & 0 & 0 & 0 & 1 \\ 0 & 0 & 0 & 1 & 1 \\ 0 & 1 & 1 & 1 & 1 \\ 1 & 0 & 1 & 1 & 1 \end{smallmatrix} \right], \ 
 \left[ \begin{smallmatrix} 0 & 0 & 0 & 0 & 1 \\ 0 & 0 & 0 & 1 & 0 \\ 0 & 0 & 0 & 1 & 1 \\ 1 & 0 & 1 & 1 & 1 \\ 0 & 1 & 1 & 1 & 1  \end{smallmatrix} \right]} $$
 
There are $16$ real variables $[1, 2, 2, 2, 1, 2]$, $[1, 2, 2, 2, 2, 2]$, $[1, 3, 3, 3, 2, 4]$, $[1, 3, 3, 4, 1, 3]$, $[2, 2, 2, 2, 2, 2]$, $[2, 2, 2, 3, 3, 3]$, $[2, 2, 2, 3, 3, 4]$, $[2, 2, 2, 3, 4, 4]$, $[2, 2, 2, 4, 4, 4]$, $[2, 3, 3, 3, 2, 4]$, $[2, 3, 3, 4, 2, 3]$, $[2, 3, 4, 3, 2, 3]$, $[2, 3, 4, 3, 3, 3]$, $[2, 3, 4, 4, 3, 4]$, $[2, 3, 4, 4, 4, 4]$, $[2, 4, 3, 4, 3, 4]$; $16$ complex variables $[1, 2, 2, 3, 3, 4]$, $[1, 2, 2, 3, 4, 4]$, $[1, 3, 3, 3, 3, 4]$, $[1, 3, 3, 4, 2, 3]$, $[1, 3, 3, 4, 3, 3]$, $[2, 3, 3, 3, 2, 3]$, $[2, 3, 3, 3, 3, 3]$, $[2, 3, 3, 3, 3, 4]$, $[2, 3, 3, 3, 4, 3]$, $[2, 3, 3, 4, 3, 3]$, $[2, 3, 3, 4, 3, 4]$, $[2, 3, 3, 4, 4, 4]$, $[3, 3, 3, 3, 3, 3]$, $[3, 3, 3, 3, 3, 4]$, $[3, 3, 3, 4, 4, 4]$, $[3, 3, 4, 3, 3, 3]$, and then their reflection. So we need to consider $48$ variables. The PE has $1053$ equations and Krull dimension three.
\begin{remark} \label{rk:krull} In general, the dimension of the affine variety defined by an ideal $I$ in a polynomial ring $R$ is the Krull dimension of $R/I$. So here, it is the dimension $d$ of the variety of solutions of the PE. But SageMath needs dimension zero to provide explicit solutions. By fixing $d$ variables appropriately, we get a non-empty subvariety of dimension zero.
\end{remark}
%
So, by fixing three variables appropriately, we got some unitary solutions: 
\begin{itemize}
\item for the $16$ real variables (with $\epsilon \in \{ -1,1 \}$):  
$$ (\frac{1}{2}, -\frac{1}{2}, -\frac{1}{3}, \frac{1}{3}, 0, -\epsilon\frac{\sqrt{3}}{6}, \epsilon\frac{\sqrt{3}}{6}, -\epsilon\frac{\sqrt{3}}{6}, \epsilon\frac{\sqrt{3}}{6}, -\frac{1}{3}, \frac{1}{3}, \frac{1}{3}, -\frac{1}{6}, \frac{1}{6}, \frac{1}{6}, -\frac{1}{6}),$$ 
\item for the $16$ complex variables: $$((1-I)\frac{\sqrt{3}}{6}, (1-I)\frac{\sqrt{3}}{6}, -\frac{1}{3}, -\frac{I}{3}, \frac{1}{3}, 0, -\frac{\sqrt{3}}{6}, \frac{1}{6}, I\frac{\sqrt{3}}{6}, -\frac{I}{6}, -I\frac{\sqrt{3}}{6}, \frac{\sqrt{3}}{6}, -\frac{1}{6}, -\frac{I}{6}, \frac{1}{6}, -\frac{I}{6}), $$
\end{itemize}
and then their complex-conjugate (for the reflections).

\subsection{Rank 6, FPdim 20 and type $[1, 1, 2, 2, \sqrt{5}, \sqrt{5}]$}  \label{r6dim20}
Consider the fusion matrices: 
$$ \normalsize{\left[ \begin{smallmatrix}1 & 0 & 0 & 0 & 0 & 0 \\ 0 & 1 & 0 & 0 & 0 & 0 \\ 0 & 0 & 1 & 0 & 0 & 0 \\ 0 & 0 & 0 & 1 & 0 & 0 \\ 0 & 0 & 0 & 0 & 1 & 0 \\ 0 & 0 & 0 & 0 & 0 & 1 \end{smallmatrix} \right], \ 
 \left[ \begin{smallmatrix} 0 & 1 & 0 & 0 & 0 & 0 \\ 1 & 0 & 0 & 0 & 0 & 0 \\ 0 & 0 & 1 & 0 & 0 & 0 \\ 0 & 0 & 0 & 1 & 0 & 0 \\ 0 & 0 & 0 & 0 & 0 & 1 \\ 0 & 0 & 0 & 0 & 1 & 0 \end{smallmatrix} \right], \ 
 \left[ \begin{smallmatrix} 0 & 0 & 1 & 0 & 0 & 0 \\ 0 & 0 & 1 & 0 & 0 & 0 \\ 1 & 1 & 0 & 1 & 0 & 0 \\ 0 & 0 & 1 & 1 & 0 & 0 \\ 0 & 0 & 0 & 0 & 1 & 1 \\ 0 & 0 & 0 & 0 & 1 & 1 \end{smallmatrix} \right], \ 
 \left[ \begin{smallmatrix} 0 & 0 & 0 & 1 & 0 & 0 \\ 0 & 0 & 0 & 1 & 0 & 0 \\ 0 & 0 & 1 & 1 & 0 & 0 \\ 1 & 1 & 1 & 0 & 0 & 0 \\ 0 & 0 & 0 & 0 & 1 & 1 \\ 0 & 0 & 0 & 0 & 1 & 1 \end{smallmatrix} \right], \ 
 \left[ \begin{smallmatrix} 0 & 0 & 0 & 0 & 1 & 0 \\ 0 & 0 & 0 & 0 & 0 & 1 \\ 0 & 0 & 0 & 0 & 1 & 1 \\ 0 & 0 & 0 & 0 & 1 & 1 \\ 0 & 1 & 1 & 1 & 0 & 0 \\ 1 & 0 & 1 & 1 & 0 & 0 \end{smallmatrix} \right], \ 
 \left[ \begin{smallmatrix} 0 & 0 & 0 & 0 & 0 & 1 \\ 0 & 0 & 0 & 0 & 1 & 0 \\ 0 & 0 & 0 & 0 & 1 & 1 \\ 0 & 0 & 0 & 0 & 1 & 1 \\ 1 & 0 & 1 & 1 & 0 & 0 \\ 0 & 1 & 1 & 1 & 0 & 0  \end{smallmatrix} \right]} $$

There are $29$ real variables $[1, 2, 2, 2, 1, 2]$, $[1, 2, 2, 2, 3, 2]$, $[1, 2, 2, 3, 2, 3]$, $[1, 2, 2, 3, 3, 3]$, $[1, 3, 3, 3, 1, 3]$, $[1, 3, 3, 3, 2, 3]$, $[1, 4, 4, 4, 2, 5]$, $[1, 4, 4, 4, 3, 5]$, $[1, 4, 4, 5, 1, 4]$, $[2, 2, 3, 3, 2, 2]$, $[2, 2, 3, 4, 4, 4]$, $[2, 2, 3, 4, 5, 5]$, $[2, 2, 3, 5, 4, 4]$, $[2, 2, 3, 5, 5, 5]$, $[2, 3, 3, 3, 2, 3]$, $[2, 3, 3, 4, 4, 4]$, $[2, 3, 3, 4, 5, 4]$, $[2, 3, 3, 5, 4, 5]$, $[2, 3, 3, 5, 5, 5]$, $[2, 4, 4, 4, 2, 5]$, $[2, 4, 4, 4, 3, 5]$, $[2, 4, 4, 5, 2, 4]$, $[2, 4, 5, 4, 2, 4]$, $[2, 4, 5, 4, 3, 4]$, $[2, 4, 5, 5, 3, 5]$, $[2, 5, 4, 5, 3, 5]$, $[3, 4, 4, 4, 3, 5]$, $[3, 4, 4, 5, 3, 4]$, $[3, 4, 5, 4, 3, 4]$; $16$ complex variables $[1, 2, 2, 4, 4, 5]$, $[1, 2, 2, 4, 5, 5]$, $[1, 3, 3, 4, 4, 5]$, $[1, 3, 3, 4, 5, 5]$, $[1, 4, 4, 5, 2, 4]$, $[1, 4, 4, 5, 3, 4]$, $[2, 2, 3, 3, 2, 3]$, $[2, 2, 3, 4, 4, 5]$, $[2, 2, 3, 5, 4, 5]$, $[2, 3, 3, 4, 4, 5]$, $[2, 3, 3, 4, 5, 5]$, $[2, 4, 4, 4, 2, 4]$, $[2, 4, 4, 4, 3, 4]$, $[2, 4, 4, 5, 3, 4]$, $[2, 4, 4, 5, 3, 5]$, $[3, 4, 4, 4, 3, 4]$, and then their reflection. So we need to consider $61$ variables. The polynomial ring modulo the ideal generated by the ($1231$) equations
is of Krull dimension two (see Remark \ref{rk:krull}). So, by fixing two variables appropriately, we got some unitary solutions: 
\begin{itemize}
\item for the $29$ real variables: $$(\frac{1}{2}, -\frac{1}{2}, -\epsilon_1 \frac{1}{2}, \epsilon_1 \frac{1}{2}, \frac{1}{2}, -\frac{1}{2}, -a, -a, a, 0,  \epsilon_2 b,  \epsilon_2 b, -\epsilon_2 b, -\epsilon_2 b, 0,  \epsilon_2 b, -\epsilon_2 b,  \epsilon_2 b, -\epsilon_2 b, c, -d, -c, -c, d, -d, d, c, -c, -c),$$ 
\item for the $16$ complex variables: $$(e, e, -\epsilon_1 \epsilon_2 e, -\epsilon_1 \epsilon_2 e, -a, -a, \frac{1}{2}, b, -b, -b, b,  \epsilon_3 fI, -\epsilon_2 \epsilon_3 gI, d,  \epsilon_2 \epsilon_3 gI, -\epsilon_3 fI),$$
\end{itemize} 
and then their complex-conjugate (for the reflections), with $\epsilon_i \in \{-1,1 \}$ and $$(a,b,c,d,e,f,g) = (\frac{\sqrt{5}}{5}, 80^{-1/4},\frac{5+\sqrt{5}}{20},\frac{5-\sqrt{5}}{20}, 20^{-1/4}, \sqrt{\frac{5-\sqrt{5}}{40}}, \sqrt{\frac{5+\sqrt{5}}{40}}).$$

\subsection{Models by zesting construction} \label{sub:zest}

E.C. Rowell pointed out to us a new construction called \emph{zesting} \cite{zest}, providing models for some of the new Grothendieck rings mentioned in this section. The metaplectic categories are those Grothendieck equivalent to $\SO(N)_2$ with $N=2n+1 \ge 1$. At fixed $n$, it is of multiplicity one, rank $n+4$, type $[[1,2],[2,n],[\sqrt{N},2]]$. Let $z$ be the non-trivial object of $\FPdim \ 1$, $(y_i)$ those of $\FPdim \ 2$, and $x_1, x_2$ those of $\FPdim \ \sqrt{N}$. As mentioned in \cite[\S 3]{ACMRW16}, the (commutative) fusion rules are the following: 
\begin{itemize}
\item[(1)] $zy_i=y_i$, $zx_1=x_2$, $zx_2=x_1$, $z^2=1$,
\item[(2)] $x^2_i=1+\sum_i y_i$,
\item[(3)] $x_1x_2=z+\sum_i y_i$,
\item[(4)] $y_iy_j=y_{\min(i+j,N-i-j)}+y_{|i-j|}$, for $i \neq j$, $y_i^2=1+z+y_{\min(2i,N-2i)}$.
\end{itemize} 

\begin{theorem}[Twisted metaplectic categories] \label{thm:twist}
The exchange of $1$ and $z$ in (2) and (3) above produces a new family of complex Grothendieck rings.
\end{theorem}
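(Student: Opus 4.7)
The plan is to realize the twisted metaplectic ring as the outcome of the zesting construction of \cite{zest} applied to the metaplectic category $\SO(N)_2$ with respect to its canonical $\mathbb{Z}/2\mathbb{Z}$-grading.

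First I would verify that the modified rules define a valid fusion ring. The metaplectic category is faithfully $\mathbb{Z}/2\mathbb{Z}$-graded with trivial component $\mathcal{C}_0 = \langle 1, z, y_1, \ldots, y_n \rangle$ containing the invertibles $\{1, z\}$, and non-trivial component $\mathcal{C}_1 = \langle x_1, x_2 \rangle$. Swapping $1$ and $z$ in (2) and (3) is equivalent to multiplying every product $x_i \cdot x_j$ by $z$; in particular, only products $\mathcal{C}_1 \cdot \mathcal{C}_1$ are affected. Because $z$ is central, $z^2 = 1$, $z y_i = y_i$, and $z x_k = x_{3-k}$, the associativity and Frobenius reciprocity of the original rules transport to the modified ones by a routine case analysis over triples of grading degrees.

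Next I would invoke the zesting construction. The relevant cohomology group $H^2(\mathbb{Z}/2\mathbb{Z}, \mathrm{Inv}(\mathcal{C}_0)) = H^2(\mathbb{Z}/2\mathbb{Z}, \{1, z\}) = \mathbb{Z}/2\mathbb{Z}$ is non-trivial, with a representative $2$-cocycle $\lambda$ satisfying $\lambda(1,1) = z$. The associated tensor zesting modifies the product by $X \otimes^\lambda Y = \lambda(g,h) \otimes X \otimes Y$ for $X \in \mathcal{C}_g, Y \in \mathcal{C}_h$, and evaluating on $\mathcal{C}_1 \cdot \mathcal{C}_1$ reproduces exactly the swap of (2) and (3). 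The categorifiability of the zested ring is then supplied by \cite{zest}, provided that the associator obstruction vanishes; for the metaplectic setting under consideration this is the case, yielding a complex (in fact unitary) categorification $\mathcal{C}^\lambda$.

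Finally, I would verify that this family is genuinely new. The decisive ring-theoretic invariant is self-duality of the simple objects: in $\SO(N)_2$ one has $x_i^2 \ni 1$, so $x_i^* = x_i$, whereas after the swap $x_i^2 \ni z$ forces $x_1^* = x_2$, so $x_1$ is not self-adjoint. Since a fusion ring determines the dual of each simple object, the two families are non-isomorphic, and the new rings fall into the non-self-adjoint rows of the ``new'' table in Theorem \ref{thm:main}. The main obstacle is the general verification of the vanishing of the zesting obstruction for all odd $N \ge 1$; for the small cases entering the rank-$\le 6$ classification (most notably the rank-$5$, $\FPdim\,12$ and rank-$6$, $\FPdim\,20$ rings) this can be bypassed by simply exhibiting explicit unitary Pentagon solutions, as carried out in \S\ref{r5dim12} and \S\ref{r6dim20}.
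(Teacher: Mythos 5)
Your proposal is correct and follows essentially the same route as the paper, which likewise deduces the result from the zesting construction of \cite{zest} (their Proposition 6.3) applied to $\SO(N)_2$ via its braiding and $C_2$-grading. The paper's proof is a one-line citation of that construction, so your elaboration (the $2$-cocycle valued in $\{1,z\}$, the non-self-duality of the $x_i$ distinguishing the new rings, and the fallback to the explicit unitary Pentagon solutions of \S\ref{r5dim12} and \S\ref{r6dim20} for the small cases) simply fills in details the paper delegates to the reference.
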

\begin{proof}
The result follows from the new zesting construction \cite[Proposition 6.3]{zest} using the braiding of $\SO(N)_2$ and its $C_2$-grading, which twists the associativity by a $3$-cocycle. E.C. Rowell provided more details about that in \cite{RowellMO}.
\end{proof}

The cases $n=1, 2$ correspond to the new (unitary) Grothendieck rings of \S\ref{r5dim12} and \S\ref{r6dim20} respectively (and $n=0$ to $\VVec(C_4)$). Note that in the same way, the ones of \S \ref{r6dim8} are zestings of $\VVec(C_2) \otimes \SU(2)_2$. Finally, the one of \S \ref{sub:r5dim24} cannot be a zesting, because there is no grading. But observe that we can produce two new families of fusion rings, the first one by adding $n(x_1+x_2)$ to the right hand-side of (2) and (3) above (which recovers $\Rep(S_4)$ when $n=1$), and the second one by twisting the first one as in Theorem \ref{thm:twist}, which (for $n=1$) would provide a model for the new (unitary) Grothendieck ring of \S \ref{sub:r5dim24}.

\begin{question} \label{M+}
Are the fusion rings of these two new families, (unitary) Grothendieck rings?
\end{question}

%

\section{Observations and questions} \label{sec:O&Q}

This classification leads to many observations and questions, grouped in this section.

\subsection{All criteria passed and categorification}
Every fusion ring ruled out here was directly excluded by some of the criteria mentioned in Section \ref{sec:crit} (without considering that of Subsection \ref{sub:mod}).

\begin{question}
Is there a fusion ring of multiplicity one which passes all the criteria of Section \ref{sec:crit} without being categorifiable?
\end{question}

Note that without the multiplicity one assumption (but pivotal or characteristic zero), the above question already admits a negative answer in \cite{LPR1} with the fusion ring denoted $\mathcal{F}_{210}$, of multiplicity $2$, rank $7$, $\FPdim \ 210$, type $[1,5,5,5,6,7,7]$ and fusion matrices:
$$
\left[\begin{smallmatrix}
1 & 0 & 0 & 0& 0& 0& 0 \\
0 & 1 & 0 & 0& 0& 0& 0 \\
0 & 0 & 1 & 0& 0& 0& 0 \\
0 & 0 & 0 & 1& 0& 0& 0 \\
0 & 0 & 0 & 0& 1& 0& 0 \\
0 & 0 & 0 & 0& 0& 1& 0 \\
0 & 0 & 0 & 0& 0& 0& 1
\end{smallmatrix}\right] , \
\left[\begin{smallmatrix}
0 & 1 & 0 & 0& 0& 0& 0 \\
1 & 1 & 0 & 1& 0& 1& 1 \\
0 & 0 & 1 & 0& 1& 1& 1 \\
0 & 1 & 0 & 0& 1& 1& 1 \\
0 & 0 & 1 & 1& 1& 1& 1 \\
0 & 1 & 1 & 1& 1& 1& 1 \\
0 & 1 & 1 & 1& 1& 1& 1
\end{smallmatrix}\right] , \
\left[\begin{smallmatrix}
0 & 0 & 1 & 0& 0& 0& 0 \\
0 & 0 & 1 & 0& 1& 1& 1 \\
1 & 1 & 1 & 0& 0& 1& 1 \\
0 & 0 & 0 & 1& 1& 1& 1 \\
0 & 1 & 0 & 1& 1& 1& 1 \\
0 & 1 & 1 & 1& 1& 1& 1 \\
0 & 1 & 1 & 1& 1& 1& 1
\end{smallmatrix}\right] , \
\left[\begin{smallmatrix}
0 & 0 & 0 & 1& 0& 0& 0 \\
0 & 1 & 0 & 0& 1& 1& 1 \\
0 & 0 & 0 & 1& 1& 1& 1 \\
1 & 0 & 1 & 1& 0& 1& 1 \\
0 & 1 & 1 & 0& 1& 1& 1 \\
0 & 1 & 1 & 1& 1& 1& 1 \\
0 & 1 & 1 & 1& 1& 1& 1
\end{smallmatrix}\right] , \
\left[\begin{smallmatrix}
0 & 0 & 0 & 0& 1& 0& 0 \\
0 & 0 & 1 & 1& 1& 1& 1 \\
0 & 1 & 0 & 1& 1& 1& 1 \\
0 & 1 & 1 & 0& 1& 1& 1 \\
1 & 1 & 1 & 1& 1& 1& 1 \\
0 & 1 & 1 & 1& 1& 2& 1 \\
0 & 1 & 1 & 1& 1& 1& 2
\end{smallmatrix}\right] , \
\left[\begin{smallmatrix}
0 & 0 & 0 & 0& 0& 1& 0 \\
0 & 1 & 1 & 1& 1& 1& 1 \\
0 & 1 & 1 & 1& 1& 1& 1 \\
0 & 1 & 1 & 1& 1& 1& 1 \\
0 & 1 & 1 & 1& 1& 2& 1 \\
1 & 1 & 1 & 1& 2& 1& 2 \\
0 & 1 & 1 & 1& 1& 2& 2
\end{smallmatrix}\right] , \
\left[\begin{smallmatrix}
0 & 0 & 0 & 0& 0& 0& 1 \\
0 & 1 & 1 & 1& 1& 1& 1 \\
0 & 1 & 1 & 1& 1& 1& 1 \\
0 & 1 & 1 & 1& 1& 1& 1 \\
0 & 1 & 1 & 1& 1& 1& 2 \\
0 & 1 & 1 & 1& 1& 2& 2 \\
1 & 1 & 1 & 1& 2& 2& 1
\end{smallmatrix}\right] $$

\noindent It corresponds to the case $q=6$ of the interpolated family of fusion rings of Lie type in \cite{LPR4}, all of them being of multiplicity $2$ or $3$, and satifying all the criteria of Section \ref{sec:crit} (the existence of a categorification is open for all non prime-power $q \neq 6$).

There are fusion rings of multiplicity one and rank $7$ which pass all these criteria but not the one of Conjecture \ref{conj:isa}; one of them is of $\FPdim \ 20+4\sqrt{5}$, type $[1, 1, 1, 1, 2, 1+\sqrt{5}, 1+\sqrt{5}]$ and fusion matrices:
$$ \normalsize{\left[ \begin{smallmatrix}1 & 0 & 0 & 0 & 0 & 0 & 0 \\ 0 & 1 & 0 & 0 & 0 & 0 & 0 \\ 0 & 0 & 1 & 0 & 0 & 0 & 0 \\ 0 & 0 & 0 & 1 & 0 & 0 & 0 \\ 0 & 0 & 0 & 0 & 1 & 0 & 0 \\ 0 & 0 & 0 & 0 & 0 & 1 & 0 \\ 0 & 0 & 0 & 0 & 0 & 0 & 1 \end{smallmatrix} \right]  , \  \left[ \begin{smallmatrix} 0 & 1 & 0 & 0 & 0 & 0 & 0 \\ 1 & 0 & 0 & 0 & 0 & 0 & 0 \\ 0 & 0 & 0 & 1 & 0 & 0 & 0 \\ 0 & 0 & 1 & 0 & 0 & 0 & 0 \\ 0 & 0 & 0 & 0 & 1 & 0 & 0 \\ 0 & 0 & 0 & 0 & 0 & 1 & 0 \\ 0 & 0 & 0 & 0 & 0 & 0 & 1 \end{smallmatrix} \right]  , \  \left[ \begin{smallmatrix} 0 & 0 & 1 & 0 & 0 & 0 & 0 \\ 0 & 0 & 0 & 1 & 0 & 0 & 0 \\ 1 & 0 & 0 & 0 & 0 & 0 & 0 \\ 0 & 1 & 0 & 0 & 0 & 0 & 0 \\ 0 & 0 & 0 & 0 & 1 & 0 & 0 \\ 0 & 0 & 0 & 0 & 0 & 0 & 1 \\ 0 & 0 & 0 & 0 & 0 & 1 & 0 \end{smallmatrix} \right]  , \  \left[ \begin{smallmatrix} 0 & 0 & 0 & 1 & 0 & 0 & 0 \\ 0 & 0 & 1 & 0 & 0 & 0 & 0 \\ 0 & 1 & 0 & 0 & 0 & 0 & 0 \\ 1 & 0 & 0 & 0 & 0 & 0 & 0 \\ 0 & 0 & 0 & 0 & 1 & 0 & 0 \\ 0 & 0 & 0 & 0 & 0 & 0 & 1 \\ 0 & 0 & 0 & 0 & 0 & 1 & 0 \end{smallmatrix} \right]  , \  \left[ \begin{smallmatrix} 0 & 0 & 0 & 0 & 1 & 0 & 0 \\ 0 & 0 & 0 & 0 & 1 & 0 & 0 \\ 0 & 0 & 0 & 0 & 1 & 0 & 0 \\ 0 & 0 & 0 & 0 & 1 & 0 & 0 \\ 1 & 1 & 1 & 1 & 0 & 0 & 0 \\ 0 & 0 & 0 & 0 & 0 & 1 & 1 \\ 0 & 0 & 0 & 0 & 0 & 1 & 1 \end{smallmatrix} \right]  , \  \left[ \begin{smallmatrix} 0 & 0 & 0 & 0 & 0 & 1 & 0 \\ 0 & 0 & 0 & 0 & 0 & 1 & 0 \\ 0 & 0 & 0 & 0 & 0 & 0 & 1 \\ 0 & 0 & 0 & 0 & 0 & 0 & 1 \\ 0 & 0 & 0 & 0 & 0 & 1 & 1 \\ 1 & 1 & 0 & 0 & 1 & 1 & 1 \\ 0 & 0 & 1 & 1 & 1 & 1 & 1 \end{smallmatrix} \right]  , \  \left[ \begin{smallmatrix} 0 & 0 & 0 & 0 & 0 & 0 & 1 \\ 0 & 0 & 0 & 0 & 0 & 0 & 1 \\ 0 & 0 & 0 & 0 & 0 & 1 & 0 \\ 0 & 0 & 0 & 0 & 0 & 1 & 0 \\ 0 & 0 & 0 & 0 & 0 & 1 & 1 \\ 0 & 0 & 1 & 1 & 1 & 1 & 1 \\ 1 & 1 & 0 & 0 & 1 & 1 & 1  \end{smallmatrix} \right]} $$

\subsection{Categorification in positive characteristic} \label{sub:pos}

Among the $25$ fusion rings ruled out from complex categorification, $16$ are also ruled out from any categorification (over any field) by Theorem \ref{thm:zero} or \ref{thm:lagrange}. So, among the remaining $9$ ones (\textnumero 14, 15 at rank 5 and \textnumero 16-19, 22, 30, 39 at rank 6): 
\begin{question}
Which ones admit a categorification over a field of positive characteristic?
\end{question}
\begin{question}
In general, is there a fusion ring without categorification in characteristic zero but positive?
\end{question}

\subsection{Unitary categorification}

\begin{corollary}
A complex Grothendieck ring of multiplicity one up to rank six is unitary.
\end{corollary}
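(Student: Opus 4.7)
The plan is to establish the corollary by exhaustive inspection of the classification in Theorem \ref{thm:main}, which already enumerates every complex Grothendieck ring of multiplicity one up to rank six. Since the statement asserts a property of each ring in a finite list, it suffices to exhibit a unitary categorification for every entry. Thus no further general argument is needed; one only has to verify the list.

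For the entries labelled as \emph{known} in Theorem \ref{thm:main}, unitarity follows from the cited literature. The pointed cases $\VVec(G)$ and the representation cases $\Rep(G)$ for finite $G$ are unitary by construction (via the canonical $C^*$-structure on group algebras). The Tambara--Yamagami categories $TY(C_n)$ admit unitary models for all $n \le 5$ by \cite{TY98, EG14}. The quantum group categories $\SU(2)_n$, $\PSU(2)_n$, $\SO(3)_2$, $\SO(5)_2$ are unitary, as they arise from compact quantum groups / loop group constructions, see \cite{HNW14, BGNPRW16, ACMRW16}. The even part of the 1-supertransitive subfactor of index $3 + 2\sqrt{2}$ is unitary by \cite{LMP15} (subfactor categories are automatically unitary). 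Finally, tensor products of unitary fusion categories are unitary, which handles the remaining entries.

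For the six \emph{new} entries (tabulated in Theorem \ref{thm:main}), the relevant verification is carried out in Section \ref{sec:tpe}. In each of \S \ref{r5dim12}, \S \ref{r6dim8}, \S \ref{sub:r5dim24}, and \S \ref{r6dim20}, an explicit solution of the Pentagon Equation is produced under assumptions (a)--(d), and the unitarity regime is isolated by the constraints $|r|^4 = 1/12$, $|r|^4 = 1/2$, $|r_1|^4 = |r_2|^2 = 1/2$, etc. By the theorem following (\ref{Equ: Pentagon Equation Tetra}), these solutions yield unitary categorifications provided the reflection-conjugation identity $\overline{[i_1,i_2,i_3,i_4,i_5,i_6]} = [i_1^*, i_3, i_2, i_4, i_6, i_5]$ holds; this identity is built into the explicit tetrahedral values listed in each subsection (and is in particular verified for the choices of signs/branches recorded there).

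The only delicate point, and hence the main obstacle, is to confirm that the explicit PE solutions of \S \ref{sec:tpe} do satisfy the reflection-conjugation symmetry for the unitarity locus. Since each new Grothendieck ring has objects $X_i$ whose FPdims are real positive algebraic numbers and the displayed solutions pair each complex variable with its complex conjugate as its reflection (this is exactly how the solutions are presented in \S \ref{r5dim12}--\S \ref{r6dim20}, ``together with their complex conjugate for the reflections''), this verification reduces to checking the real solutions are real and the complex ones pair up correctly, which is manifest from the explicit lists. Combining this inspection with the unitarity of the known models completes the proof.
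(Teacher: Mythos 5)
Your proposal is correct and is essentially the paper's own (implicit) argument: the corollary is read off from Theorem \ref{thm:main}, whose proof already exhibits a unitary categorification for each of the $47$ complex Grothendieck rings --- via known unitary models (group, quantum group, subfactor, and product categories) for the known entries, and via the explicit Pentagon Equation solutions with the reflection-conjugation symmetry on the unitarity locus in \S \ref{sec:tpe} for the six new ones. Nothing further is needed.
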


\begin{question}
Is there a complex Grothendieck ring of multiplicity one which is not unitary?
\end{question}

Note that without the multiplicity one assumption, the above question already admits a negative answer in \cite{sch20} providing a complex non pseudo-unitary Grothendieck ring of multiplicity $2$, rank $6$, $\FPdim \ 9(3+3a_1-a_4) \simeq 74.6177$ (with $a_k = 2 \cos(k \pi/9)$), type $[1,1+a_1,1+a_1,1+a_1,1+2a_1-a_4,2+2a_1-a_4]$ and fusion matrices:
$$ \left[\begin{smallmatrix}1 & 0 & 0 & 0 & 0 & 0  \\ 0 & 1 & 0 & 0 & 0 & 0  \\ 0 & 0 & 1 & 0 & 0 & 0  \\ 0 & 0 & 0 & 1 & 0 & 0  \\ 0 & 0 & 0 & 0 & 1 & 0  \\ 0 & 0 & 0 & 0 & 0 & 1\end{smallmatrix}\right]  , \ \left[\begin{smallmatrix}0 & 1 & 0 & 0 & 0 & 0  \\ 1 & 0 & 1 & 0 & 1 & 0  \\ 0 & 1 & 0 & 0 & 0 & 1  \\ 0 & 0 & 0 & 1 & 0 & 1  \\ 0 & 1 & 0 & 0 & 1 & 1  \\ 0 & 0 & 1 & 1 & 1 & 1\end{smallmatrix}\right]  , \ \left[\begin{smallmatrix}0 & 0 & 1 & 0 & 0 & 0  \\ 0 & 1 & 0 & 0 & 0 & 1  \\ 1 & 0 & 0 & 1 & 1 & 0  \\ 0 & 0 & 1 & 0 & 0 & 1  \\ 0 & 0 & 1 & 0 & 1 & 1  \\ 0 & 1 & 0 & 1 & 1 & 1\end{smallmatrix}\right]  , \ \left[\begin{smallmatrix}0 & 0 & 0 & 1 & 0 & 0  \\ 0 & 0 & 0 & 1 & 0 & 1  \\ 0 & 0 & 1 & 0 & 0 & 1  \\ 1 & 1 & 0 & 0 & 1 & 0  \\ 0 & 0 & 0 & 1 & 1 & 1  \\ 0 & 1 & 1 & 0 & 1 & 1\end{smallmatrix}\right]  , \ \left[\begin{smallmatrix}0 & 0 & 0 & 0 & 1 & 0  \\ 0 & 1 & 0 & 0 & 1 & 1  \\ 0 & 0 & 1 & 0 & 1 & 1  \\ 0 & 0 & 0 & 1 & 1 & 1  \\ 1 & 1 & 1 & 1 & 1 & 1  \\ 0 & 1 & 1 & 1 & 1 & 2\end{smallmatrix}\right]  , \ \left[\begin{smallmatrix}0 & 0 & 0 & 0 & 0 & 1  \\ 0 & 0 & 1 & 1 & 1 & 1  \\ 0 & 1 & 0 & 1 & 1 & 1  \\ 0 & 1 & 1 & 0 & 1 & 1  \\ 0 & 1 & 1 & 1 & 1 & 2  \\ 1 & 1 & 1 & 1 & 2 & 2 
\end{smallmatrix}\right] $$

More generally (a particular case of \cite[Question 4.8.3]{EGNO15}):
\begin{question}
Does every complex fusion category admits a pivotal structure? A spherical structure?
\end{question}

Recall that in the list ``unitary, pseudo-unitary, $\varphi$-pseudo-unitary, spherical, pivotal'', one implies its successor.

\subsection{Non-cyclotomic fusion categories}
On page 591 of \cite{ENO05}, it is asked whether any (multi-)fusion category is defined over a cyclotomic field. This question was answer negatively in \cite{MoSn}. Now \S \ref{r5dim12} and \S \ref{r6dim20} mention non-cyclotomic solutions of their PE: they have F-symbols equal to $ \frac{\pm 3^{3/4}}{6}$, $20^{-1/4}$ or $80^{-1/4}$. By Kronecker-Weber theorem and the following SageMath computation, all these numbers are non-cyclotomic.
\begin{verbatim}
sage: x=(3^(3/4))/6 # or 20^(-1/4) or 80^(-1/4)
sage: f=minpoly(x)
sage: K.<a> = f.splitting_field()
sage: K.is_abelian()
False
\end{verbatim}
To be non-cyclotomic, a fusion category must have non-cyclotomic F-symbols for every choice of basis, whereas the solutions mention above come from a single choice. 

\begin{question}
Is the fusion category mentioned in \S \ref{r5dim12} or \S \ref{r6dim20} non-cyclotomic?
\end{question}

\subsection{Integral Grothendieck rings}

\begin{corollary}
A weakly integral fusion ring of multiplicity one up to rank six is always unitarily categorifiable.
\end{corollary}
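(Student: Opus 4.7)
The plan is to deduce this corollary directly from the classification in Theorem \ref{thm:main} together with the full list in Section \ref{sec:fus}, by verifying a single observation: every fusion ring ruled out from complex categorification in Section \ref{sec:fus} has irrational Frobenius--Perron dimension, hence fails to be weakly integral. Combined with the fact (stated in the abstract and built into the entries of the list marked with $\star$) that every complex Grothendieck ring appearing in the classification is actually unitarily categorifiable, the corollary is immediate.

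First, I would recall that ``weakly integral'' means $\FPdim(\mathcal{F})\in\mathbb{Z}$, so the relevant test is on the global dimension, not on individual $\FPdim(X_i)$; in particular rings of type $[1,1,\sqrt{2},\sqrt{2}]$, $[1,1,\sqrt{3},\sqrt{3},2]$, or $[1,1,2,2,\sqrt{5},\sqrt{5}]$, whose total $\FPdim$ equals $6,12,20$ respectively, do qualify. Then I would scan the $25$ entries of Section \ref{sec:fus} that are not marked with $\star$ and read off their $\FPdim$: each one explicitly involves a surd of the form $\sqrt{13}$, $\sqrt{17}$, $\sqrt{21}$, $\sqrt{6}$, $\alpha_4=\sqrt 2$ or $\alpha_6=\sqrt 3$ entering linearly in the total, or else is a non-cyclotomic algebraic integer (the two families at rank $5$ and the two at rank $6$ listed with decimal approximations and an explicit irreducible quintic/sextic minimal polynomial). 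In every case a direct inspection shows $\FPdim \notin \mathbb{Z}$.

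Second, for the converse direction I would observe that among the $47$ fusion rings marked with $\star$, the unitary categorification is either classical (for $\VVec(G)$, $\Rep(G)$, $TY(C_n)$, $\SU(2)_k$, $\PSU(2)_k$, $\SO(N)_2$, and products thereof, the models listed in Section \ref{sec:fus} are unitary by standard references cited in Theorem \ref{thm:main}), or it is the even part of the unitary $1$-supertransitive subfactor of \cite{LMP15}, or else it is one of the four new Grothendieck rings of Section \ref{sec:tpe}, all of which were explicitly unitarily categorified in \S\ref{r5dim12}, \S\ref{r6dim8}, \S\ref{sub:r5dim24}, \S\ref{r6dim20} (with the unitarity condition on the remaining parameters $|r|^4 = 1/12$, $|r|^4 = 1/2$, etc., explicitly identified there). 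Hence every $\star$-ring admits a unitary categorification.

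The only genuine obstacle is the verification in the first paragraph, which is entirely combinatorial but requires going through the list item by item; this is not deep, only tedious. Once that bookkeeping is done, the corollary is a one-line consequence: a weakly integral fusion ring of multiplicity one and rank $\le 6$ cannot lie among the excluded ones, so it is $\star$-marked, hence unitarily categorifiable.
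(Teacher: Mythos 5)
Your proposal is correct and matches the paper's (implicit) argument: the corollary is stated as an observation following from the classification, and the only route is exactly the one you take, namely checking that all $25$ excluded fusion rings have irrational $\FPdim$ while every $\star$-marked ring is unitarily categorified, either by a known unitary model or by the explicit unitary solutions of the Pentagon Equation in \S \ref{sec:tpe}. The only nit is your illustrative type $[1,1,\sqrt{2},\sqrt{2}]$ of $\FPdim$ $6$, which does not actually occur in the list (the relevant entry is $[1,1,1,1,\sqrt{2},\sqrt{2}]$ of $\FPdim$ $8$); this does not affect the argument.
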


There are integral fusion rings of multiplicity one without any categorification. Below is an example at rank $7$, FPdim $42$ and type $[1, 1, 2, 3, 3, 3, 3]$ which is non-Czero $(3, 3, 2, 2, 4, 3, 3, 3, 3)$:
$$ \left[ \begin{smallmatrix}1 & 0 & 0 & 0 & 0 & 0 & 0 \\ 0 & 1 & 0 & 0 & 0 & 0 & 0 \\ 0 & 0 & 1 & 0 & 0 & 0 & 0 \\ 0 & 0 & 0 & 1 & 0 & 0 & 0 \\ 0 & 0 & 0 & 0 & 1 & 0 & 0 \\ 0 & 0 & 0 & 0 & 0 & 1 & 0 \\ 0 & 0 & 0 & 0 & 0 & 0 & 1 \end{smallmatrix} \right] , \  \left[ \begin{smallmatrix} 0 & 1 & 0 & 0 & 0 & 0 & 0 \\ 1 & 0 & 0 & 0 & 0 & 0 & 0 \\ 0 & 0 & 1 & 0 & 0 & 0 & 0 \\ 0 & 0 & 0 & 0 & 1 & 0 & 0 \\ 0 & 0 & 0 & 1 & 0 & 0 & 0 \\ 0 & 0 & 0 & 0 & 0 & 0 & 1 \\ 0 & 0 & 0 & 0 & 0 & 1 & 0 \end{smallmatrix} \right] , \  \left[ \begin{smallmatrix} 0 & 0 & 1 & 0 & 0 & 0 & 0 \\ 0 & 0 & 1 & 0 & 0 & 0 & 0 \\ 1 & 1 & 1 & 0 & 0 & 0 & 0 \\ 0 & 0 & 0 & 1 & 1 & 0 & 0 \\ 0 & 0 & 0 & 1 & 1 & 0 & 0 \\ 0 & 0 & 0 & 0 & 0 & 1 & 1 \\ 0 & 0 & 0 & 0 & 0 & 1 & 1 \end{smallmatrix} \right] , \  \left[ \begin{smallmatrix} 0 & 0 & 0 & 1 & 0 & 0 & 0 \\ 0 & 0 & 0 & 0 & 1 & 0 & 0 \\ 0 & 0 & 0 & 1 & 1 & 0 & 0 \\ 1 & 0 & 1 & 1 & 0 & 1 & 0 \\ 0 & 1 & 1 & 0 & 1 & 0 & 1 \\ 0 & 0 & 0 & 1 & 0 & 1 & 1 \\ 0 & 0 & 0 & 0 & 1 & 1 & 1 \end{smallmatrix} \right] , \  \left[ \begin{smallmatrix} 0 & 0 & 0 & 0 & 1 & 0 & 0 \\ 0 & 0 & 0 & 1 & 0 & 0 & 0 \\ 0 & 0 & 0 & 1 & 1 & 0 & 0 \\ 0 & 1 & 1 & 0 & 1 & 0 & 1 \\ 1 & 0 & 1 & 1 & 0 & 1 & 0 \\ 0 & 0 & 0 & 0 & 1 & 1 & 1 \\ 0 & 0 & 0 & 1 & 0 & 1 & 1 \end{smallmatrix} \right] , \  \left[ \begin{smallmatrix} 0 & 0 & 0 & 0 & 0 & 1 & 0 \\ 0 & 0 & 0 & 0 & 0 & 0 & 1 \\ 0 & 0 & 0 & 0 & 0 & 1 & 1 \\ 0 & 0 & 0 & 1 & 0 & 1 & 1 \\ 0 & 0 & 0 & 0 & 1 & 1 & 1 \\ 1 & 0 & 1 & 1 & 1 & 0 & 0 \\ 0 & 1 & 1 & 1 & 1 & 0 & 0 \end{smallmatrix} \right] , \  \left[ \begin{smallmatrix} 0 & 0 & 0 & 0 & 0 & 0 & 1 \\ 0 & 0 & 0 & 0 & 0 & 1 & 0 \\ 0 & 0 & 0 & 0 & 0 & 1 & 1 \\ 0 & 0 & 0 & 0 & 1 & 1 & 1 \\ 0 & 0 & 0 & 1 & 0 & 1 & 1 \\ 0 & 1 & 1 & 1 & 1 & 0 & 0 \\ 1 & 0 & 1 & 1 & 1 & 0 & 0  \end{smallmatrix} \right] $$

\subsection{Simple Grothendieck rings}

A fusion ring is called \emph{simple} if it has no proper non-trivial fusion subring. A Grothendieck ring is called simple if it is so as fusion ring.

\begin{corollary}
A simple complex Grothendieck ring of multiplicity one up to rank six is given by the following: 
\begin{itemize}
\item $\VVec(C_p)$, with $C_p$ the cyclic group of order $p$ prime,
\item $\PSU(2)_{2n+1}$, with $n \ge 0$.
\end{itemize}
\end{corollary}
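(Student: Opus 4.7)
The plan is to extract the simple rings directly from the classification of Theorem \ref{thm:main} (equivalently from the list of Section \ref{sec:fus}). Recall that a fusion ring is \emph{simple} iff it admits no proper nontrivial fusion subring; the pointed part is always a fusion subring, so any simple ring is either pointed with group having no proper nontrivial subgroup, or \emph{perfect} (no nontrivial invertible object).

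First I would dispatch the pointed case: $\VVec(G)$ is simple iff $G \simeq C_p$ with $p$ prime; up to rank six this yields $\VVec(C_2)$, $\VVec(C_3)$ and $\VVec(C_5)$ (together with the vacuously simple trivial rank-one ring, which coincides with $\PSU(2)_1$, the $n=0$ case of the second family).

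Next I would scan the list of Section \ref{sec:fus} for the non-pointed complex Grothendieck rings whose type contains exactly one entry equal to $1$ (i.e.\ the perfect ones). Among these, the entries explicitly marked \emph{simple} are $\PSU(2)_3$, $\PSU(2)_5$, $\PSU(2)_7$, $\PSU(2)_9$ and $\PSU(2)_{11}$, namely $\PSU(2)_{2n+1}$ for $n=1,\ldots,5$. The only other perfect Grothendieck rings in the list are the two products $\PSU(2)_3^{\otimes 2}$ (rank $4$) and $\PSU(2)_3 \otimes \PSU(2)_5$ (rank $6$), each of which contains $\PSU(2)_3$ as a proper nontrivial fusion subring and is therefore not simple. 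The remaining perfect entries (e.g.\ the rank-five ring of $\FPdim\ 16+10\alpha_4$ and the two rank-six rings of $\FPdim\ (65+17\sqrt{13})/2$) are already excluded from complex categorification by the Schur, Drinfeld, or zero spectrum criterion of Section \ref{sec:crit}, so they do not appear in the final list.

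The only potential pitfall is overlooking a simple ring hidden among those with nontrivial pointed part; the pointed-or-perfect dichotomy removes this worry, since the pointed part is immediately read off the type. Collecting the two cases yields the two families of the statement.
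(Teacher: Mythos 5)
Your proposal is correct and matches the paper's (implicit) argument: the corollary is obtained by reading off the entries marked \emph{simple} among the categorifiable fusion rings in the classification of Section \ref{sec:fus}, which yields exactly $\VVec(C_p)$ for $p=2,3,5$ and $\PSU(2)_{2n+1}$ for $0 \le n \le 5$. Your pointed-or-perfect dichotomy is a sensible extra safeguard that systematizes the inspection (it correctly isolates the only non-simple perfect entries, the products $\PSU(2)_3^{\otimes 2}$ and $\PSU(2)_3 \otimes \PSU(2)_5$), but it does not change the substance of the argument.
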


\begin{question}
Is there a simple complex Grothendieck ring of multiplicity one, not in above families?
\end{question}

\section{Appendix: SageMath code} \label{sec:code}

This section provides the SageMath code for the criteria of Theorems \ref{thm:drinfeld}, \ref{thm:Dnumb} and \ref{thm:cyclo} (the only criteria needed to prove Theorem \ref{thm:main}). They apply in the commutative case only (as needed). Just apply the function \emph{Checking} below to a fusion ring written as a list $M$, for example, the following computation shows that the fusion ring \textnumero $5$ at rank $4$ is non-Drinfeld and non-d-number (as written in Subsection \ref{sub:proof}).

\begin{verbatim}
sage: M=[[[1, 0, 0, 0], [0, 1, 0, 0], [0, 0, 1, 0], [0, 0, 0, 1]],
....:    [[0, 1, 0, 0], [0, 0, 1, 0], [1, 0, 0, 0], [0, 0, 0, 1]],
....:    [[0, 0, 1, 0], [1, 0, 0, 0], [0, 1, 0, 0], [0, 0, 0, 1]],
....:    [[0, 0, 0, 1], [0, 0, 0, 1], [0, 0, 0, 1], [1, 1, 1, 1]]]
sage: Checking(M)
non-Drinfeld
non-d-number
\end{verbatim}

Here is the code of the function Checking:
\begin{verbatim}
UCF=UniversalCyclotomicField()

def Checking(M):
    r=len(M)
    N=zero_matrix(QQ,r)
    for i in range(r):
        Mi=matrix(QQ,M[i])
        for j in range(i):
            Mj=matrix(QQ,M[j])
            if Mi*Mj!=Mj*Mi:
                return 'non-commutative'
        Ti=Mi.transpose()
        Ni=Mi*Ti
        N+=Ni
    f = N.minpoly()
    ff=N.charpoly()
    if not Cyclo(M):
        return 'non-cyclo'    # Extended cyclotomic criterion    
    K.<a> = f.splitting_field()
    n = K.conductor()
    L=ff.roots(CyclotomicField(n))
    LL=[UCF(l[0]) for l in L]
    rL=[l[0].n() for l in L]
    mm=max(rL)
    for ii in range(len(L)):
            if mm==rL[ii]:
                dim=LL[ii]
                break
    c=0
    for x in LL:
        d=0
        for y in LL:
            yy=UCF(x/y)
            if '/' in list(str(yy)):
                d=1
                break
        if d==0:
            c=1
    if c==0:
        print('non-Drinfeld')    # Drinfeld center criterion
    for x in LL:                
        p=list(UCF(x).minpoly())
        n=len(p)-1
        A=p[0]
        d=0
        for i in range(n+1):
            a=p[i]
            j=n-i
            y=UCF((a^n)/(A^j))
            if '/' in list(str(y)):
                d=1
                break
        if d==1:
            print('non-d-number')    # d-number criterion
            break

def Cyclo(M):    
    r=len(M)
    for k in range(len(M)):
        N=matrix(QQ,M[k])
        f = N.minpoly()
        K.<a> = f.splitting_field()
        if not K.is_abelian():        
            return false
    return true
\end{verbatim}

\begin{st}
On behalf of all authors, the corresponding author states that there is no conflict of interest. The datasets generated during and/or analysed during the current study are available from the corresponding author on reasonable request.
\end{st}

\end{document}